\renewcommand{\theequation}{\thesection\arabic{equation}}
\newtheorem{prop}{Proposition}
\newtheorem{definition}{Definition} 
\newtheorem{lemma}{Lemma}
\newtheorem{ex}{Example}
\newtheorem{rem}{rem}
\newcommand{\defeq}{\vcentcolon=}
\def\thx{{\hat\theta_X}}
\def\thy{{\hat\theta_Y}}
\def\thz{{\hat\theta_Z}}
\newcolumntype{N}{>{\centering\arraybackslash}m{.5in}}
\newcolumntype{G}{>{\centering\arraybackslash}m{2in}}
\newcommand{\EE}{E}
\newcommand{\ZZ}{\mathbb{Z}}
\newcommand{\indep}{\rotatebox[origin=c]{90}{$\models$}}
\newcommand{\iid}{\overset{\text{i.i.d.}}{\sim}}
\newcommand{\ep}{\epsilon}
\newcommand{\RR}{\mathbb R}
\newcommand{\twid}{\widetilde}
\newcommand{\la}{\lambda}
\DeclareMathOperator{\var}{Var}
\newcommand{\mscr}[1]{\mathcal #1}
\newcommand{\ul}{\underline}
\begin{document}

%%%%%%%%%%%%%%%%%%%%%%%%%%%%%%%%%%%%%%%%%%%%%%%%%%%%%%%%%%%%%%%%%%%%%%%%%%%%%%%%%%%%%%%%%%%%%%%%%%%%%%%%%%%%%%%%%%%%%%%%%%%%
%%%%%%%%%%%%%%%%%%%%%%%%%%%%%%%%%%%%%%%%%%%%%%%%%%%%%%%%%%%%%%%%%%%%%%%%%%%%%%%%%%%%%%%%%%%%%%%%%%%%%%%%%%%%%%%%%%%%%%%%%%%%

\renewcommand{\baselinestretch}{2}

\markright{ \hbox{\footnotesize\rm 
%{\footnotesize\bf 24} (201?), 000-000
}\hfill\\[-13pt]
\hbox{\footnotesize\rm
%\href{http://dx.doi.org/10.5705/ss.20??.???}{doi:http://dx.doi.org/10.5705/ss.20??.???}
}\hfill }

\markboth{\hfill{\footnotesize\rm Jordan Awan and Zhanrui Cai} \hfill}
{\hfill {\footnotesize\rm One Step to Efficient Synthetic Data} \hfill}

\renewcommand{\thefootnote}{}
$\ $\par

%%%%%%%%%%%%%%%%%%%%%%%%%%%%%%%%%%%%%%%%%%%%%%%%%%%%%%%%%%%%%%%%%%%%%%%%%%%%%%%%%%%%%%%%%%%%%%%%%%%%%%%%%%%%%%%%%%%%%%%%%%%%

\fontsize{12}{14pt plus.8pt minus .6pt}\selectfont \vspace{0.8pc}
\centerline{\large\bf One Step to Efficient Synthetic Data }
%\vspace{2pt} 
%\centerline{\large\bf HERE IF A SECOND LINE IS NEEDED}
\vspace{.4cm} 
\centerline{Jordan Awan$^{*}$\footnote{Corresponding author: Jordan Awan, Department of Statistics, Purdue University, 150 N. University St., West Lafayette, IN 47907, USA. Email: jawan@purdue.edu}, Zhanrui Cai$^{\dagger}$} 
\vspace{.4cm} 
\centerline{\it Purdue University$^{*}$, The University of Hong Kong$^{\dagger}$}
 \vspace{.55cm} \fontsize{9}{11.5pt plus.8pt minus.6pt}\selectfont

%%%%%%%%%%%%%%%%%%%%%%%%%%%%%%%%%%%%%%%%%%%%%%%%%%%%%%%%%%%%%%%%%%%%%%%%%%%%%%%%%%%%%%%%%%%%%%%%%%%%%%%%%%%%%%%%%%%%%%%%%%%%

\begin{quotation}
\noindent {\it Abstract:}

     A common approach to synthetic data is to sample from a fitted model. We show that under general assumptions, this approach results in a sample with inefficient estimators, and the joint distribution of the sample is inconsistent with the true distribution. Motivated by this, we propose a general method of producing synthetic data that is widely applicable for parametric models,  has asymptotically efficient summary statistics, and is easily implemented and highly computationally efficient. 	 Our approach allows for the construction of both partially synthetic datasets, which preserve certain summary statistics, as well as fully synthetic data, which satisfy differential privacy. In the case of continuous random variables, we prove that our method preserves the efficient estimator with asymptotically negligible error and show through simulations that this property holds for discrete distributions as well. We also provide theoretical and empirical evidence that the distribution from our procedure converges to the true distribution. Besides our focus on synthetic data, our procedure can also be used to perform hypothesis tests in the presence of intractable likelihood functions.

\vspace{9pt}
\noindent {\it Key words and phrases:}
 indirect inference, parametric bootstrap, simulation-based inference, statistical disclosure control, differential privacy
\par
\end{quotation}\par

\def\thefigure{\arabic{figure}}
\def\thetable{\arabic{table}}

\renewcommand{\theequation}{\thesection.\arabic{equation}}

\fontsize{12}{14pt plus.8pt minus .6pt}\selectfont

%\tableofcontents

\section{Introduction}\label{s:intro}
 
	With advances in modern technology, the government and other research agencies can collect massive amounts of data from individual respondents. These data are valuable for scientific progress and policy research, but they also come with increased privacy risk \citep{lane2014privacy}. Numerous methods of generating \emph{synthetic data} have been proposed to publish useful information while preserving the confidentiality of sensitive information. For a survey, we refer interested readers to \citet[Chapter 3]{hundepool2012statistical}. The goal of synthetic data is to produce a new dataset that preserves the distributional properties of the original dataset, while protecting the privacy of the participating individuals. There are two main types of synthetic data: \emph{partially synthetic data}, which allows for certain statistics or attributes to be released without privacy while protecting the other aspects of the data, and \emph{fully synthetic data}, where all statistics and attributes of the data are protected.
	
	There has also been an increased interest in developing synthetic data with formal privacy guarantees, such as \emph{differential privacy} (DP). 	Differential privacy (DP) was proposed in \citet{dwork2006calibrating} as a framework to develop formally private methods. Methods that satisfy DP require the introduction of additional randomness beyond sampling to obscure the effect of one individual on the output. Intuitively, DP ensures plausible deniability for those participating in the dataset. %As the literature on DP has developed, there are now many privacy tools to preserve sample statistics with asymptotically negligible noise (e.g., \citet{smith2011privacy},\citet{reimherr2019kng}), and produce DP synthetic data (see related work).

		A common approach to synthetic data is that of	\citet{liew1985data}, which proposes to 
		draw synthetic data from a fitted model, { which we also refer to as the parametric bootstrap.} 
		This forms the basis of the multiple imputation method of synthetic data generation \citep{rubin1993statistical,raghunathan2003multiple,drechsler2011synthetic,jiang2021balancing}. 
		%generating synthetic data by first fitting a model to the confidential data and then randomly drawing from the estimated density function. 
	%	While \citet{liew1985data} do not incorporate formal privacy methods, 
	Another approach to synthetic data samples from a \emph{conditional distribution}, preserving certain statistics. The most fundamental perspective of this approach is that of \citet{muralidhar2003theoretical}, who proposes drawing confidential variables from the distribution conditional on the non-confidential variables.

	\subsection{Our contributions}
	{ Related work on} synthetic data largely fits into one of two categories: 1) sampling from a fitted distribution or 2) sampling from a distribution conditional on sample statistics. Our first result, Theorem \ref{thm:TVnaive} shows that in very general settings, the first approach results in a sample with inefficient estimators, and whose distribution is ``inconsistent.'' In particular, we show that the joint distribution of the synthetic sample does not converge in total variation to the true joint distribution as the sample size increases. This result gives a strong indication that {the parametric bootstrap} is not ideal for synthetic data generation. On the other hand, sampling conditional on certain sample statistics can overcome these issues.

	However, there are important limitations to the previous works which sample from a conditional distribution. First, the previous approaches tend to be highly specific to the model at hand and require different techniques for different models. Second, many of the approaches are difficult to implement and computationally expensive, involving complex iterative sampling schemes such as MCMC.

The approach we propose in this paper preserves summary statistics, but unlike previous methods it is applicable to a wide variety of parametric models, easily implemented, and highly computationally efficient. 	Our approach allows for the construction of both partially synthetic datasets, which preserve the summary statistics without formal privacy methods, as well as fully synthetic data which satisfy the strong guarantee of differential privacy (DP). 

{  Our contributions are summarized as follows: }

	 { 
	\begin{itemize}
	    \item We prove that the parametric bootstrap results in inconsistent synthetic data with inefficient estimators.
	    \item We propose a novel method, ``one-step synthetic data,'' which adds one extra step to the parametric bootstrap. Our approach is easily applied as the computations only require efficient estimators for the parameters and the ability to sample the model, and the computational time is proportional to simply fitting the model. 
	    \item We prove that under regularity conditions, our synthetic data procedure preserves an efficient estimator with an asymptotically negligible error. We call this ``efficient synthetic data,'' as its estimators are also efficient.
	    \item We prove that when conditioning on an efficient estimator, the distributions still converge even if the  parameters differ by $O(n^{-1/2})$. We argue that our method is an approximation to a conditional distribution and this gives evidence that the one-step synthetic data asymptotically has the same distribution as the original dataset. 
	    \item We investigate the performance of our procedure in several simulation studies, confirming the theoretical results, and offering numerical evidence that our assumptions can likely be weakened.
	\end{itemize}}
	
	%{ Do the organization}
	\subsection{Organization}
 The rest of the paper is organized as follows: In Section \ref{s:background}, we review some terminology and set the notation for the paper. In Section \ref{s:bootstrap}, we prove the limitations of synthetic data generated by the parametric bootstrap, showing that the distribution is inconsistent and has inefficient summary statistics. We propose our one-step approach to synthetic data in Section \ref{s:asymptotics} and in the case of continuous random variables, prove that it results in a sample which preserves an efficient estimator with asymptotically negligible error. In Section \ref{s:distribution}, we consider the distribution of the one-step synthetic sample, and prove that a related procedure results in a consistent sample, giving evidence that the one-step procedure itself is consistent. In Section \ref{s:simulations} we perform several simulation studies illustrating 1) the { efficiency of the one-step synthetic data estimators}, even in the case of discrete distributions, 2) the distributional properties of the approach, 3) that the approach can give high quality DP synthetic { data}, and 4) the one-step synthetic data can perform accurate and powerful hypothesis tests on models with intractible likelihood functions. %, and 5) the improved running time of our approach compared to a competing MCMC approach to synthetic data. 
	We end in Section \ref{s:conclusions} with some discussion. Proofs and technical details are postponed to Section S3 of the Supplementary Materials.

 \subsection{Related work} %\todo{shorten}
The approach of sampling from a fitted model is  often used to produce differentially private synthetic data. 
	\citet{hall2013differential} develop DP tools for kernel density estimators, which can be sampled to produce DP synthetic data. 
	\citet{machanavajjhala2008privacy} develop a synthetic data method based on a multinomial model, which satisfies a modified version of DP to accommodate sparse spatial data.	
	\citet{mcclure2012differential} sample from the posterior predictive distribution to produce DP synthetic data, which is asymptotically similar to the \citet{liew1985data}. \citet{ju2022data} provide a general Metropolis-within-Gibbs algorithm that can sample from the posterior predictive distribution, given DP summary statistics, for a wide variety of models and privacy mechanisms. 
	\citet{liu2016model} also use a Bayesian framework: first, they produce DP estimates of the Bayesian sufficient statistics, draw the parameter from the distribution conditional on the DP statistics, and finally sample synthetic data conditional on the sampled parameter.	
	\citet{zhang2017privbayes} propose a method of developing high-dimensional DP synthetic data which draws from a fitted model based on differentially private marginals.  
	{ While not for the purpose of generating synthetic data, \citet{ferrando2022parametric} proposed using the parametric bootstrap to do statistical inference on model parameters, given privatized statistics.}

	\citet{burridge2003information,mateo2004fast,ting2005romm} generate partially synthetic data, preserving the mean and covariance for normally distributed variables. 
	%This approach was extended to a computationally efficient version by \citet{mateo2004fast}, and  \citet{ting2005romm} give an alternative approach to preserving the mean and covariance by using random orthogonal matrix multiplication. 
	There are also tools, often based on algebraic statistics, to sample conditional distributions preserving certain statistics for contingency tables \citep{karwa2013conditional,chen2006sequential,slavkovic2010synthetic}.% \citet{karwa2013conditional} give a survey of Markov Chain Monte Carlo (MCMC) techniques to sample conditional distributions. Another approach is sequential importance sampling, proposed by \citet{chen2006sequential}. 
	%\citet{slavkovic2010synthetic} use these techniques to generate synthetic contingency tables that preserve conditional frequencies. 
	
	While not focused on the problem of synthetic data, there are other notable works sampling from conditional distributions. A series of works sample from distributions conditional on sufficient statistics focused on the application of hypothesis testing \citep{lindqvist2005monte,lindqvist2007conditional,lindqvist2013exact,lillegard1999exact,engen1997stochastic,lillegard1999exact,taraldsen2018conditional}. \citet{barber2020testing} recently gave a method of sampling conditional on an efficient statistic using the principle of asymptotic sufficiency; they showed that their method results in asymptotically valid $p$-values for certain hypothesis tests.

	In differential privacy, there are also synthetic data methods which preserve sample statistics.  
	\citet{karwa2012differentially} generate DP synthetic networks from the beta exponential random graph model, conditional on the degree sequence. 	
	\citet{li2018privacy} produce DP high dimensional synthetic contingency tables using a modified Gibbs sampler.
	\citet{hardt2012simple} give a distribution-free algorithm to produce a DP synthetic dataset, which approximately preserves several linear statistics.
	
	%Finally, an example of DP synthetic data that does not clearly fit in one of the above categories is that of \citet{snoke2018pmse}, which uses the exponential mechanism along with the pmse to produce DP synthetic data. 
	
	%\citet{sarathy2002perturbing} implemented a copula approach to generate synthetic data, which can estimate monotone relationships between variables. They assume that the marginal distributions are known and estimate monotone relationships between variables.
	
	While this paper is focused on producing synthetic data for parametric models, there are several non-parametric methods of producing synthetic data, using tools such as regression trees \citep{reiter2005using,drechsler2008accounting}, random forests \citep{caiola2010random}, bagging and support vector machines \citep{drechsler2011empirical}. Recently there has been a success in producing differentially privacy synthetic data using generative adversarial neural networks \citep{xie2018differentially,jordon2018pate,triastcyn2018generating,xu2019ganobfuscator,harder2021dp}. { We also mention a hardness result due to \citet{ullman2020pcps}, which establishes that there is no polynomial time algorithm which can approximately preserve all two-way margins of binary data; our focus on parametric models side-steps this issue.}

	%%%%%%%%%%%%%%%%%%%%%%%%%%%%%%%%%%%%%%%%%%%%%%%%%%%%%%%%%%%%%%%%%%%%%%%%%%%%
	%%%   BACKGROUND
	%%%%%%%%%%%%%%%%%%%%%%%%%%%%%%%%%%%%%%%%%%%%%%%%%%%%%%%%%%%%%%%%%%%%%%%%%%%%%
	\section{Background and Notation}\label{s:background}
	In this section, we review some background and notation that we use throughout the paper.  %	Throughout this paper, we generally use capital letters to denote random variables and lower case letters to denote  particular values. 
	
	For a parametric random variable, we write $X\sim f_\theta$ to indicate that $X$ has probability density function (pdf) $f_\theta$. 
%	we will assume the following setup: %Let $(\Omega, \mscr F, P)$ be a probability space of the \emph{seed} $\omega$. Let $X_\theta:\Omega \rightarrow \mscr X$ be a measurable function, where $(\mscr X,\mscr G)$ is a measurable space and $\theta \in \Theta \subset \RR^p$. We assume that there exists a measure $\mu$ on $(\mscr X,\mscr G)$ which dominates $PX^{-1}_{\theta}$ for all $\theta \in \Theta$. Then there exists  densities $f_\theta:\mscr X \rightarrow \RR^{\geq 0}$ such that $\int_A dPX^{-1}_\theta = \int_A f_\theta d\mu$ for all $A\in \mscr G$.
%	We refer to $f_\theta$ as the probability density function (pdf), and will write $X\sim f_\theta$ to indicate that $X$ is equal in distribution to $X_\theta(\omega)$ for $\omega\sim P$. 
To indicate that a sequence of random variables from the model $f_\theta$ are independent and identically distributed (i.i.d.), we  write $X_1,\ldots, X_n \iid f_\theta$, and denote $f_\theta^n$ as the joint pdf of $(X_1,\ldots, X_n)$. We use all of the following notations interchangeably: $\ul X =(X_i)_{i=1}^n= (X_1,\ldots, X_n)^{\top}$. We write $\RR^{d\times n}$ to denote the set of all $n$-tuples of elements from $\RR^d$.%

	Our notation for convergence of random variables follows that of \citet{van2000asymptotic}. 
	Let $X$  be a random vector, $X_n$ be a  sequence of random vectors, and $r_n$ be a positive numerical sequence. We write $X_n \overset d \rightarrow X$ to denote that $X_n$ \emph{converges in distribution to} $X$. We write $X_n = o_p(r_n)$	to denote that $X_n/r_n \overset d \rightarrow 0$. We write  $X_n = O_p(r_n)$ to denote that $X_n/r_n$ is \emph{bounded in probability}.
	
	%For multivariate derivatives, we will overload the $\frac{d}{d\theta}$ operator as follows. For a function $f:\RR^p\rightarrow \RR$, we write $\frac{d}{d\theta} f(\theta)$ to denote the $p\times 1$ vector of partial derivatives $(\frac{\partial}{\partial \theta_j} f(\theta))_{j=1}^p$. For a function $g:\RR^p\rightarrow \RR^q$, we write $\frac{d}{d\theta} g(\theta)$ to denote the $p\times q$ matrix $(\frac{\partial}{\partial \theta_j} g_k(\theta))_{j,k=1}^{p,q}$.
	
	For $X\sim f_\theta$, we denote the \emph{score function} as $S(\theta,x) = \nabla_\theta \log f_\theta(x)$, and the \emph{Fisher information} as $I(\theta) = \EE_\theta\left\{S(\theta,X) S^\top(\theta,X)\right\}$, { where $\EE_\theta$ denotes the expectation over the random variable $X$ when $\theta$ is the true parameter}.  An estimator $\hat \theta: \RR^{d\times n} \rightarrow \Theta$ is \emph{efficient} for $\theta$ if for $X_1,\ldots, X_n \iid f_\theta$, we have $\sqrt n \{\hat \theta(\ul X) - \theta\} \overset d \rightarrow N\{0, I^{-1}(\theta)\}$. As shorthand, we will often write $\thx$ in place of $\hat \theta(\ul X)$.

\section{Limitations of the Parametric Bootstrap for Synthetic Data}\label{s:bootstrap}
Sampling from a fitted model, also known as the parametric bootstrap, is a common approach to synthetic data.  
%The parametric bootstrap is a popular alternative to co-sufficient sampling, which has been popularly used both for synthetic data as well as for hypothesis testing.
However, the parametric bootstrap has considerable weaknesses when used  to generate synthetic data, in that it results in significantly worse approximations to the true sampling distribution. In this section, we prove that the parametric bootstrap gives inefficient sample statistics and results in ``inconsistent'' synthetic data, where we show that the total variation distance between the true distribution and the parametric bootstrap approximation does not go to zero as $n\rightarrow \infty$. %We also discuss other limitations of the parametric bootstrap for hypothesis testing. 

The ideal goal of synthetic data is to produce a new dataset $\ul Y$ which is approximately equal in distribution to $\ul X$, where the approximation is measured by total variation distance, $\mathrm{TV}(\ul X,\ul Y)$. At a minimum, we may expect that the distribution of $\ul Y$ approaches the distribution of $\ul X$ as the sample size $n$ grows. We begin with an example that shows that the parametric bootstrap results in suboptimal asymptotics, calling into question the appropriateness of the parametric bootstrap for the generation of synthetic data. 
	
	\begin{ex}\label{s:normal}
		Suppose that $X_1\ldots, X_n \iid N(\mu,1)$. We use the estimator $\hat \mu(\ul X)= n^{-1} \sum_{i=1}^n X_i$ and draw $Z_1,\ldots, Z_n|\hat\mu(\ul X) \iid N\{\hat \mu(\ul X),1\}$. 
		We can compute $\var\{\hat \mu(\ul X)\} = n^{-1} $, whereas $\var\{\hat \mu(\ul Z)\} = 2n^{-1}$. 
		By using the synthetic data $\ul Z$, we have lost half of the effective sample size. We can also derive $(Z_1,\ldots, Z_n)^\top \sim N(\mu \ul 1, I_n + n^{-1} \ul 1\, \ul 1^\top)$, where $\ul 1 = (1,\ldots, 1)^\top$ is a vector of length $n$. Using the formula for the Hellinger distance between Gaussian distributions and Sylvester's Determinant Theorem \citep[p271]{pozrikidis2014introduction}, it is easily calculated that the Hellinger distance between the distributions of $\ul X$ and $\ul Z$ is  $h(\ul X,\ul Z) = [\frac 12 \int\{\sqrt{f_{\ul x}(t)}-\sqrt{f_{\ul z}(t)}\}^{2} \ dt]^{1/2}=\{1- 2^{3/4}(3)^{-1/2}\}^{1/2} \geq .17$, regardless of the sample size $n$,  where $f_{\ul x}$ and $f_{\ul z}$ represent the marginal distributions of the samples $\ul X$ and $\ul Z$ respectively. 
		{ Recall that $h^2(\ul X,\ul Z)\leq \mathrm{TV}(\ul X,\ul Z)$ indicating that the marginal distributions of $\ul X$ and $\ul Z$ do not converge in total variation distance.} %This demonstrates that the samples $\ul X$ and $\ul Z$ are fundamentally different, and that their distributions do not approach each other as $n$ grows.
	\end{ex}
	
	 Example \ref{s:normal} is in fact one instance of a very general phenomenon. In Theorem \ref{thm:TVnaive}, we show that when $\hat \theta(\cdot)$ is an efficient estimator, then $\thz$ is an inefficient estimator for $\theta$, and the distribution of $\ul Z$ is ``inconsistent'' in that the distributions of $\ul Z$ and $\ul X$ do not converge in total variation, as $n\rightarrow \infty$. 

  For the formal statement of Theorem \ref{thm:TVnaive}, we assume that the efficient estimators are locally asymptotically efficient, a property that \citet{beran1997diagnosing} showed is sufficient to ensure that the parametric bootstrap converges correctly. 
  \begin{definition}[Local Asymptotic Equivariance]
    For $\theta\in \Theta\subset \RR^d$, let $X_1,\ldots, X_n\iid f_{\theta}$, and let $\hat\theta(\ul X)$ be an estimator for $\theta$. Call $H_n(\theta)$ be the distribution of $\sqrt n(\hat\theta(\ul X)-\theta)$, which we assume converges  to $H(\theta)$ as $n\rightarrow \infty$. We say that $\hat \theta$ is a \emph{locally asymptotically equivariant (LAE)} estimator at $\theta_0\in \Theta$ if for every convergent sequence $h_n\rightarrow h$, we have $H_n(\theta_0+h_n/\sqrt n)\rightarrow H(\theta_0)$. 
\end{definition}

%\begin{thm}\label{thm:TVnaive}
\begin{restatable}{thm}{thmTVnaive}\label{thm:TVnaive}
Let $X_1,\ldots, X_n\iid f_{\theta_0}$. Suppose that $\hat\theta(\ul X)$ is an efficient estimator, which is LAE at $\theta_0$. Sample $Z_1,\ldots, Z_n |\hat\theta(\ul X) \iid f_{\hat\theta(\ul X)}$. Then
\begin{enumerate}
    \item $\sqrt n \{\hat\theta(\ul Z)-\theta_0\} \overset d \rightarrow N\{0, 2I^{-1}(\theta_0)\}$, whereas $\sqrt n \{\hat\theta(\ul X)-\theta_0\} \overset d \rightarrow N\{0, I^{-1}(\theta_0)\}$, and 
    \item $\mathrm{TV}\{(X_1,\ldots, X_n),(Z_1,\ldots, Z_n)\}$ does not converge to zero as $n\rightarrow \infty$.
\end{enumerate}
\end{restatable}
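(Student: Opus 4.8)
The plan is to establish Part 1 first and then deduce Part 2 from it via a data-processing argument. For Part 1, I would condition on $\hat\theta(\ul X)$ and view $\ul Z$ as a parametric bootstrap sample. Write
\[
\sqrt n\{\hat\theta(\ul Z)-\theta\} \;=\; \sqrt n\{\hat\theta(\ul Z)-\hat\theta(\ul X)\} \;+\; \sqrt n\{\hat\theta(\ul X)-\theta\}.
\]
The second term converges to $N\{0,I^{-1}(\theta)\}$ by the assumed efficiency of $\hat\theta$. For the first term, note that conditionally on $\hat\theta(\ul X)=t$ the variables $Z_1,\ldots,Z_n$ are i.i.d.\ $f_t$, so applying the efficiency assumption at parameter value $t$ gives $\sqrt n\{\hat\theta(\ul Z)-t\}\overset d\rightarrow N\{0,I^{-1}(t)\}$; combining this with the consistency $\hat\theta(\ul X)\overset p\rightarrow\theta$ and continuity of $t\mapsto I^{-1}(t)$, a dominated-convergence argument on conditional characteristic functions upgrades it to: conditionally on $\hat\theta(\ul X)$, $\sqrt n\{\hat\theta(\ul Z)-\hat\theta(\ul X)\}\overset d\rightarrow N\{0,I^{-1}(\theta)\}$, in probability. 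Because this conditional limit does not depend on the conditioning value, the two terms in the display are asymptotically independent, so their sum converges to the convolution $N\{0,2I^{-1}(\theta)\}$. This is precisely first-order consistency of the parametric bootstrap for an efficient estimator, so I would invoke the same regularity conditions (a locally uniform version of the asymptotic normality of $\hat\theta$, and smoothness of $\theta\mapsto f_\theta$) that underlie the CLT for the MLE.

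For Part 2, I would use the data-processing inequality for total variation together with Part 1. Since $\hat\theta(\cdot)$ followed by the affine bijection $u\mapsto\sqrt n(u-\theta)$ is a fixed measurable map of the sample,
\[
\mathrm{TV}\{(X_i)_{i=1}^n,(Z_i)_{i=1}^n\} \;\ge\; \mathrm{TV}\big(\sqrt n\{\hat\theta(\ul X)-\theta\},\,\sqrt n\{\hat\theta(\ul Z)-\theta\}\big).
\]
By Part 1 the two arguments on the right converge weakly to $N\{0,I^{-1}(\theta)\}$ and $N\{0,2I^{-1}(\theta)\}$ respectively. Total variation is lower semicontinuous under weak convergence: for each fixed continuous bounded $\varphi$ we have $\int\varphi\,d(P_n-Q_n)\to\int\varphi\,d(P-Q)$, and $\mathrm{TV}$ is a supremum of such functionals over $\|\varphi\|_\infty\le 1$, so taking $\liminf$ yields
\[
\liminf_{n\to\infty}\mathrm{TV}\{(X_i)_{i=1}^n,(Z_i)_{i=1}^n\} \;\ge\; \mathrm{TV}\big(N\{0,I^{-1}(\theta)\},\,N\{0,2I^{-1}(\theta)\}\big) \;>\;0,
\]
the strict positivity holding because $I(\theta)$ is positive definite whenever an efficient estimator exists, so the two Gaussians are distinct. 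Hence the total variation distance cannot converge to zero.

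The main obstacle is the conditional CLT for $\hat\theta(\ul Z)$ in Part 1: transferring the pointwise-in-$t$ efficiency statement to one that holds conditionally on the random, data-dependent center $\hat\theta(\ul X)$, and extracting the asymptotic independence of $\sqrt n\{\hat\theta(\ul Z)-\hat\theta(\ul X)\}$ and $\sqrt n\{\hat\theta(\ul X)-\theta\}$. I would address this exactly as one proves consistency of the parametric bootstrap — through a locally uniform form of the asymptotic normality of $\hat\theta$ plus continuity of the Fisher information — and list these as the regularity hypotheses. Everything else is routine: Part 2 needs only the data-processing inequality and the lower semicontinuity (not continuity) of total variation under weak limits.
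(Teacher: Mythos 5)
Your proposal is correct, and Part 1 follows essentially the paper's route: the paper also decomposes $\sqrt n\{\hat\theta(\ul Z)-\theta\}$ into a conditional fluctuation plus $\sqrt n\{\hat\theta(\ul X)-\theta\}$, invokes the conditional CLT for the bootstrap sample with Slutsky/continuity of $I^{-1}$, and combines the two pieces (the paper via a law-of-total-covariance zero-correlation argument plus asymptotic unbiasedness $\EE\{\hat\theta(\ul Z)\mid\hat\theta(\ul X)\}=\hat\theta(\ul X)+o_p(n^{-1/2})$, you via the observation that the conditional limit law does not depend on the conditioning value, which is if anything the cleaner justification of the convolution; your explicit flagging of the locally uniform CLT is a hypothesis the paper's proof also uses implicitly). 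Part 2 is where you genuinely diverge. Both arguments begin with the data-processing inequality to pass from the full samples to the rescaled estimators, but the paper then lower-bounds total variation by the Kolmogorov--Smirnov distance over rectangles, applies two triangle inequalities together with the equivalence of convergence in distribution and KS-convergence, and finally lower-bounds $\mathrm{KS}\bigl[N\{0,I^{-1}(\theta)\},N\{0,2I^{-1}(\theta)\}\bigr]$ explicitly via a growing family of rectangles, obtaining the quantitative bound $\Phi\{-\sqrt{\log 4}/\sqrt 2\}-\Phi\{-\sqrt{\log 4}\}\geq .083$, so that $\mathrm{TV}(\ul X,\ul Z)\geq .083+o(1)$. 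You instead invoke lower semicontinuity of total variation under weak convergence, which is valid (for each continuous $\varphi$ with $\lVert\varphi\rVert_\infty\leq 1$ one has $2\,\mathrm{TV}(P_n,Q_n)\geq\int\varphi\,d(P_n-Q_n)\to\int\varphi\,d(P-Q)$, and the dual representation of TV over continuous functions is only needed for the Gaussian limits, where it holds by regularity). Your route is shorter and avoids the KS machinery and the rectangle computation, but it yields only the qualitative conclusion that the liminf is positive; the paper's argument buys an explicit numerical lower bound on the asymptotic total variation gap, which it uses rhetorically (the hypothesis-testing interpretation of the non-vanishing distinguishability).
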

The use of the total variation distance in Theorem \ref{thm:TVnaive} also has a hypothesis testing interpretation: if the parameter $\theta_0$ were known, then given either the sample $(X_1,\ldots, X_n)$ or $(Z_1,\ldots, Z_n)$, we can always construct a test to discern between the two distributions with power greater than its type I error. So, the samples $\ul X$ and $\ul Z$ never become indistinguishable. In summary, Theorem \ref{thm:TVnaive} shows that the parametric bootstrap is not ideal for the generation of synthetic data. 

\begin{rem}
{ In part, the behavior established in Theorem \ref{thm:TVnaive} is because the synthetic data set is of the same size as the original dataset. We argue that this is an important restriction because if a synthetic dataset is published, users will want to run their own analyses on the synthetic dataset with the assumption that they would get similar results on the original dataset. Modifying the sample size could substantially affect things like confidence interval width and significance in these use cases. For example, if a much larger dataset were generated, then $\thz$ would be closer to $\thx$. However, other aspects of the data would behave differently due to the increased sample size, likely giving artificially narrow confidence intervals. On the other hand, if a much smaller synthetic dataset were generated, its distribution would be closer to the original sampling distribution, but confidence intervals will be too wide, which may prevent users from finding significant features. }
    
    %\todo{JA: removed some HT}
    \begin{comment}
    The parametric bootstrap also has limitations when applied to hypothesis testing problems. Indeed, \citet{robins2000asymptotic} show that if the test statistic $T(X)$ is asymptotically normal with mean $a(\theta)$, then the parametric bootstrap $p$-values are asymptotically conservative if $a(\theta)$ is not a constant in $\theta$; this results in tests with low power. On the other hand, if $a(\theta)$ is constant in $\theta$, then the $p$-values are asymptotically uniform under the null.  %See \citet{boos2003introduction} for an intuitive discussion of this phenomenon. 
%
For complex models, it may be difficult to construct a test statistic with the necessary properties to ensure that the $p$-values are appropriately calibrated. On the other hand, \citet{barber2020testing} showed that using approximate co-sufficient sampling methods, similar to those in this paper, guarantee asymptotically uniform $p$-values under the null hypothesis, no matter which test statistic is used. In Section \ref{s:distribution}, we give evidence that our method also results in approximate co-sufficient samples, and demonstrate in Section \ref{s:DPtesting} that samples from our method have more accurate $p$-values and higher power compared to the parametric bootstrap.
\end{comment}
{ 
%\begin{rem}
In the synthetic data literature, the problem illustrated in Theorem \ref{thm:TVnaive} is often addressed by releasing multiple synthetic datasets and using \emph{combining rules} to account for the increased variability due to the synthetic data generation procedure \citep{raghunathan2003multiple,reiter2007multiple,reiter2002satisfying}. However, it still remains that the synthetic data do not follow the same distribution as the original dataset, and the combining rules are often designed for only specific statistics. Furthermore, in the case of differentially private synthetic data, it has been shown that traditional combining rules do not give valid inference, making the problem more complicated \citep{charest2011can}. }
\end{rem}

	%%%%%%%%%%%%%%%%%%%%%%%%%%%%%%%%%%%%%%%%%%%%%%%%%%%%%%%%%%%%%%%%%%%%%%%%%%%%
	%%%   ASYMPTOTICS
	%%%%%%%%%%%%%%%%%%%%%%%%%%%%%%%%%%%%%%%%%%%%%%%%%%%%%%%%%%%%%%%%%%%%%%%%%%%%%
	\section{One-Step Solution to Synthetic Data}\label{s:asymptotics}
    In this section, we present our synthetic data procedure and show that it has efficient estimators in Theorem \ref{thm:onestep}. We also include a pseudo-code version of our approach in Algorithm \ref{alg:onestep}, to aid implementation.

    While sampling from a fitted model is a common approach to synthetic data, we saw in Theorem \ref{thm:TVnaive} that it results in inferior asymptotics of both the sample estimates as well as the joint distribution of the synthetic data. Our approach avoids the asymptotic problem of Theorem \ref{thm:TVnaive} by producing a sample $(Y_i)_{i=1}^n$ such that $\thy = \thx + o_p(n^{-1/2})$, as proved in Theorem \ref{thm:onestep}. Then marginally, the asymptotic distributions of $\thy$ and $\thx$ are identical. 
    
    { Our method is based around the ability to use the same random ``seed'' at different parameter values. Intuitively, the seed is the source of randomness used to generate the data and is independent of the model parameters. In Example 4, we see that for real-valued continuous data, the seed can sampled from $U(0,1)$, and then transformed into the data using the quantile function. }
    
    The intuition behind our approach is that after fixing the seed, we search for a parameter $\theta^*$ such that when $(Y_i)_{i=1}^n$ are sampled from $f_{\theta^*}$, we have that $\thy =\thx+o_p(n^{-1/2})$.  	To arrive at the value $\theta^*$, we use one step of an approximate Newton method, described in Remark \ref{rem:Newton}. %In the case that $2\thx-\thz$ does not lie in $\Theta$, we project it into the space.

	%There are two ways that our proposed synthetic data procedure can be applied: 1) the estimate $\thx$ is computed without any privacy methods; then the synthetic data $(Y_i)_{i=1}^n$ are \emph{partially synthetic data}, since they depend on the un-sanitized value $\thx$. 2) $\thx$ is produced using differential privacy (or other formal privacy methods). \citet{smith2011privacy} showed that under very mild conditions, there exist efficient DP estimates for parameters. Using such tools, the  $(Y_i)_{i=1}^n$ are \emph{fully synthetic}, and satisfy the same formal privacy guarantee as the privatized $\thx$. In either case, the asymptotic analysis is the same.
	%However, there may be a substantial trade-off in finite-sample utility when formal privacy methods are used. We explore this trade-off in Section \ref{s:dp}.

    To facilitate the asymptotic analysis, we assume regularity conditions (R0)-(R4).  (R0) ensures that the seed has a known distribution, and that we have a method of transforming the seed into the data (see Example \ref{ex:locationScale} for an example). The assumption that $\Omega$ is bounded is very mild, as we can always use a change of variables to make the seed have bounded support. (R1)-(R3) are similar to standard conditions to ensure that there exists an efficient estimator, which are relatively mild and widely assumed in the literature \citep{serfling2009approximation,lehmann2004elements}.  Assumption (R4) is likely much stronger than needed, but ensures that several quantities, including the transformation $X_\theta$, vary smoothly in their parameters; this assumption is important to allow for the interchange of several derivatives in the proof of Theorem \ref{thm:onestep}. { Since (R4) requires that the density is continuous in $x$, this assumption also limits Theorem \ref{thm:onestep} to continuous distributions.}

     In this section, we will prove that our procedure satisfies $\thy=\thx+o_p(n^{-1/2})$ for continuous random variables which satisfy the regularity conditions (R0)-(R4). %We demonstrate through simulations in Section \ref{s:simulations} that in fact, the procedure also works for discrete distributions as well.
    
    %\begin{assumption}
	\begin{itemize}
	\item   [(R0)]  Let $(\Omega, \mscr F, P)$ be a probability space of the \emph{seed} $\omega$,  where $\Omega\subset \RR^m$ { is a bounded sample space, $\mscr F$ is a $\sigma$-algebra on $\Omega$, and $P$ is a probability measure with a continuous density $\pi$}. Let $X_\theta:\Omega \rightarrow \RR^d$ be a measurable function, %\todo{JA: removed bounded condition.}
 where 	$\theta$ lies in a compact space $\Theta \subset \RR^p$. Let  $f_\theta:\RR^d \rightarrow \RR^{\geq 0}$ denote the density of the random variable $X_\theta(\omega)$.
	
		\item [(R1)]  Let $\theta_0\in \Theta\subset \RR^p$ be the true parameter. Assume there exists an open ball $B(\theta_0)\subset \Theta$ about $\theta_0$, the model $f_\theta$ is identifiable, and that the set $\{x\in \RR^d\mid f_\theta(x)>0\}$ does not depend on $\theta$.
		\item [(R2)] The pdf $f_\theta(x)$ has three derivatives in $\theta$ for all $x$ and 
		there exist functions $g_i(x)$, $g_{ij}(x)$, $g_{ijk}(x)$ for $i,j,k=1,\ldots, p$  such that for all $x$ and all $\theta \in B(\theta_0)$,
		\[\left| \frac{\partial f_{\theta}(x)}{\partial \theta_i}\right|\leq g_i(x),\qquad
		\left| \frac{\partial^2 f_{\theta}(x)}{\partial \theta_i\partial \theta_j}\right|\leq g_{ij}(x),\qquad
		\left| \frac{\partial^3 f_{\theta}(x)}{\partial \theta_i\partial \theta_j\partial \theta_k}\right|\leq g_{ijk}(x).\]
		We assume that each $g$ satisfies $\int g(x) \ dx<\infty$ and $\EE_\theta g_{ijk}(X)<\infty$ for $\theta \in B(\theta_0)$. { Furthermore, we assume that there exists functions $h_{ij}(x)$ for $i,j=1,2,\ldots, p$ such that $|S(\theta,x)S^\top (\theta,x)|_{ij}\leq h_{ij}(x)$ for all $x$ and $\theta\in B(\theta_0)$, and that $\EE_\theta h_{ij}(X)<\infty$ for all $\theta\in B(\theta_0)$.}
		\item [(R3)] The Fisher information matrix $I(\theta_0) = \EE_{\theta_0} \{S(\theta_0,X)S^\top(\theta_0,X)\}$ consists of finite entries, and is positive definite.
		\item [(R4)]  The quantities $X_\theta(\omega)$, $\frac \partial{\partial\theta_i} X_\theta(\omega)$, $\frac{\partial}{\partial \theta_i}\log f_\theta(x)$, $\frac{\partial}{\partial \theta_i\partial x_k} \log f_\theta(x)$, $f_\theta(x)$, and $\frac{\partial}{\partial\theta_i} f_\theta(x)$ all exist and are all continuous in $\theta$, $x$, and $\omega$.
	\end{itemize}
%\end{assumption}
Assumption (R0) tells us that to produce a sample $X\sim f_\theta$, we can first sample the seed $\omega \sim P$ and then transform the seed into $Y \defeq X_\theta(\omega)$. This procedure results in a sample equal in distribution: $X\overset d = Y$.  By (R4), $X_\theta$ is assumed to be continuously differentiable in $\theta$, so we have that the mapping $X_\theta(\cdot)$ is smooth as $\theta$ varies. 

An idealized version of our algorithm works as follows: given $\thx$ computed from the original dataset, we first sample the seeds $\omega_1,\ldots, \omega_n\iid P$, and then while holding $\omega_1,\ldots, \omega_n$ fixed, solve for the value $\theta^*$ which satisfies:

\begin{equation}\label{eq:thetaStar}
\hat\theta\{X_{\theta^*}(\ul \omega)\} = \thx,
\end{equation}
thereby ensuring that the new sample has the same value of $\hat\theta$ as the original sample $\ul X$. Finally, we can produce our synthetic data $Y_i = X_{\theta^*}(\omega_i)$. This idealized version has been previously employed when the statistic $\thx$ is a \emph{sufficient statistic}, and used mainly for the task of hypothesis testing \citep{lindqvist2005monte,lindqvist2007conditional,lindqvist2013exact,lillegard1999exact,engen1997stochastic,lillegard1999exact,taraldsen2018conditional}. There are many settings where an exact solution to \eqref{eq:thetaStar} exists, such as location-scale families (Example \ref{ex:locationScale}). { However, in general it may be challenging to find an exact solution to Equation \eqref{eq:thetaStar}, and a solution may not even exist.}

%However, when there is no low dimensional sufficient statistic, or when there does not exist an exact solution to Equation \eqref{eq:thetaStar}, such as in Example \ref{ex:noSolution}, the previous work is inapplicable. %\todo{Example below does not satisfy (R4)}

%When implementing the ICSS algorithm in computing software, the use of numerical seeds is incredibly useful. For example in R, the command \texttt{set.seed} fixes the initial numerical seed, which is used to generate pseudo-random numbers. After setting the seed, sampling from a probability model with parameter $\theta$ is a deterministic function of the seed. Importantly, under the conditions of Assumption \ref{R1-R3}, for a fixed seed the sample varies smoothly as $\theta$ is varied. See Algorithm \ref{alg:optim} for R pseudo-code demonstrating how to use numerical seeds to solve Equation \eqref{eq:thetaStar}. In Algorithm \ref{alg:onestep}, we give R pseudo-code for the one-step approximate solution to \eqref{eq:thetaStar}, which we develop in Section \ref{s:onestep}.

\begin{ex}\label{ex:locationScale}
 In the case of continuous real-valued random variables, we can be more explicit about the ``seeds.''
 Recall that for $U\sim U(0,1)$, $F^{-1}_{\theta}(U) \sim f_{\theta}$ where $F^{-1}_{\theta}(\cdot)$ is the quantile function. So in this case, the  distribution $P$ can be taken as $U(0,1)$, and $X_{\theta}(\cdot)$ can be replaced with $F^{-1}_{\theta}(\cdot)$. 
 
 If $f_\theta$ is a location-scale family, where $\theta =(m,s)$, then there exists an explicit solution to \eqref{eq:thetaStar}. Just as above, we set $X_\theta(\cdot) = F_{m,s}^{-1}(u_i)=sF^{-1}_{0,1}(\omega_i)+m$, where $u_i \iid U(0,1)$, and we used the location-scale formula for the quantile function. Suppose that $\hat m$ and $\hat s$ are estimators of $m$ and $s$ such that $\hat m(ax+b)=a\hat m(x)+b$ and $\hat s(ax+b)=a\hat s(x)$. In the case of the normal distribution, $\hat m$ is the sample mean, and $\hat s$ is the sample standard deviation. 
 
 Then, for $\omega_1,\ldots, \omega_n\iid U(0,1)$, call $Z_i = F^{-1}_{0,1}(\omega_i)$. Then
 \[Y_i = \{Z_i-\hat m(\ul Z)\}\frac{\hat s(\ul X)}{\hat s(\ul Z)}+\hat m(\ul X),\]
 satisfies $\hat m(Y)=\hat m(X)$ and $\hat s(Y)=\hat s(X)$. We see that $Y_i = X_{\theta^*}(\omega_i)$, where $\theta^*=(m^*,s^*)$, where $m^*=\hat m(\ul X)-\hat m(\ul Z) \frac{\hat s(\ul X)}{\hat s(\ul Z)}$ and $s^* = \frac{\hat s(\ul X)}{\hat s(\ul Z)}$. 
\end{ex}

\begin{comment}
\begin{ex}\label{ex:noSolution}
[No solution to Equation \eqref{eq:thetaStar}]
Suppose that $X_i = W_i + N_i$, where $W_i \iid \mathrm{Bern}(\theta)$ and $N_i \iid U(0,1)$ and $W_i \indep N_i$. Note that $\thx = n^{-1} \sum_{i=1}^n X_i - \frac 12 $ is an efficient estimator for $\theta$. Using the notation of (R0), $\Omega = U(0,1)^2$ and $X_\theta : 
\Omega \rightarrow \RR$, defined by $X_\theta(u_1,u_2) = F^{-1}_{\theta}(u_1) + u_2$, where $F^{-1}_{\theta}(\cdot)$ is the quantile function for $\mathrm{Bern}(\theta)$. Then, for any value $\theta^* \in [0,1]$, $\thy = \hat\theta\left\{X_{\theta^*}(u^{(1)}_1,u^{(1)}_2),\ldots, X_{\theta^*}(u^{(n)}_1,u^{(n)}_{2})\right\}$ takes at most $n+1$ possible values: 
$\left\{n^{-1} \sum_{i=1}^n u_2^{(i)} -\frac 12 + \frac{i}{n}\middle| i \in \{0,1,2,\ldots, n\}\right\}$. As such, given $\thx$ from an independent sample, the probability that $\thx$ is in this set is zero. From another perspective, the decimal value of $n\thy$ is completely determined by $n^{-1} \sum_{i=1}^n u_2^{(i)}$, and is not influenced by $\theta^*$. While this example does not satisfy assumptions (R0)-(R4), as its density is not continuously differentiable in $x$, it illustrates that we cannot always expect to find a solution to Equation \eqref{eq:thetaStar}.
\end{ex}
\end{comment}

%{ Solving Equation \eqref{eq:thetaStar} for $\theta^*$ may be nontrivial, and in general we may not even know if a solution even exists. }
%Based on Example \ref{ex:noSolution}, we cannot always expect to find an exact solution to Equation \eqref{eq:thetaStar}. 
To avoid solving Equation \eqref{eq:thetaStar} exactly, in Theorem \ref{thm:onestep} and Algorithm \ref{alg:onestep}, we propose { an approximate solution which can be viewed as one step of an approximate Newton method, as described in Remark \ref{rem:Newton}.}
%the following intuition: Call $Z_i=X_{\thx}(\omega_i)$ and $Y_i = X_{\theta^*}(\omega_i)$, where $\theta^*$ is the exact solution to Equation \eqref{eq:thetaStar}. Since $\ul Z$ and $\ul Y$ use the same seeds, we expect that the differences $\thy-\theta^* = \thx-\theta^*$ and $\thz-\thx$ should be similar, especially for larger sample sizes. Solving $\thx-\theta^*=\thz-\thx$ for $\theta^*$ gives the solution $\theta^* = 2\thx-\thz$, which is exactly the one-step estimator of Theorem \ref{thm:onestep}.

    { \begin{rem}[One-step approximate Newton method]\label{rem:Newton}
    The ``one-step'' plugin value $\theta^*$ proposed in Theorem \ref{thm:onestep} can be viewed as one step of an approximate Newton method, which tries to solve $0=\frac 1n\sum_{i=1}^n S\{\thx,X_\theta(\omega_i)\}$. If there is a unique solution to this score equation, then this can be viewed as an indirect way of formulating Equation \eqref{eq:thetaStar}. The approximate Newton method would update 
    $\theta_{n+1}=\theta_n-I^{-1}(\theta_n) n^{-1} \sum_{i=1}^n S\{\thx,X_{\theta_n}(\omega_i)\},$
    where in Lemma 16 of the Supplementary Material, it is shown that $I(\theta)$ is the expected (matrix) derivative of $S\{\thx,X_\theta(\omega)\}$ with respect to $\theta$. Using $\thx$ as an initial value, the one-step approximate solution is 
    $\theta^*_{(1)}=\thx-I^{-1}(\thx)n^{-1} \sum_{i=1}^n S\{\thx,X_{\thx}(\omega_i)\}.$
    Then, using a Taylor expansion for $\thz$, where $Z_i = X_{\thx}(\omega_i)$, we see that 
    $\thz-\thx=I^{-1}(\thx)n^{-1} \sum_{i=1}^n S\{\thx,X_{\thx}(\omega_i)\}+o_p(n^{-1/2}).$
    Substitution gives $\theta^*_{(1)}=\thx-(\thz-\thx)+o_p(n^{-1/2})=2\thx-\thz+o_p(n^{-1/2})$, 
    which motivates our choice of $\theta^*=\mathrm{Proj}_{\Theta}(2\thx-\thz)$ as the plugin value used in Theorem \ref{thm:onestep} (the projection is only needed if $2\thx-\thz$ lies outside of the parameter space $\Theta$). Finally, note that $\theta^*=\theta_0+O_p(n^{-1/2})$, since Theorem \ref{thm:TVnaive} established that both $\thx$ and $\thz$ are $\sqrt n$-consistent estimators of $\theta_0$. %Thus, $\theta^*$ is primarily useful in generating the new dataset which approximately preserves $\thx$, and we are not using $\theta^*$ as an estimator for $\theta_0$. 
    \end{rem}
}

The following Theorem shows that regardless of whether a solution to Equation \eqref{eq:thetaStar} exists, the one-step procedure preserves the efficient statistic up to $o_p(n^{-1/2})$. %Note that the assumption (R5) is not necessary for Theorem \ref{thm:onestep}.

%%%   MAIN THEOREM
%\begin{thm}\label{thm:onestep}
\begin{restatable}{thm}{thmonestep}\label{thm:onestep}
Assume that (R0)-(R4) hold. Let $X_1,\ldots, X_n \iid f_{\theta_0}$ and let $\omega_1,\ldots, \omega_n\iid P$. 
%Set $Z_i = X_{\thx}(\omega_i)$ for $i=1,\ldots, n$. 
Set 
$\theta^* =\mathrm{Proj}_{\Theta}\left(2 \thx-\thz\right)$, where  $\hat \theta$ is an efficient estimator, $(Z_i)_{i=1}^n = (X_{\thx}(\omega_i))_{i=1}^n$, { and $\mathrm{Proj}_{\Theta}(x)$ maps $x$ to the nearest point in $\Theta$ in terms of Euclidean distance}.
%$Z_i = X_{\thx}(\omega_i)$ for $i=1,\ldots, n$. 
 Then for $(Y_i)_{i=1}^n = (X_{\theta^*}(\omega_i))_{i=1}^n$, we have  $\thy =  \thx + o_p(n^{-1/2})$.
\end{restatable}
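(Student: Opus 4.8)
Write $\psi_n(\theta)\defeq\hat\theta\bigl(X_\theta(\omega_1),\dots,X_\theta(\omega_n)\bigr)$ for the efficient estimator applied to the seed‑generated sample from $f_\theta$, and put $D_n(\theta)\defeq\psi_n(\theta)-\theta$. Since $(Z_i)=(X_{\thx}(\omega_i))$ we have $\thz=\psi_n(\thx)$, and likewise $\thy=\psi_n(\theta^*)$ with $\theta^*=\mathrm{Proj}_\Theta\bigl(2\thx-\psi_n(\thx)\bigr)$. The plan has three parts: (a) reduce the conclusion to a single analytic statement about $D_n$; (b) collect the consistency facts that make the projection asymptotically inactive and keep all arguments near $\theta_0$; (c) prove that analytic statement, which is where (R0)--(R3) and, crucially, the shared seeds enter.

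For (a): because $\theta_0$ is interior to $\Theta$ by (R1) and (by (b)) $2\thx-\thz\overset{p}{\to}\theta_0$, with probability tending to one the projection is inactive, so $\theta^*=2\thx-\thz$, and on that event a direct computation gives
\[
\thy-\thx=\psi_n(\theta^*)-\thx=D_n(\theta^*)+\bigl(\theta^*-\thx\bigr)=D_n(\theta^*)-\bigl(\psi_n(\thx)-\thx\bigr)=D_n(\theta^*)-D_n(\thx).
\]
Hence it suffices to show $D_n(\theta^*)-D_n(\thx)=o_p(n^{-1/2})$, and since both $\thx$ and $\theta^*$ converge to $\theta_0$, this reduces to showing that $D_n$ does not oscillate on the $n^{-1/2}$ scale over shrinking neighborhoods of $\theta_0$. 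For (b): efficiency gives $\sqrt n(\thx-\theta_0)=O_p(1)$, and applied to $(X_{\theta_0}(\omega_i))\iid f_{\theta_0}$ it gives $D_n(\theta_0)=\psi_n(\theta_0)-\theta_0\overset{p}{\to}0$; conditioning on $\thx$ and using consistency of the efficient estimator at every parameter (a triangular‑array argument over $f_{\thx}$ with $\thx\overset{p}{\to}\theta_0$) gives $\thz=\psi_n(\thx)\overset{p}{\to}\theta_0$, hence $\theta^*\overset{p}{\to}\theta_0$.

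For (c), the crux, I would prove: for every $\epsilon>0$ there is $\delta>0$ with
\[
\limsup_{n\to\infty}\ P\Bigl(\sup_{\theta\in\Theta,\ \|\theta-\theta_0\|\le\delta}\sqrt n\,\bigl\|D_n(\theta)-D_n(\theta_0)\bigr\|>\epsilon\Bigr)\le\epsilon,
\]
which combined with $\thx,\theta^*\overset{p}{\to}\theta_0$ yields $D_n(\theta^*)-D_n(\theta_0)=o_p(n^{-1/2})$ and $D_n(\thx)-D_n(\theta_0)=o_p(n^{-1/2})$, finishing the proof via (a). To prove the display I would use the asymptotically linear expansion that (R1)--(R3) force on an efficient estimator (it is asymptotically equivalent to the MLE): uniformly for $\theta$ in a small ball about $\theta_0$,
\[
D_n(\theta)=I^{-1}(\theta)\,\frac1n\sum_{i=1}^n S\bigl(\theta,X_\theta(\omega_i)\bigr)+r_n(\theta),\qquad \sup_{\|\theta-\theta_0\|\le\delta}\sqrt n\,\|r_n(\theta)\|=o_p(1),
\]
the uniform control of $r_n$ coming from a Taylor expansion of the estimating equation with third‑order remainders dominated via the $g_{ijk}$ of (R2), together with $I(\cdot)$ continuous and positive definite by (R2)--(R3). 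It then remains to control the leading term: since $\tfrac1n\sum_i S(\theta,X_\theta(\omega_i))$ has mean zero and is $O_p(n^{-1/2})$ and $I^{-1}$ is continuous, everything comes down to showing $\tfrac1n\sum_i\bigl[S(\theta,X_\theta(\omega_i))-S(\theta_0,X_{\theta_0}(\omega_i))\bigr]=o_p(n^{-1/2})$ uniformly on the small ball. This increment is a mean‑zero i.i.d.\ average whose scaled variance is $\mathrm{Var}\bigl(S(\theta,X_\theta(\omega))-S(\theta_0,X_{\theta_0}(\omega))\bigr)$; it vanishes as $\theta\to\theta_0$ because $\theta\mapsto X_\theta(\omega)$ is continuous by (R0) while the score varies continuously in $\theta$ by (R2), with the domination in (R2)--(R3) upgrading the resulting a.s.\ convergence of the coupled scores to $L^2$, and a chaining/maximal‑inequality argument then turns the pointwise estimate into the uniform one. (When the model and estimator are smooth in $x$ one can argue more directly that $\psi_n$ is differentiable with $\psi_n'(\theta)\overset{p}{\to}I_p$, via implicit differentiation of the likelihood equation and the two Bartlett identities.)

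I expect part (c), and within it the $L^2$‑continuity of the seed‑coupled score process, to be the main obstacle. Marginal asymptotic normality only pins down $D_n(\theta)$ to $O_p(n^{-1/2})$ at each fixed $\theta$; the argument hinges on the fact that the $O_p(n^{-1/2})$ fluctuations of $\psi_n(\theta)$ and of $\psi_n(\theta_0)$ nearly cancel when $\theta\approx\theta_0$ — which holds only because both samples are generated from the same seeds $\omega_1,\dots,\omega_n$, and which requires the smoothness in (R0) to make quantitative. Once this equicontinuity is in hand, handling the projection and passing from $o_p(1)$ consistency to the final $o_p(n^{-1/2})$ bound are routine.
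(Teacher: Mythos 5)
Your reduction is exactly the right skeleton and matches the paper's: with the projection inactive on an event of probability tending to one (since $2\thx-\thz$ is consistent and $\theta_0$ has an open ball inside $\Theta$ by (R1)), the identity $\thy-\thx=D_n(\theta^*)-D_n(\thx)$ is the algebraic heart of the matter, and the cancellation of the two fluctuations through the shared seeds is the correct mechanism. Where the paper diverges from you is in how it quantifies that cancellation: it never proves (or needs) a supremum over a ball. It uses the linear expansion of an efficient estimator (its Lemma on the score, part 3) at the two random points, i.e. $\psi_n(\theta)=\theta+I^{-1}(\theta_0)\,n^{-1}\sum_i S\{\theta_0,X_{\theta_0}(\omega_i)\}+o_p(n^{-1/2})$ for $\theta=\thx$ and $\theta=\theta^*$, after a mean-value expansion of the coupled score $n^{-1}\sum_i S\{\theta,X_\theta(\omega_i)\}$ about $\theta_0$ in which the derivative term is $o_p(1)\cdot O_p(n^{-1/2})$ by a separate key lemma: $n^{-1}\sum_i \tfrac{d}{d\theta}S\{\theta,X_\theta(\omega_i)\}=o_p(1)$, because the total derivative of the seed-coupled score has expectation exactly zero (the usual $-I(\theta)$ is cancelled by a $+I(\theta)$ coming from differentiating through $X_\theta(\omega)$, computed by a change of variables to the density). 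That one observation replaces your empirical-process program with a plain law of large numbers; your closing parenthetical remark that "$\psi_n'(\theta)\overset{p}{\to}I_p$" under smoothness is, in essence, this lemma, but you set it aside rather than making it the engine.

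The genuine gap is in your step (c), which you yourself flag as the crux but leave as a plan. Pointwise variance control of $S(\theta,X_\theta(\omega))-S(\theta_0,X_{\theta_0}(\omega))$ at each $\theta$ does not yield the maximal inequality you need: chaining requires quantitative increment or entropy control with a square-integrable envelope (e.g. a Lipschitz bound on $\theta\mapsto S(\theta,X_\theta(\omega))$), and (R0)--(R3) do not supply it --- (R2) dominates $\theta$-derivatives of $f_\theta$, not the dependence of $S$ on its second argument composed with $\partial_\theta X_\theta(\omega)$; even your claimed a.s.\,/\,$L^2$ continuity of the coupled score tacitly uses continuity of $S(\theta,x)$ in $x$, which is nowhere assumed, and the "uniformly $o_p(n^{-1/2})$ remainder" $r_n(\theta)$ in the linear expansion is asserted rather than derived. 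So as written the central display of (c) is not established under the stated assumptions, and the uniform-over-a-ball formulation makes the problem strictly harder than it needs to be. The missing idea is precisely the paper's zero-mean-derivative lemma, which quantifies the seed-coupling cancellation and lets one expand directly at the random points $\thx$ and $\theta^*$, so that no supremum, chaining, or Donsker-type condition ever enters.
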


Theorem \ref{thm:onestep} shows that our one-step approach to synthetic data outperforms the parametric bootstrap in terms of the first result in Theorem \ref{thm:TVnaive}: whereas sampling from the fitted model results in estimators with inflated variance, the one-step approach gives a sample whose estimator $\thy$ is equal to $\thx$ up to an asymptotically negligible error of $o_p(n^{-1/2})$.

The restriction of Theorem \ref{thm:onestep} to continuous distributions, due to (R4), 
 %Note that the assumption (R4) restricts the result of Theorem \ref{thm:onestep} to continuous distributions, as derivatives with respect to $x$ are undefined for discrete distributions. While this restriction 
 cannot be weakened with our current proof technique (which relies on a derivative with respect to $x$). However, we  offer numerical evidence through simulations that the result of Theorem \ref{thm:onestep}  seems to hold for discrete distributions as well. This suggests that  it may be possible to weaken the assumptions of Theorem \ref{thm:onestep}.% with significantly weaker assumptions may be possible. 

\begin{rem}[Seeds]
When implementing the procedure of Theorem \ref{thm:onestep}, it may be convenient to use numerical seeds. For example in R, the command \texttt{set.seed} can be used to emulate the result of drawing $Z_i$ and $Y_i$ with the same seed $\omega_i$. In Algorithm \ref{alg:onestep}, we describe the one-step procedure in pseudo-code. One must be careful with this implementation to ensure that \texttt{rsample} varies smoothly in $\theta$. 
\end{rem}

    \begin{algorithm}[t]
    \caption{One-Step Synthetic Data Pseudo-Code in R}
    \label{alg:onestep}
        	\setstretch{1}
    	\small
\SetKwInput{KwInput}{Input}                % Set the Input
\DontPrintSemicolon
  
  \KwInput{ Seed $\omega$, efficient estimator $\thx$, function \texttt{theta\_hat}(\ul y) to compute $\hat\theta(\ul y)$, function \texttt{rsample}($\theta$) which samples $n$ i.i.d. samples from $f_\theta$ using the seed $\omega$.\;}
  \texttt{set.seed}($\omega$)\;
 $\ul Z$ = \texttt{rsample}($\thx$)\;
 $\thz$ = \texttt{theta\_hat}($\ul Z$)\;
% Write Function with word ``Function''
  $\theta^* = 2\thx-\thz$\;
  \If{$\theta^* \not\in \Theta$}
  {$\theta^*= \mathrm{Proj}_{\Theta}(\theta^*)$\;}
  \texttt{set.seed}($\omega$)\;
  $\ul Y$ = \texttt{rsample}($\theta^*$)\;
  \SetKwInput{KwOutput}{Output}              % set the Output
    \KwOutput{$Y_1,\ldots, Y_n$}
\end{algorithm}

%	We illustrate Theorem \ref{thm:onestep} with two examples. In Example \ref{ex:burr}, we simulate from the Burr Type XII distribution, demonstrating that $\thy$ has similar performance as $\thx$, whereas $\thz$ has inflated variance. We also investigate the distributional properties of the $(Y_i)$ with the Kolmogorov-Smirnov test. We chose the Burr distribution because it is neither location-scale nor exponential family and so provides a non-trivial setting to test our approach.\footnote{For location-scale families, a  linear transformation can be used to produce a sample with the desired statistics. In exponential families, if the efficient statistic is sufficient then the distribution conditional on the statistic is independent of the parameter and can thus be sampled (in principle) without knowing the true parameter.}
%	In Example \ref{ex:contingency}, we apply our approach to a log-linear model to show how our approach performs on more complex  datasets.% A detailed explanation of the model is also presented in the example.
	
	\begin{rem}[DP fully synthetic data]
	The one-step procedure results in a sample $\ul Y$, which is conditionally independent of $\ul X$, given $\thx$. This is called \emph{partially synthetic data} because all aspects of $\ul X$ are protected except for $\thx$. Partially synthetic data can be appropriate in some settings, but in others we may require $\ul Y$ to satisfy a stronger privacy guarantee such as differential privacy. 
	We can easily use the one-step procedure to obtain a DP fully synthetic sample by using a DP efficient estimator as $\thx$. \citet{smith2011privacy} showed that under conditions similar to (R1)-(R3),  there always exists DP efficient estimators by using the subsample and aggregate technique. By the post-processing property of DP, the sample $\ul Y$ then has the same DP guarantee that $\thx$ has \citep[Proposition 2.1]{dwork2014algorithmic}. Theorem \ref{thm:onestep} is still valid when using a DP estimator for $\thx$ as only the efficiency of $\thx$ is used in the proof. In fact, the estimator $\hat\theta$ applied to the intermediate sample $\ul Z$ need not be the same one as used for $\thx$. For improved finite sample performance, it may be beneficial to use a non-private estimator for $\thz$ even when $\thx$ satisfies DP. See Section \ref{s:beta} where we investigate the performance generating of DP synthetic data using Algorithm \ref{alg:onestep}.
	\end{rem}
	
{	\begin{rem}
	    As indicated in Remark \ref{rem:Newton}, our plugin value $\theta^*$ can be viewed as one step of an approximate Newton method. Our method can also be viewed as one step of the iterative bootstrap, with a batch size of 1 \citep{guerrier2019simulation}, which is traditionally used to de-bias an inconsistent estimator. Both of these perspectives indicate that iterating the procedure could potentially reduce the error between $\thy$ and $\thx$ even further, and in some circumstances even find an exact solution. We leave it to future work to study the benefits of this iterative version, as well as conditions for convergence.
	\end{rem}}

\section{Investigating the Distribution of the One-Step Samples}\label{s:distribution}
In Section \ref{s:asymptotics}, we showed that the one-step approach to synthetic data solves one issue of the parametric bootstrap,  by preserving efficient estimators. The second problem of the parametric bootstrap was that the synthetic samples did not converge in total variation distance to the true joint distribution. 
%
%While we were not able to prove a distributional result for the one-step samples, 
In this section, we give evidence that the distribution of the one-step samples approximates the true joint distribution. 

Consider the one-step procedure: Let $\thx$ be given and draw $\omega_1,\ldots,\omega_n\iid P$. Set $Z_i = X_{\thx}(\omega_i)$ and call $\theta^* = 2\thx-\thz$. Finally, set $Y_i = X_{\theta^*}(\omega_i)$, which we know satisfies $\hat\vartheta \defeq \thy =\thx+o_p(n^{-1/2})$ (under assumptions (R0)-(R4)). Now, suppose that we knew the values $\theta^*$ and $\hat\vartheta$ beforehand and conditioned on them. The following Lemma shows that $Y_1,\ldots, Y_n\{\theta^*,\hat\theta(\ul Y)=\hat\vartheta\} \approx f_{\theta^*}^n\{y_1,\ldots, y_n\mid \hat\theta(\ul y)=\hat\vartheta\}$. The approximation comes from modifying the procedure slightly. Given both $\theta^*,\hat\vartheta\in \Theta$, we produce $\ul Y^{\theta^*}$ as follows: sample $\omega_i\iid P$, and set $Y_i^{\theta^*} = X_{\theta^*}(\omega_i)$. For this modified procedure, we have exactly $Y_1^{\theta^*},\ldots, Y_n^{\theta^*}\large|\{\hat\theta(\ul Y)=\hat\vartheta\} \sim f_{\theta^*}^n\{y_1,\ldots, y_n\mid \hat\theta(\ul y)=\hat\vartheta\}$, which we prove in Lemma \ref{lem:conditional}. { The key difference between this modified procedure and the one in Theorem \ref{thm:onestep} is that here we start with $\theta^*$ and condition on $\hat\theta(\ul Y)=\hat\vartheta$, whereas in the original procedure $\theta^*$ is a function of $\ul\omega$.}

%Next we return to the ICSS algorithm. In Lemma \ref{lem:conditional}, we give a formula for the conditional distribution of the ICSS algorithm, and show that it can be written in terms of the data generating distribution conditioned on the efficient statistic.

%\begin{lemma}\label{lem:conditional}
\begin{restatable}{lemma}{lemconditional}\label{lem:conditional}
We assume (R0) and use the notation therein. Let $\hat\vartheta,\theta^*\in \Theta$ such that there exists  $\ul \omega$ which solves the equation  $\hat\theta\{X_{\theta^*}(\ul\omega)\}=\hat\vartheta$. Let $\omega_1,\ldots, \omega_n \iid P$ and call $Y_{i}^{\theta^*} = X_{\theta^*}(\omega_i)$. Then $Y_1^{\theta^*},\ldots, Y_n^{\theta^*} |\{\hat\theta(\ul Y^{\theta^*})=\hat\vartheta\} \sim f_{\theta^*}^n\{y_1,\ldots, y_n \mid \hat\theta(\ul y) = \hat\vartheta\}$.
\end{restatable}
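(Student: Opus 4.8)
The plan is to reduce the statement to the elementary fact that the conditional law of a random vector given a measurable function of itself is determined by the joint law of the pair, and then to use (R0) to identify that joint law. First I would record the unconditional distribution of $\ul Y^{\theta^*}$. By (R0), each seed $\omega_i\sim P$ is pushed forward to $Y_i^{\theta^*}=X_{\theta^*}(\omega_i)$, whose law $PX^{-1}_{\theta^*}$ has density $f_{\theta^*}$ with respect to $\mu$; since $\omega_1,\ldots,\omega_n$ are i.i.d., so are $Y_1^{\theta^*},\ldots,Y_n^{\theta^*}$, and therefore $\ul Y^{\theta^*}$ has law $f_{\theta^*}^n$ on $(\mscr X^n,\mscr G^n)$ --- precisely the law of the generic draw $\ul y$ appearing on the right-hand side. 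Note that the continuous differentiability of $X_\theta$ in $\theta$ from (R0) plays no role in this lemma (it is needed only for Theorem \ref{thm:onestep}); only the pushforward identity is used.

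Next I would argue that conditioning commutes with this identification. Write $T=\hat\theta(\cdot):\mscr X^n\to\Theta$, a measurable map. The left-hand side is, by definition, a regular conditional distribution of $\ul Y^{\theta^*}$ given the event $\{T(\ul Y^{\theta^*})=\hat\theta\}$, and the right-hand side is a regular conditional distribution of an $f_{\theta^*}^n$-distributed vector given $\{T(\ul y)=\hat\theta\}$. Because $\ul Y^{\theta^*}\sim f_{\theta^*}^n$ and $T$ is the same map in both cases, the joint laws of $(\ul Y^{\theta^*},T(\ul Y^{\theta^*}))$ and of $(\ul y,T(\ul y))$ coincide; hence so do their disintegrations along the second coordinate, and therefore so do the two conditional distributions. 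Concretely, writing $q_t$ for a conditional distribution of an $f_{\theta^*}^n$-vector given $T=t$, the defining relation $\int_B q_t(A)\,d(f_{\theta^*}^n T^{-1})(t)=f_{\theta^*}^n(A\cap T^{-1}(B))$, required of all measurable $A\subseteq\mscr X^n$ and $B\subseteq\Theta$, pins down $q_t(A)$ for $(f_{\theta^*}^n T^{-1})$-a.e.\ $t$, and it holds verbatim with $\ul Y^{\theta^*}$ in place of the generic vector; so the two versions agree a.e. The hypothesis that some $\ul\omega$ solves $\hat\theta\{X_{\theta^*}(\ul\omega)\}=\hat\theta$ guarantees that $\hat\theta$ is an attained value, so the conditioning event is nondegenerate and the common conditional law may legitimately be evaluated at $t=\hat\theta$.

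The main, and essentially the only, obstacle is the measure-theoretic care needed when $T(\ul Y^{\theta^*})$ has a continuous distribution, so that $\{T=\hat\theta\}$ is a null event. One must commit to a convention for the conditional distribution --- a regular conditional distribution / disintegration, or equivalently the limit of conditioning on shrinking neighbourhoods of $\hat\theta$ --- verify that both sides are computed with that same convention, and note that the almost-everywhere-in-$t$ agreement of disintegrations upgrades to agreement at the specific value $\hat\theta$ either because $\hat\theta$ is an atom of $f_{\theta^*}^n T^{-1}$ (the discrete case) or because the chosen version is continuous in $t$ with $\hat\theta$ in its support, which is exactly what the solvability hypothesis is there to ensure. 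Once the convention is fixed there is no computation left: the identity is forced by the equality of the two joint laws.
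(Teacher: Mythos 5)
Your proposal is correct and takes essentially the same route as the paper: both rest on the pushforward identity from (R0) that $\ul Y^{\theta^*}=\{X_{\theta^*}(\omega_i)\}_{i=1}^n$ is distributed exactly as an i.i.d.\ sample from $f_{\theta^*}$, so that conditioning on $\hat\theta(\ul Y^{\theta^*})=\hat\theta$ is by construction the conditional law $f_{\theta^*}^n\{\,\cdot\mid \hat\theta(\ul y)=\hat\theta\}$. The only difference is one of formalization of the null conditioning event---the paper conditions on the positive-probability events $\{\hat\theta(X_{\theta^*}(\ul\omega))\in B_\delta(\hat\theta)\}$ and lets $\delta\rightarrow 0$, whereas you invoke agreement of disintegrations (regular conditional distributions) and note the same shrinking-neighbourhood convention as an equivalent reading---which is the same idea at the same level of rigor.
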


Lemma \ref{lem:conditional} suggests that the conditional distribution of $\ul Y$ is related to the conditional distribution of $\ul X$. Theorem \ref{thm:klGeneral} shows that when this is the case, the marginal distributions are also closely related.  Theorem \ref{thm:klGeneral} is similar in spirit to a result of \citet{le1956asymptotic} which showed that efficient estimators are \emph{asymptotically sufficient}, meaning that with large $n$ an approximate equivalent likelihood function can be constructed which only involves the parameter and the efficient estimator. First, we need an additional assumption on the distribution of $\hat\theta(\ul X)$. 

%We will combine Lemma \ref{lem:conditional} and Theorem \ref{thm:klGeneral} to address the performance of the ICSS algorithm, but first we need to characterize the rate of convergence of $\theta^*$ to $\theta$. It will be convenience to call $h_{\theta,n}(\theta^*)$ the marginal distribution of $\theta^*$, where the random variable $\theta^*$ is generated as follows: Assuming (R0)-(R5), that there exists a unique solution to Equation \eqref{eq:thetaStar}, and using the notation of Theorem \ref{thm:klGeneral}, sample $\hat\theta \sim g_{\theta,n}(\hat\theta)$, where $g_{\theta,n}(\cdot)$, and sample $\omega_1,\ldots,\omega_n\iid P$. Then $\theta^*$ is defined to be the solution to the equation $\hat\theta\{X_{\theta^*}(\ul \omega)\}=\hat\theta$.

%\begin{assumption}
\begin{itemize}
 \item [(R5)] Let $\thx$ be a randomized efficient estimator of $\theta$, with conditional density $g_n(\thx|\ul x)$. We assume that there exists a sequence $(M_n)_{n=1}^\infty$ such that $g_n(\thx|\ul x)\leq M_n$ for all values of $\thx$ and $\ul x$. Let $g_{\theta,n}(\cdot)$ be the marginal density of $\hat\theta(\ul X)$ based on the sample $X_1,\ldots, X_n \iid f_\theta$. We assume that there exists functions $G_{ijk}(\hat\vartheta)$ such that  $\left| \frac{\partial^3}{\partial \theta_i\partial \theta_j\partial \theta_k} \log g_{\theta,n}(\hat\theta)\right|\leq nG_{ijk}(\hat\vartheta)$ for all $\hat\vartheta$, all $n\geq 1$,  and all $\theta \in B(\theta_0)$, where $\EE_{\hat\vartheta \sim \theta}G_{ijk}(\hat\vartheta)<\infty$.
\end{itemize}
%\end{assumption}

 In (R5), we consider that $\thx$ is a randomized statistic, so we can also write $g_n(\hat\vartheta\mid \ul X)$ to represent the distribution of $\thx$ given $\ul X$ (which we assume does not depend on $\theta$, since  $\thx$ is a statistic). Any deterministic statistic can be expressed as a limit of randomized statistics, where the noise due to randomness goes to zero. For example, \citet{barber2020testing} consider statistics which are zeros of the  noisy score function, where the noise is normally distributed and $o_p(n^{-1/2})$.

%Theorem \ref{thm:klGeneral} shows that when using ACSS with plug-in value of $\theta_n$ for $\theta$, the KL distance between the approximate samples and the true sampling distribution goes to zero so long as $\theta_n=\theta+O(n^{-1/2})$. This result heavily relies on the efficiency of the estimator. Recall that \citet{le1956asymptotic} show that an efficient estimator is asymptotically sufficient. Theorem \ref{thm:klGeneral} illustrates this phenomenon by showing that the dependence on $\theta_n$ is small, as $n$ grows.
{ Theorem \ref{thm:klGeneral} shows that if we condition on the same efficient estimator, then considering the sample $\ul X$ generated from the true parameter $\theta$ or $\ul Y$ generated from a sequence $\theta_n=\theta+O(n^{-1/2})$, the marginal distributions of $\ul X$ and $\ul Y$ converge in KL divergence. By Pinsker's inequality, we know that $\mathrm{TV}(\ul X,\ul Y)\leq \sqrt{\frac 12 \mathrm{KL}(\ul X,\ul Y)}$, establishing convergence in total variation distance as well. Note that (R4) is not needed for Theorem \ref{thm:klGeneral}. As such, this theorem applies to both continuous and discrete distributions. }

%\begin{thm}\label{thm:klGeneral}
\begin{restatable}{thm}{thmklGeneral}\label{thm:klGeneral}
Under assumptions  (R0)-(R3), let $\theta\in \Theta$ and let $\theta_n$ be a sequence of values in $\Theta$. Let $\hat\theta(\cdot)$ be a randomized estimator based on a sample $X_1,\ldots, X_n \iid f_\theta$, with conditional distribution $g_n(\hat\vartheta\mid \ul X)$ and marginal distribution $g_{\theta,n}(\hat\theta)$, which satisfy (R5). Then the KL divergence between the marginal distributions of $X_1,\ldots, X_n$ and $Y_1,\ldots,Y_n \sim f^n_{\theta_n}\{y_1,\ldots, y_n \mid \hat\theta(\ul y)=\hat\theta(\ul X)\}$, where $\hat\theta(\ul X)\sim g_{\theta,n}(\hat\vartheta)$ is 
\begin{align*}
\mathrm{KL}\left(X_1,\ldots,X_n\middle|\middle| Y_1,\ldots, Y_n\right)
%&=\mathrm{KL}\left(f_\theta^n(\ul x\mid \hat\theta(\ul x))g_{\theta,n}(\hat\theta(x)) \middle|\middle| f_{\theta_n}^n(\ul x\mid \hat\theta(\ul x))g_{\theta,n}(\hat\theta(\ul x))\right)\\
%&=\EE_{\hat\theta\sim g_{\theta,n}}\mathrm{KL}\left(f_\theta^n(\ul x\mid \hat\theta(\ul x)=\hat\theta) \middle|\middle| f_{\theta_n}^n(\ul x\mid \hat\theta(\ul x)=\hat\theta)\right)\\
=o(n) \lVert \theta_n-\theta\rVert^2 +O(n) \lVert \theta_n-\theta\rVert^3.\end{align*}
In particular, if $\theta_n-\theta=O(n^{-1/2})$, 
then the above KL divergence goes to zero as $n\rightarrow \infty$.
\end{restatable}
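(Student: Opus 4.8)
The plan is to reduce $\mathrm{KL}(X_1,\dots,X_n\,\|\,Y_1,\dots,Y_n)$ to a difference of two Kullback--Leibler divergences \emph{in the parameter}, and then Taylor-expand both around $\theta$. Write $P_{\ul X}=f_\theta^n$ for the law of $\ul X$ and $Q_{\ul Y}$ for the law of $\ul Y$. Since $\hat\theta(\cdot)$ is a randomized kernel $g_n(\hat t\mid\ul x)$ with $\theta$-marginal $g_{\theta,n}$, the density of $\ul Y$ is $q(\ul y)=f_{\theta_n}^n(\ul y)\,\EE_{\hat t\sim g_n(\cdot\mid\ul y)}\!\big[g_{\theta,n}(\hat t)/g_{\theta_n,n}(\hat t)\big]$. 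First I would pass to the joints $P_{\ul X,\hat\theta}(\ul x,\hat t)=f_\theta^n(\ul x)g_n(\hat t\mid\ul x)$ and $Q_{\ul X,\hat\theta}(\ul x,\hat t)=g_{\theta,n}(\hat t)\,f_{\theta_n}^n(\ul x)g_n(\hat t\mid\ul x)/g_{\theta_n,n}(\hat t)$, whose likelihood ratio factors as $\big(f_\theta^n(\ul x)/f_{\theta_n}^n(\ul x)\big)\cdot\big(g_{\theta_n,n}(\hat t)/g_{\theta,n}(\hat t)\big)$; taking expectations of the log against the respective marginals, the joint KL equals exactly $\mathrm{KL}(f_\theta^n\|f_{\theta_n}^n)-\mathrm{KL}(g_{\theta,n}\|g_{\theta_n,n})$. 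Marginalizing out $\hat\theta$ and applying the data-processing inequality gives
\[
\mathrm{KL}(X_1,\dots,X_n\,\|\,Y_1,\dots,Y_n)\ \le\ \mathrm{KL}(f_\theta^n\|f_{\theta_n}^n)-\mathrm{KL}(g_{\theta,n}\|g_{\theta_n,n}),
\]
so, the left side being nonnegative, it suffices to bound this difference. (The same manipulation also shows the right side equals $\EE_{\hat t\sim g_{\theta,n}}\mathrm{KL}\big(f_\theta^n(\cdot\mid\hat\theta=\hat t)\,\|\,f_{\theta_n}^n(\cdot\mid\hat\theta=\hat t)\big)$, which makes transparent that what is being controlled is the failure of asymptotic sufficiency of $\hat\theta$.)

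For the first term, tensorization gives $\mathrm{KL}(f_\theta^n\|f_{\theta_n}^n)=n\,\mathrm{KL}(f_\theta\|f_{\theta_n})$, and under (R1)--(R3) the standard second-order expansion of the KL of a regular parametric family yields $\mathrm{KL}(f_\theta\|f_{\theta_n})=\tfrac12(\theta_n-\theta)^\top I(\theta)(\theta_n-\theta)+O(\|\theta_n-\theta\|^3)$, with the cubic remainder dominated using the functions $g_{ijk}$ of (R2); thus $\mathrm{KL}(f_\theta^n\|f_{\theta_n}^n)=\tfrac n2(\theta_n-\theta)^\top I(\theta)(\theta_n-\theta)+O(n\|\theta_n-\theta\|^3)$. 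For the second term, let $I_{g,n}(\theta)=\EE_\theta\big[(\nabla_\theta\log g_{\theta,n})(\nabla_\theta\log g_{\theta,n})^\top\big]$ be the Fisher information of the family $\{g_{\theta,n}\}$ at $\theta$; assumption (R4) supplies precisely the third-log-derivative domination (with the factor $n$) needed to differentiate under the integral and to bound the Taylor remainder, giving $\mathrm{KL}(g_{\theta,n}\|g_{\theta_n,n})=\tfrac12(\theta_n-\theta)^\top I_{g,n}(\theta)(\theta_n-\theta)+O(n\|\theta_n-\theta\|^3)$. Subtracting,
\[
\mathrm{KL}(f_\theta^n\|f_{\theta_n}^n)-\mathrm{KL}(g_{\theta,n}\|g_{\theta_n,n})=\tfrac12(\theta_n-\theta)^\top\big(nI(\theta)-I_{g,n}(\theta)\big)(\theta_n-\theta)+O(n\|\theta_n-\theta\|^3).
\]

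The crux is therefore to establish $nI(\theta)-I_{g,n}(\theta)=o(n)$ in operator norm, i.e.\ that per observation an efficient estimator carries asymptotically the full Fisher information --- a quantitative form of the asymptotic sufficiency of efficient estimators in the sense of \citet{le1956asymptotic}. The information-processing inequality already gives $I_{g,n}(\theta)\preceq nI(\theta)$; for the matching lower bound I would use the chain-rule decomposition $nI(\theta)=I_{g,n}(\theta)+\EE_{\hat t\sim g_{\theta,n}}\big[I_{\ul X\mid\hat\theta=\hat t}(\theta)\big]$ and show the conditional-information term is $o(n)$, which is exactly where the efficiency $\sqrt n(\hat\theta-\theta)\overset{d}{\to}N(0,I^{-1}(\theta))$ enters (equivalently, rescaling $g_{\theta,n}$ toward the limiting $N(\theta,I^{-1}(\theta)/n)$ shape and using (R4) to upgrade weak convergence to convergence of the relevant Fisher informations). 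Feeding $nI(\theta)-I_{g,n}(\theta)=o(n)$ into the last display gives $\mathrm{KL}(X_1,\dots,X_n\,\|\,Y_1,\dots,Y_n)\le o(n)\|\theta_n-\theta\|^2+O(n)\|\theta_n-\theta\|^3$, and the final assertion follows by taking $\theta_n-\theta=O(n^{-1/2})$. I expect this asymptotic-sufficiency estimate $I_{g,n}(\theta)=nI(\theta)+o(n)$ to be the main obstacle; the two Taylor expansions are routine given the domination hypotheses (R2) and (R4), and the reduction in the first paragraph is a short exact identity.
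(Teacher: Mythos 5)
Your reduction and expansions coincide with the paper's own argument: the paper also passes to the joint with $\hat\theta$, invokes the data-processing inequality, and arrives at exactly the identity $\mathrm{KL}(\ul X\,\|\,\ul Y)\le \mathrm{KL}(f_\theta^n\|f_{\theta_n}^n)-\mathrm{KL}(g_{\theta,n}\|g_{\theta_n,n})$, then Taylor-expands the first term using (R2) and the second using (R4), checking $\EE_{\hat\theta\sim\theta}\nabla\log g_{\theta,n}(\hat\theta)=0$ and $\EE_{\hat\theta\sim\theta}\nabla^2\log g_{\theta,n}(\hat\theta)=-I_{g,n}(\theta)$ by dominated convergence. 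So the architecture is right, and you have correctly isolated the crux: showing $I_{g,n}(\theta)=nI(\theta)+o(n)$, with the data-processing inequality for Fisher information supplying $I_{g,n}(\theta)\preceq nI(\theta)$.

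The gap is that you never actually prove the matching lower bound; you only name two possible strategies and admit it is ``the main obstacle.'' The chain-rule route requires showing $\EE_{\hat t\sim g_{\theta,n}}\bigl[I_{\ul X\mid\hat\theta=\hat t}(\theta)\bigr]=o(n)$, and this does not follow ``exactly'' from the definition of efficiency (a statement about convergence in distribution of $\sqrt n(\hat\theta-\theta)$) without a separate argument; likewise, ``upgrading weak convergence to convergence of the relevant Fisher informations'' is false in general --- weak convergence controls neither score moments nor derivatives of densities, and (R4) only dominates third log-derivatives, so this step would need real work. The paper closes the gap much more directly: by the Cram\'er--Rao bound (with asymptotic unbiasedness), $\{I_{g,n}(\theta)\}^{-1}\preceq \var\{\hat\theta(\ul X)\}+o(1/n)$, and efficiency gives $\var\{\hat\theta(\ul X)\}=n^{-1}I^{-1}(\theta)+o(1/n)$; inverting yields $I_{g,n}(\theta)\succeq n\{I(\theta)+o(1)\}$, which combined with the data-processing upper bound gives $I_{g,n}(\theta)=nI(\theta)+o(n)$ and hence the stated $o(n)\lVert\theta_n-\theta\rVert^2+O(n)\lVert\theta_n-\theta\rVert^3$ bound. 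If you replace your sketched asymptotic-sufficiency step with this Cram\'er--Rao argument (or carry out the conditional-information bound rigorously), your proof is complete and essentially identical to the paper's.
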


Noting that $\theta^* =\theta+O_p(n^{-1/2})$ and combining Lemma \ref{lem:conditional} with Theorem \ref{thm:klGeneral} along with the discussion at the beginning of this section suggests that the distribution of the one-step synthetic data approaches the distribution of the original $\ul X$ as the sample size increases giving consistent synthetic data. This suggests that the one-step synthetic data avoids the problem of property 2 of Theorem \ref{thm:TVnaive}, which showed that the parametric bootstrap resulted in inconsistent synthetic data.  

In Section \ref{s:burr} we show that in the case of the Burr distribution, the one-step synthetic data is indistinguishable from the true distribution in terms of the Kolmogorov-Smirnov (K-S) test, whereas for the parametric bootstrap the K-S test has significantly higher power, indicating that the one-step synthetic data is asymptotically consistent, whereas the parametric bootstrap synthetic data is inconsistent. 
		\section{Examples and Simulations}\label{s:simulations}
		In this section, we demonstrate the performance of the  one-step synthetic data in several simulations. In Sections \ref{s:burr} and \ref{s:log-linear}, we produce synthetic data from the Burr distribution as well as a log-linear model. In Section \ref{s:beta}, we produce differentially private synthetic data for the beta distribution. In Section \ref{s:DPtesting} we use our methods to derive a hypothesis test for the difference of proportions under differential privacy. %In Section \ref{s:regression}, we compare the running time of the one-step approach compared to the MCMC approach of \citet{barber2020testing} when implemented on a linear regression problem. 
		\subsection{Burr type XII distribution}\label{s:burr}

%	\begin{ex}[Burr type XII distribution]\label{s:burr}

		The Burr Type XII distribution, denoted $\mathrm{Burr}(c,k)$, also known as the Singh–Maddala distribution, is a useful model for income \citep{mcdonald2008some}. The distribution has pdf $f(x)=ckx^{c-1}(1+x^c)^{-(k+1)}$, with support $x>0$. Both $c$ and $k$ are positive. The Burr distribution was chosen for our first simulation because 1)  the data are one-dimensional, allowing for the Kolmogorov-Smirnov (K-S) test to be applied, and 2) as it is not exponential family or location-scale, deriving the exact conditional distribution,  given the MLE $\hat\theta$, is non-trivial.
		
		 First, we will use this example to illustrate the notation of our theory. Suppose we are given observations $X_1,\ldots, X_n\iid \mathrm{Bur}(c,k)$, from unknown values of $c$ and $k$. Let $\hat \theta$ be the MLE (not available in closed-form). The procedure works as follows: Let $\omega_i\iid U(0,1)$ for $i=1,\ldots, n$. We define the function $X_\theta(\omega)= F^{-1}_\theta(\omega)$, where $F^{-1}_\theta(\omega)=\{(1-\omega)^{-1/k}-1\}^{1/c}$. Then, we first set $Z_i = F^{-1}_{\thx}(\omega_i)$, and after computing $\thz$, our synthetic data is $Y_i = F^{-1}_{\theta^*}(\omega_i)$, where $\theta^* = \max(2\thx-\thz,0)$, with the max  applied to both entries of $\theta$.

		 In the following, we conduct a simulation study to verify that the samples generated using Algorithm \ref{alg:onestep} are indistinguishable from the original unknown distribution, as tested via the K-S test. %As the Burr distribution is not a exponential family, nor are the parameters location-scale, this distribution provides a non-trivial example with which to test our procedure. 
		
			For the simulation, we set $c=2$ and $k=4$, and denote $\theta = (c,k)$. Let $\hat \theta_{MLE}$ be the maximum likelihood estimator (MLE). %, and $\hat \theta_{QME}$ be the quantile matching estimator using the $1/3$ and $2/3$ quantiles. 
			%Note that $\hat\theta_{QME}$ is a $\sqrt n$ estimator, but is not efficient. 
			We draw $X_i \iid \mathrm{Burr}(2,4)$, $Z_i \iid \mathrm{Burr}\{\hat \theta_{MLE}(\ul X)\}$, and  $(Y_i)_{i=1}^n$ from Algorithm \ref{alg:onestep}. 
			The simulation is conducted for $n\in \{100,1000,10000\}$ with results averaged over 10000 replicates for each $n$.
			
			%Over the replicates, we compute the MLE and report the average squared $\ell_2$-distance to the true parameters, which estimates the variance. The results are in Table \ref{tab:burrTheta}. When sampling from  the fitted model, $\thz$ has about twice the variance as $\thx$, whereas $\thy$ has very similar variance as $\thx$. %Using the  $(Y_i')$ data results in a very poor $\thy'$ with even larger variance than $\thz$.% In the last row of Table \ref{tab:burrTheta}, we include the average squared $\ell_2$-distance between $\thy$ and $\thx$, which we see shrinks faster than $O(n^{-1})$.

	%\begin{comment}
        \begin{figure}%[ht]
        \begin{minipage}[c]{\linewidth}
        \captionof{table}{Empirical power of the Kolmogorov-Smirnov test for $\mathrm{Burr}(2,4)$ at type I error $.05$. $(X_i)$ are drawn i.i.d from $\mathrm{Burr}(2,4)$, $(Z_i)$ are drawn i.i.d from $\mathrm{Burr}\{\thx\}$, and $(Y_i)$ are from Algorithm \ref{alg:onestep}. Results are averaged over 10000 replicates, for each $n$. Standard errors are approximately $0.0022$ for lines 1 and 3, and  $0.0036$ for line 2.}% and $0.0049$ for line 4.}
        		\label{tab:burrKS}
        		\centering
        		\begin{tabular}{llll}
        		%	&\multicolumn{3}{c}{$n$}\\
        		%	\cmidrule(ll){2-4}
        	\phantom{x}	$n$:	& 100 & 1000 & 10000 \\ 
        			\hline
        			$(X_i)$  & 0.0471 & 0.0464 & 0.0503 \\ 
        			$(Z_i)$  & 0.1524 & 0.1541 & 0.1493 \\ 
        			$(Y_i)$  & 0.0544 & 0.0489 & 0.0485 \\ 
        		%	$(Y_i')$ & 0.4431 & 0.3971 & 0.3927 \\ 
        			\hline
        		\end{tabular}
        		\end{minipage}
        	\end{figure}
%%%%% 

			We calculate the empirical power of the K-S test, comparing each sample with the true distribution $\mathrm{Burr}(2,4)$, at type I error $.05$. The results are presented in Table \ref{tab:burrKS}. We see that the $(X_i)$ have empirical power approximately $.05$, confirming that the type I error is appropriately calibrated. We also see that the K-S test using  $(Y_i)$ has power approximately $.05$, indicating that the empirical distribution of the one-step samples $(Y_i)$ is very close to the true distribution. On the other hand, we see that the K-S test with $(Z_i)$ has power $.15$, significantly higher than the type I error, indicating that the parametric bootstrap samples $(Z_i)$ are from a fundamentally different distribution than the $(X_i)$. This result is in agreement with Theorem \ref{thm:TVnaive} and the results of Section \ref{s:distribution}.  

%			\end{ex}

\subsection{Log-linear model for seatbelt data}\label{s:log-linear}
		
%	\end{ex}   
	\begin{figure}
	\begin{minipage}[c]{\linewidth}
				\captionof{table}{Recorded injuries according to seatbelt use, gender, and location. Source: \citet[Table 8.8]{agresti2003categorical}. Originally credited to Cristanna Cook, Medical Care Development, Augusta, Maine.}
		\label{tab:seatbelt}
		\centering
		\begin{tabular}{lllrr}%\toprule
			\multicolumn{3}{c}{\textbf{}} &   \multicolumn{2}{c }{\textbf{Injury}} \\  
			\cmidrule(ll){4-5}
			\textbf{Gender}&\textbf{Location}&\textbf{Seatbelt}&\multicolumn{1}{c}{No}&\multicolumn{1}{c}{Yes}\\
			\midrule
			%\vspace{-.2cm}
		%	\hline\\
			%\vspace{-.5cm}
			Female&Urban&No&7,287&996\\ 
			&&     Yes&11,587&759\\
			&Rural&No&3,246&973\\
			&&     Yes&6,134&757\\
			Male&Urban&No&10,381&812\\
			&&Yes&10,969&380\\
			&Rural&No&6,123&1,084\\
			&&Yes&6,693&513\\
			\bottomrule
		\end{tabular}
\end{minipage}
	\end{figure}
%	\begin{ex}[Log-linear model]\label{s:log-linear}
		%In this example, we study the performance of Algorithm \ref{alg:onestep} on a log-linear model. 
		%We base this example on the $2^4$ contingency table in Table \ref{tab:seatbelt}. 
		%Table  \ref{tab:seatbelt} summarizes observations 
		This example is based on a dataset of of 68,694 passengers in automobiles and light trucks involved in accidents in the state of Maine in 1991. Table \ref{tab:seatbelt} tabulates passengers according to gender (G), location (L), seatbelt status (S), and injury status (I).  This example { gives numerical evidence} that the result of Theorem \ref{thm:onestep} holds even for discrete distributions.

		As in \citet{agresti2003categorical}, we fit a hierarchical log-linear model based on all one-way effects and two-way interactions. The model is summarized in Equation \eqref{eq:loglinear}, where $\mu_{ijk\ell}$ represents the expected count in bin $i,j,k,\ell$.  The  parameter $\la_i^G$ represents the effect of gender, and parameter $\la_{ij}^{GL}$ represents the interaction between gender and location. The other main effects and interactions are analogous.

				\begin{figure}[ht]
		  \begin{subfigure}[t]{0.48\textwidth}
	%	\raisebox{0pt}{
		\includegraphics[width=\textwidth]{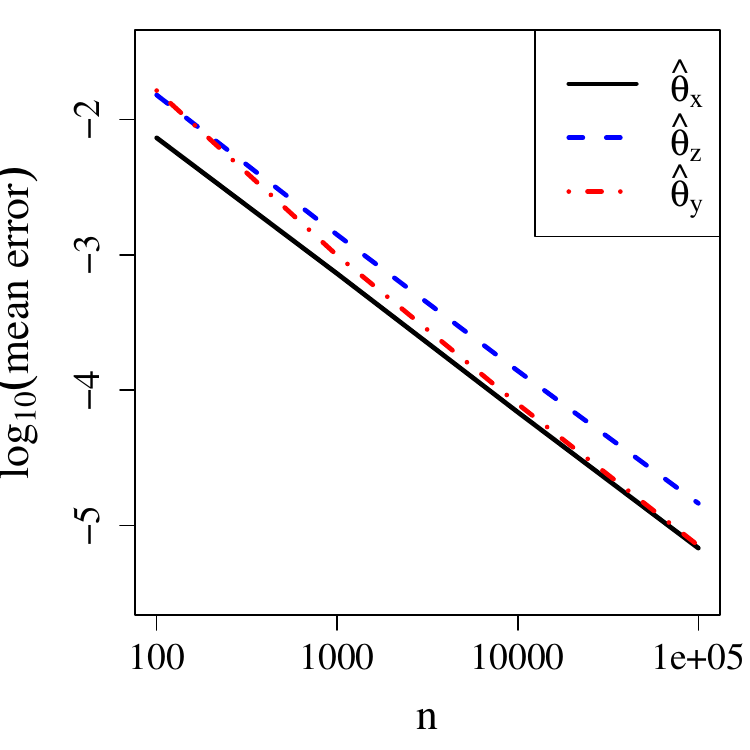}%}
		\caption{Simulations  corresponding to the log-linear model with two-way interactions from Section  \ref{s:log-linear}}% \phantom{$\thx$ is the MLE, whereas $\thz$ and $\thy$ both satisfy DP}}
		\label{fig:log-linear}
		\end{subfigure}
		\hspace{.02\textwidth}
		\begin{subfigure}[t]{0.48\textwidth}
			\includegraphics[width=\textwidth]{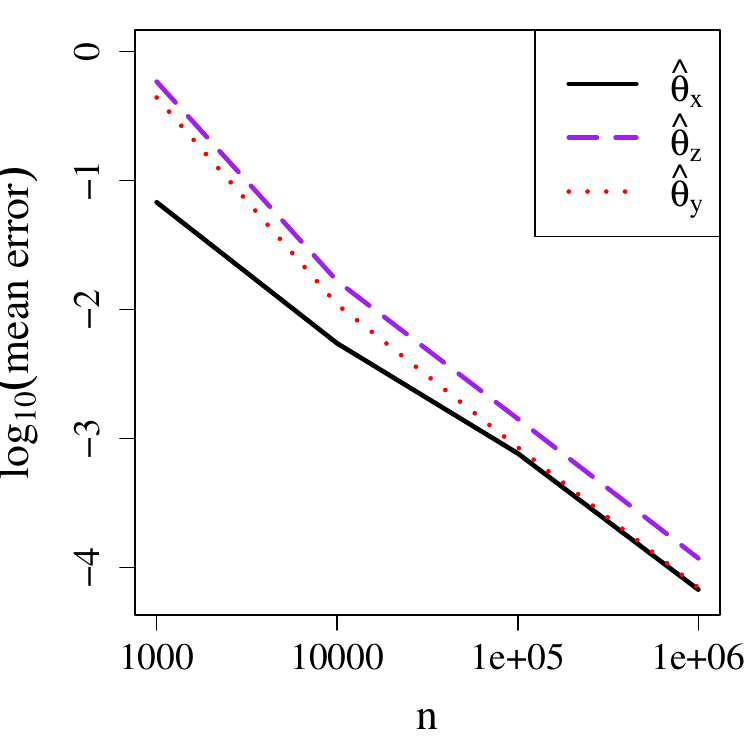}
			\caption{Simulations for the beta distribution from Section \ref{s:beta}. $\thx$ is the MLE. $\thz$ and $\thy$ both satisfy 1-DP.}
			\label{fig:beta}
			\end{subfigure}
			\caption{Average squared $\ell_2$-distance between the estimated parameters and the true parameters on the log-scale. Averages are over 200 replicates for both plots. $\thx$ is from the true model, $\thz$ from the fitted model, and $\thy$ from Algorithm \ref{alg:onestep}.}
		\end{figure}

		\begin{equation}\label{eq:loglinear}
		\log \mu_{ijk\ell} = \la + \la_i^G + \la_j^L +\la_k^S + \la_\ell^I
		+\la_{ij}^{GL}+\la_{ik}^{GS}+\la_{i\ell}^{GI}
		+\la_{jk}^{LS}+\la_{j\ell}^{LI}+\la_{k\ell}^{SI}\end{equation}
		
		For our simulations, we	treat the fitted parameters as the true parameters, to ensure that model assumptions are met. We simulate from the fitted model at sample sizes $n\in \{10^2,10^3,10^4,10^5\}$ and compare the performance in terms of the fitted probabilities for each bin of the contingency table. 		The results are plotted in Figure \ref{fig:log-linear}, with both axes on log-scale. The ``mean error'' is the average squared $\ell_2$ distance between the estimated parameter vector and the true parameter vector, averaged over 200 replicates. To interpret the plot, note that if the error is of the form $\mathrm{error} = cn^{-1}$, where $c$ is a constant, then $\log(\mathrm{error}) = c+(-1) \log(n)$. So, the slope represents the convergence rate, and the vertical offset represents the asymptotic variance. In Figure \ref{fig:log-linear}, we see that the curve for $\thy$, based on one-step samples,  approaches the curve for $\thx$, indicating that they have the same asymptotic rate and variance. On the other hand, the curve for  $\thz$, based on parametric bootstrap samples, has the same slope, but does not approach the $\thx$ curve, indicating that $\thz$ has the same rate but inflated variance. { In fact, Theorem \ref{thm:TVnaive} indicates that $\var(\thz)\approx 2\var(\thx)$.}

		We see that our procedure approximately preserves the sufficient statistics, similar to sampling from a conditional distribution. 	Previous work has proposed procedures to sample directly from conditional distributions for contingency table data. However, these approaches require sophisticated tools from algebraic statistics, and are computationally expensive (e.g., MCMC) \citep{karwa2013conditional}. In contrast, our approach is simple to implement and highly computationally efficient. Our approach is also applicable for a wide variety of models, whereas the techniques to sample directly from the conditional distribution often require a tailored approach for each setting.
		
%%%%%
\subsection{Differentially private beta distributed synthetic data}\label{s:beta}

		In this example, we assume that $X_1,\ldots, X_n \iid \mathrm{Beta}(\alpha,\beta)$, where $\alpha,\beta\geq 1$, and our goal is to produce differentially private (DP) synthetic data. 	Often, to ensure finite sensitivity, the data are clamped to artificial bounds $[a,b]$, introducing bias in the DP estimate. Naive bounds are fixed in $n$, resulting in asymptotically negligible noise, but $O_p(1)$ bias. In Section S2 of the Supplementary Material, we show that it is possible to increase the bounds in $n$ to produce both noise and bias of order $o_p(n^{-1/2})$, resulting in an efficient DP estimator. In this section, we show through simulations that using this estimator along with Algorithm \ref{alg:onestep} results in a DP sample with optimal asymptotics. %While we work with the beta distribution, this approach may be of value for other exponential family distributions as well. 

		For the simulation, we sample $X_1,\ldots, X_n \iid \mathrm{Beta}(5,3)$, with varying sample sizes, $n \in \{10^3,10^4,10^5,10^6\}$. We estimate $\thx$ with the MLE. Using $\epsilon=1$, we  clamp and add Laplace noise to the sufficient statistics to obtain our privatized summary statistics (see Section S2 of the Supplementary Material for details), 
		%as described in Section S2 of the Supplementary Materials, 
		and then obtain $\hat \theta_{DP}$  by maximizing the log-likelihood with the privatized values plugged in for the sufficient statistics. We sample $Z_1,\ldots, Z_n \iid f_{\hat \theta_{DP}}$ and calculate the MLE $\thz$. We draw $(Y_i)_{i=1}^n$ from Algorithm \ref{alg:onestep} using $\hat\theta_{DP}$ in place of $\thx$.  In Figure \ref{fig:beta}, we plot the mean squared $\ell_2$ error between each estimate of $\theta$ from the true value $(5,3)$, over 200 replicates. From the log-scale plot, we see that $\hat\theta_{DP}$ and $\thy$ have the same asymptotic performance as the MLE, whereas $\thz$ has inflated variance. See Example \ref{s:log-linear} for more explanation of this interpretation. Section S.4.1 of the Supplementary Material contains additional simulations where the value of $\epsilon$ is varied. 
	%	\end{ex}
		
		%%%%%%%%%%%%%%%
		\subsection{DP two sample proportion test}\label{s:DPtesting}

		\begin{figure}[t]
		\begin{subfigure}[t]{0.48\textwidth}
			\includegraphics[width=\textwidth]{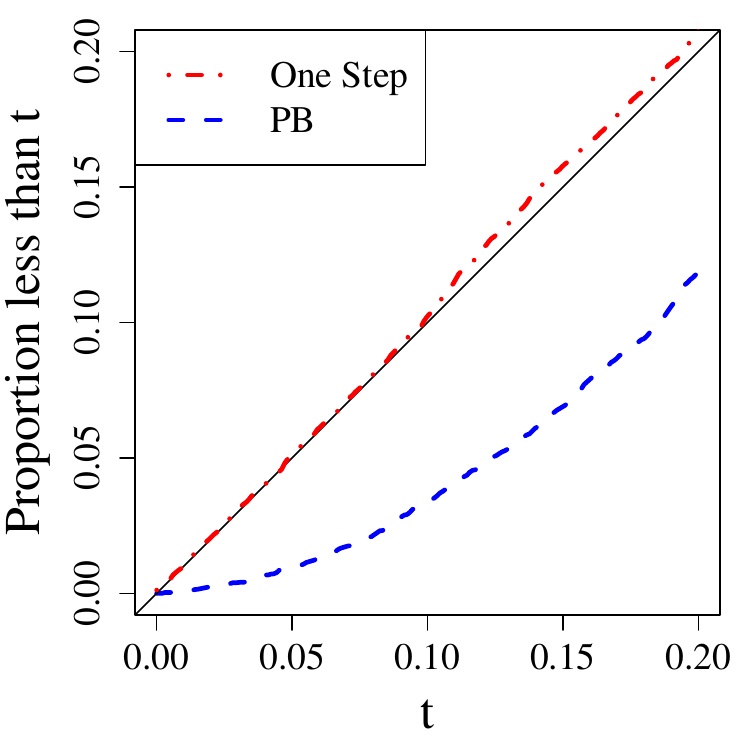}
		\caption{Under the null  hypothesis, the empirical cumulative distribution of both tests. $\theta_X=\theta_Y=.3$. Results based on 10,000 replicates.  }
			\label{fig:pvalue}
			\end{subfigure}
				\hspace{.02\textwidth}
		  \begin{subfigure}[t]{0.48\textwidth}
	%	\raisebox{0pt}{
		\includegraphics[width=\textwidth]{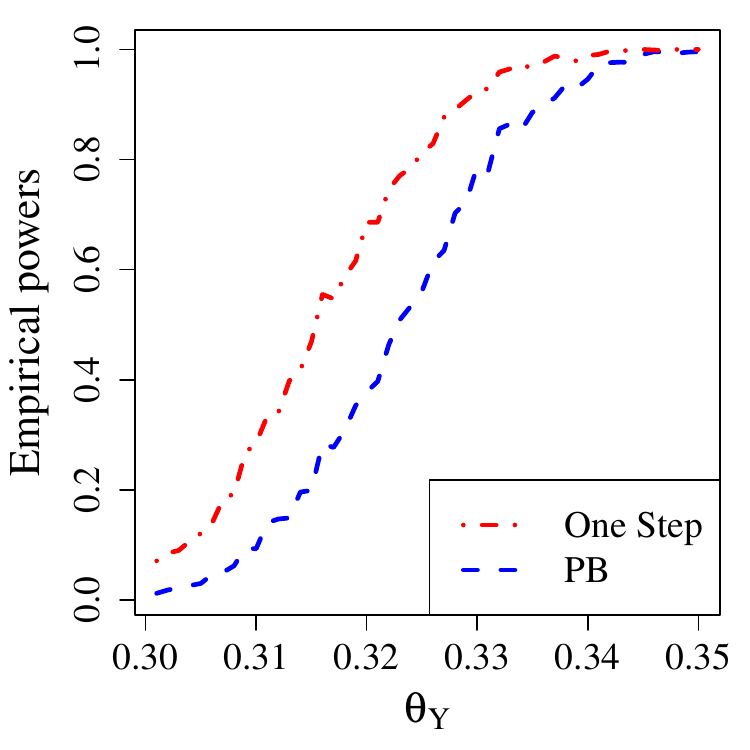}%}
		\caption{Empirical power at type I error $.05$. $\theta_X=.3$, $\theta_Y$ is varied along the $x$-axis. Results averaged from 10,000 replicates for each value of $\theta_Y$.}
		\label{fig:power}
		\end{subfigure}
		\caption{Simulations for the DP two sample proportion test of Section \ref{s:DPtesting}. In red is the one-step test, and in blue is the parametric bootstrap test (abbreviated as PB). Sample sizes are $n=m=200$, privacy parameter is $\ep=1$, and type I error is $.05$.}
		\label{DP2prop}
		\end{figure}

    In this section, we illustrate how the one-step samples can be used to perform approximate hypothesis tests. We base the simulation on a problem in differential privacy, where we are given only access to DP summary statistics and are tasked with testing a hypothesis on the generating distribution for the (missing) private data. Such settings result in complex distributions which can be difficult to work with directly, making MCMC methods such as in \citet{barber2020testing} cumbersome and potentially intractable. We show through a simulation that the one-step samples give highly accurate $p$-values and improved power compared to the parametric bootstrap. 
	
	%	\begin{ex}[Hypothesis Test on DP Two-Sample Proportion]
		Suppose we have two independent samples of binary data, one from a ``control population'' and another from a ``treatment population.'' We denote $X_1,\ldots, X_n\iid \mathrm{Bern}(\theta_X)$ as the control sample, and $Y_1,\ldots, Y_m \iid \mathrm{Bern}(\theta_Y)$ as the treatment sample. Note that $m$ used in this example is not related to the dimension of $\Omega$, as stated in (R0).
		
		It was shown in \citet{awan2018differentially} and \citet{awan2020differentially} that the \emph{Tulap} distribution is the optimal DP mechanism to generate uniformly most powerful hypothesis tests and uniformly most accurate confidence intervals for Bernoulli data. Thus, to satisfy $\ep$-differential privacy, the data curators release the following noisy statistics: $\twid X = \sum_{i=1}^n X_i + N_1$ and $\twid Y = \sum_{i=1}^m Y_i + N_2$, where $N_1,N_2 \iid \mathrm{Tulap}\{0,\exp(-\ep),0\}$ and the sample sizes $m$ and $n$ are released without modification. { Recall that in the $\mathrm{Tulap}(m,b,q)$ distribution, $m\in \RR$ is a location parameter, $b\in (0,\infty)$ is related to the scale/shape with higher values increasing the dispersion, and $q$ is a truncation parameter with $0$ indicating that no truncation takes place. In our case, }
		%\footnote{The Tulap distribution was proposed in  \citet{awan2018differentially} and \citet{awan2020differentially} as the optimal mechanism for Bernoulli data to generate uniformly most powerful hypothesis tests and uniformly most accurate confidence intervals under differential privacy.}.
		%Recall that 
		$N_i \overset d=  G_1-G_2+U$, where $G_1,G_2 \iid \mathrm{Geom}\{1-\exp(-\ep)\}$ and $U\sim \mathrm{Unif}(-1/2,1/2)$. We can think of $\twid X$ and $\twid Y$ as noisy counts for the control and treatment groups, respectively. 
		
		Based only on the privatized summary statistics $\twid X$ and $\twid Y$, we are to test  $H_0: \theta_X= \theta_Y$ versus $H_1: \theta_X{ <} \theta_Y$. Without privacy, there exists a uniformly most powerful test, which is constructed by conditioning on the total number of ones: $\sum_{i=1}^n X_i + \sum_{j=1}^m Y_j$, a complete sufficient statistic under the null hypothesis. However, with the noisy counts, it can be verified that there is not a low-dimensional sufficient statistic. On the other hand, an efficient estimator for $\theta_X=\theta_Y$ under the null hypothesis is $\hat\theta(\twid X,\twid Y) = \min[\max\{(\twid X + \twid Y)/(m+n),0\},1]$. Note that deriving the exact distribution of $(\twid X,\twid Y)\mid \hat\theta(\twid X,\twid Y)$ is fairly complex, involving the convolution of distributions. However, the one-step method can easily produce approximate samples from this conditional distribution. In what follows, we use the one-step algorithm, given in Algorithm \ref{alg:onestep} and investigate the properties of a hypothesis test based on this conditional distribution in comparison with a parametric bootstrap test.
		
		Recall that without privacy, the uniformly most powerful test uses the test statistic $Y$, and threshold computed from the conditional distribution of $Y|X+Y$ under the null hypothesis. With privacy, we use the test statistic $\twid Y$,  the noisy count of ``ones'' in the treatment group, and compute the $p$-values based on $\twid Y \mid \hat \theta(\twid X,\twid Y)$. In particular, we compare the performance of this test versus the parametric bootstrap test, which uses the test statistic $\twid Y$ based on the approximate sampling distribution, which is the convolution of $\mathrm{Binom}\{m,\hat\theta(\twid X,\twid Y)\}$ and $\mathrm{Tulap}\{0,\exp(-\ep),0\}$. 
		
		For the simulation, we use the sample size $n=m=200$, fix $\theta_X=0.3$, set the privacy parameter to $\ep=1$, and base the simulation on 10,000 replicates.  Under the null hypothesis, where $\theta_Y=\theta_X$, we plot the empirical cumulative distribution function (CDF) for the $p$-values of the  proposed test as well as for the parametric bootstrap test in Figure \ref{DP2prop}(a). Recall that a properly calibrated $p$-value will have the CDF of $U(0,1)$. We see that the empirical CDF of the $p$-values for the one-step test closely approximate the ideal CDF, whereas the $p$-values of the parametric bootstrap test are overly conservative.
		
		Next, we study the power of the one-step test versus the parametric bootstrap test in Figure \ref{DP2prop}(b). For this simulation, we set $n=m=200$, fix $\theta_X=.3$, and set $\ep=1$. We vary the value of $\theta_Y$ along the $x$-axis by increments of $.001$ and plot the empirical power of the two tests, averaged over 10,000 replicates for each value of $\theta_Y$. We see that the one-step test offers a considerable increase in power over the bootstrap test. Section S.4.2 of the Supplementary Material contains additional simulations where the value of $\epsilon$ is varied.

	%%%%%%%%%%%%%%%%%%%%%%%%%%%%%%%%%%%%%%%%%%%%%%%%%%%%%%%%%%%%%%%%%%%%%%%%%%%%
	%%%   CONCLUSIONS
	%%%%%%%%%%%%%%%%%%%%%%%%%%%%%%%%%%%%%%%%%%%%%%%%%%%%%%%%%%%%%%%%%%%%%%%%%%%%%
	\section{Discussion} \label{s:conclusions}
	We proposed a simple method of producing synthetic data from a parametric model, which approximately preserves efficient statistics.  We also provided evidence in Section \ref{s:distribution} that the one-step synthetic data results in a distribution which approaches the true underlying distribution. Both of these properties are in contrast to the common approach of sampling from a fitted model, which we showed results in inefficient estimators and inconsistent synthetic data. Our one-step approach is also widely applicable to parametric models and is both easily implemented and highly computationally efficient. It also allows for both partially synthetic data, as well as differentially private fully synthetic data by incorporating a DP efficient estimator. 
    
     Besides synthetic data, there is also promise for using the one-step approach for hypothesis tests as well. \citet{barber2020testing} showed that one can produce powerful and accurate hypothesis tests by conditioning on an efficient statistic for the null model. However, their approach is likelihood-based and requires MCMC methods for implementation. In problems with high-dimensional latent variables such an approach is inapplicable, for example in differential privacy where the entire private database is latent.  The one-step approach offers a computationally efficient alternative, which we demonstrated in Section \ref{s:DPtesting} gives a more accurate and powerful test than the parametric bootstrap. %We also showed in Section \ref{s:regression} that the one-step method has a much improved computational complexity compared to the method of \citet{barber2020testing}, even in the simple case of linear regression. 

    %Another strength of our approach is that it only requires the ability to estimate parameters and sample from the model. This is great for usability as many practitioners can easily implement Algorithm \ref{alg:numerical}, but may not have the expertise to implement a customized MCMC procedure.
    
    In Section \ref{s:distribution}, we studied the distributional properties of a modified version of the one-step method, which suggested that the one-step synthetic data converges to the true joint distribution as the sample size increases. 	We also saw in Section  \ref{s:burr} that in the case of the Burr distribution, the Kolmogorov-Smirnov test cannot distinguish between our synthetic sample and the true distribution which generated the original sample, supporting the conjecture that the one-step sample is consistent. In future work, we propose to formally prove that the total variation distance between the one-step samples { (rather than an approximation) and the true sampling distribution} goes to zero as the sample sizes goes to infinity. This will provide additional theoretical justification for using the one-step method for synthetic data and hypothesis tests. { Future researchers may investigate whether the data-augmentation MCMC algorithm of \citet{ju2022data} could enable the approach of \citet{barber2020testing} to be applied to DP problems.}
    
    While our approach was focused on parametric models, similar to Theorem \ref{thm:TVnaive}, there is a  loss of ``efficiency'' when sampling from a non-parametric model as well. Similar to the approach in this paper, it may be advantageous to sample from a non-parametric model conditional on the sample having ``similar'' estimates as the original data. An interesting future direction would be to formalize and investigate this direction.

    %This suggests that the marginal distribution of the output of Algorithm \ref{alg:numerical} is very similar to that of the original sample. Future work should investigate ways of quantifying this observation.
    
    %While the focus of the paper is on asymptotics, we saw in our simulations that our approach offers high finite-sample utility as well. However, as our approach is a ``one-step'' procedure, using an iterated version could improve finite-sample utility. Such an iterated algorithm is included in the Supplementary Materials, including a parameter for momentum for improved convergence. We found in additional simulations that this iterated version  improves the utility in small samples at an increased computational cost. Other finite-sample improvements to our approach are worth investigating. 

    %One of the main limitations of our approach is that it requires a fully parametric model, which may be impractical, especially with high-dimensional and  complex datasets. While it is important to investigate non-parametric approaches to generate synthetic data, there will always be some information-loss as non-parametric estimators are not efficient.
    
    \section*{Supplementary Material}
    Section S1 of the Supplementary Materials includes a brief introduction to differential privacy. Section S2 gives details for the derivation of the privatized beta estimates of Section \ref{s:beta}. Proofs and technical details are provided in Section S3. Section S4 contains some additional simulation results.
    
    \section*{Acknowledgments}
    { The first author was supported in part by NSF grants SES-1853209 and SES-2150615.} The authors are very grateful to Roberto Molinari, Matthew Reimherr, and Aleksandra Slavkovi\'c for several helpful discussions, as well as feedback on early drafts.

%% use bibfile 
\bibliographystyle{chicago}      % Chicago style, author-year citations
\bibliography{bibliography}   % name your BibTeX data base

%\bibliography{bibliography.bib}
\centerline{\bf Supplementary Material}
\vspace{.55cm}
\fontsize{9}{11.5pt plus.8pt minus .6pt}\selectfont
\noindent

\par

\setcounter{section}{0}
\setcounter{equation}{0}
\def\theequation{S\arabic{section}.\arabic{equation}}
\def\thesection{S\arabic{section}}

\fontsize{12}{14pt plus.8pt minus .6pt}\selectfont
\renewcommand\thefigure{S.\arabic{figure}}    
Section S1 includes a brief introduction to differential privacy. Section S2 gives details for the derivation of the privatized beta estimates of Section 6.3. Proofs and technical details are provided in Section S3. Section S4 contains some additional simulation results.
	\section{Background on Differential Privacy}\label{s:dp}
	
	In this section, we review the basics of differential privacy (DP), which was proposed by \citet{dwork2006calibrating} as a framework to mathematically quantify the degree of privacy protection. To satisfy differential privacy, a method must introduce additional randomness into the analysis, and the constraint of DP requires that for all possible databases, the change in one person's data does not significantly change the distribution of outputs. Consequently, having observed the DP output, an adversary cannot accurately determine the input value of any single person in the database. Definition \ref{def:dp} gives a formal definition of DP. In Definition \ref{def:dp}, $h: \mscr X^n \times \mscr X^n \rightarrow \ZZ^{\geq 0}$ represents the \emph{Hamming metric}, defined by $h(\ul x,\ul x') =\# \{i \mid x_i\neq x'_i\}$. 
	
%	The \emph{Hamming} metric on $\mscr X^n$ is $H: \mscr X^n \times \mscr X^n \rightarrow \ZZ^{\geq 0}$, defined by $H(\ul x,\ul x') =\# \{i \mid x_i\neq x'_i\}$. 
	
	\begin{definition}
		[Differential privacy: \citet{dwork2006calibrating}]\label{def:dp}
		Let the \emph{privacy parameter} $\ep>0$ and the sample size $n\in \{1,2,\ldots\}$ be given. Let $\mscr X$ be any set, and $(\mscr Y, \mscr S)$  a measurable space. Let $\mscr M = \{M_{\ul x} \mid \ul x\in \mscr X^n\}$ be a set of probability measures on $(\mscr Y, \mscr S)$, which we call a \emph{mechanism}. We say that $\mscr M$ satisfies \emph{$\ep $-differential privacy} ($\ep$-DP) if $M_{\ul x}(S) \leq e^\ep M_{\ul x'}(S)$ for all $S\in \mscr S$ and all $\ul x,\ul x' \in \mscr X^n$ such that $h(\ul x,\ul x')=1$.
	\end{definition}

	An important property of differential privacy is that it is invariant to post-processing. Applying any data-independent procedure to the output of a DP mechanism preserves $\ep$-DP \citep[Proposition 2.1]{dwork2014algorithmic}. Furthermore, \citet{smith2011privacy} demonstrated that under conditions similar to (R1)-(R3), there exist efficient DP estimators for parametric models. Using these techniques, the one-step procedure can produce DP synthetic data by using a DP efficient statistic.

	\begin{rem}
		Besides Definition \ref{def:dp}, there are many other variations of differential privacy, the majority of which are relaxations of Definition \ref{def:dp}, which also allow for efficient estimators. For instance, approximate DP \citep{dwork2006our}, concentrated DP \citep{Dwork2016:ConcentratedDP,Bun2016ConcentratedDP}, truncated-concentrated DP \citep{bun2018composable}, Renyi DP \citep{Mironov2017}, and Gaussian DP \citep{dong2022gaussian} all allow for efficient estimators. On the other hand, local differential privacy \citep{kasiviswanathan2011,Duchi2013FOCS} in general does not permit efficient estimators and would not fit in our framework.  For an axiomatic treatment of formal privacy, see \citet{Kifer2012axiomatic}.
	\end{rem}

	One of the earliest and simplest privacy mechanisms is the \emph{Laplace mechanism}. 
	%While there are some general methods of producing efficient DP parameter estimates, such as in \citet{smith2011privacy}, often these approaches do not perform well in practical sample sizes. We demonstrate our approach using a modification of the standard \emph{Laplace mechanism}. 
	Given a statistic $T$, the Laplace mechanism adds independent Laplace noise to each entry of the statistic, with scale parameter proportional to the \emph{sensitivity} of the statistic. Informally, the sensitivity of $T$ is the largest amount that $T$ changes, when one person's data is changed in the dataset. 
	
	\begin{prop}[Sensitivity and Laplace Mechanism: \citet{dwork2006calibrating}]\label{prop:laplace}
		Let the privacy parameter $\ep>0$ be given, and let $T: \mscr X^n \rightarrow \RR^p$ be a statistic. The \emph{$\ell_1$-sensitivity} of $T$ is  
		$\Delta_n(T) = \sup \lVert T(\ul x) - T(\ul x')\rVert_1,$ 
		where the supremum is over all $\ul x,\ul x'\in \mscr X^n$ such that $h(\ul x,\ul x')=1$.  
		Provided that $\Delta_n(T)$ is finite, 
		releasing the vector $\left\{T_j(\ul x) + L_j\right\}_{j=1}^p$ satisfies $\ep$-DP, where 
		$L_1,\ldots, L_p \iid \mathrm{Laplace}\left\{\Delta_n(T)/\ep\right\}$.
	\end{prop}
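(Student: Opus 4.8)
The plan is to check the $\ep$-DP inequality of Definition \ref{def:dp} directly, by bounding the pointwise ratio of the two output densities with respect to Lebesgue measure on $\RR^p$. First I would write down the density of the mechanism $M_{\ul x}$: since $L_1,\ldots,L_p \iid \mathrm{Laplace}\{\Delta_n(T)/\ep\}$ are independent, the released vector $\{T_j(\ul x)+L_j\}_{j=1}^p$ has density
\[
p_{\ul x}(y) \;=\; \prod_{j=1}^p \frac{\ep}{2\Delta_n(T)}\exp\!\left(-\frac{\ep\,|y_j-T_j(\ul x)|}{\Delta_n(T)}\right),\qquad y\in\RR^p.
\]

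Next, fix neighbors $\ul x,\ul x'\in\mscr X^n$ with $h(\ul x,\ul x')=1$ and form the ratio $p_{\ul x}(y)/p_{\ul x'}(y)$. The normalizing constants cancel, leaving $\exp\!\big\{\ep\,\Delta_n(T)^{-1}\sum_{j=1}^p\big(|y_j-T_j(\ul x')|-|y_j-T_j(\ul x)|\big)\big\}$. The one substantive step is the coordinatewise reverse triangle inequality $|y_j-T_j(\ul x')|-|y_j-T_j(\ul x)|\le |T_j(\ul x)-T_j(\ul x')|$, which gives $p_{\ul x}(y)/p_{\ul x'}(y)\le \exp\!\big\{\ep\,\Delta_n(T)^{-1}\lVert T(\ul x)-T(\ul x')\rVert_1\big\}$. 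Since $\Delta_n(T)$ is by definition the supremum of $\lVert T(\ul x)-T(\ul x')\rVert_1$ over all Hamming-neighbors, this bound is at most $e^{\ep}$ for every $y\in\RR^p$.

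Finally, I would integrate the pointwise bound: for any measurable $S$,
\[
M_{\ul x}(S) \;=\; \int_S p_{\ul x}(y)\,dy \;\le\; e^{\ep}\int_S p_{\ul x'}(y)\,dy \;=\; e^{\ep}\,M_{\ul x'}(S),
\]
which is exactly the condition in Definition \ref{def:dp}. The argument is elementary and I do not expect a real obstacle; the only degenerate case is $\Delta_n(T)=0$, where $T$ is constant on $\mscr X^n$ so the conclusion is immediate (and one typically just excludes it, since the Laplace scale is then ill-defined).
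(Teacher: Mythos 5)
Your proposal is correct: the pointwise density-ratio bound via the reverse triangle inequality, followed by integration over measurable sets (with the degenerate case $\Delta_n(T)=0$ handled separately), is exactly the classical proof of the Laplace mechanism. The paper does not prove this proposition itself — it is stated as a cited result from \citet{dwork2006calibrating} — and your argument matches the standard proof given there, so there is nothing further to reconcile.
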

	
	\section{Deriving an Efficient DP Estimator for the Beta Distribution}\label{s:betaProof}

		We assume that $X_1,\ldots, X_n \iid \mathrm{Beta}(\alpha,\beta)$, where $\alpha,\beta\geq 1$, and our goal is to produce differentially private (DP) synthetic data.  Recall that $X_i$ takes values in $[0,1]$ and has pdf $f_X(x) = x^{\alpha-1}(1-x)^{\beta-1}/B(\alpha,\beta)$, where $B$ is the Beta function.

		Often, to ensure finite sensitivity, the data are clamped to artificial bounds $[a,b]$, introducing bias in the DP estimate. Naive bounds are fixed in $n$, resulting in asymptotically negligible noise, but $O_p(1)$ bias. However, we show that it is possible to increase the bounds in $n$ to produce both noise and bias of order $o_p(n^{-1/2})$, resulting in an efficient DP estimator. We show through simulations that using this estimator along with Algorithm 1 results in a DP sample with optimal asymptotics. While we work with the beta distribution, this approach may be of value for other exponential family distributions as well.  We note that the asymptotics of clamping bounds have appeared in other DP works, but which are not immediately applicable to our setting (e.g., \citealp{smith2011privacy,kamath2020private}).

		Recall that  $n^{-1} \sum_{i=1}^n \log(X_i)$ and $n^{-1}\sum_{i=1}^n \log(1-X_i)$ are sufficient statistics for the beta distribution. We will add Laplace noise to each of these statistics to achieve differential privacy. However, the sensitivity of these quantities is unbounded. First we pre-process the data by setting 
		$\twid X_i = \min\{\max(X_i,t),1-t\},$ 
		where $t$ is a threshold that depends on $n$. Then the $\ell_1$-sensitivity of the pair of sufficient statistics is
		$\Delta(t) =2n^{-1}\left|\log(t)-\log(1-t)\right|$. 
		We add independent noise to each of the statistics from the distribution $\mathrm{Laplace}\{\Delta(t)/\epsilon\}$, which results in $\ep$-DP versions of these statistics. Finally, we estimate $\theta=(\alpha,\beta)$ by plugging in the privatized sufficient statistics into the log-likelihood function and maximizing over $\theta$. The resulting parameter estimate satisfies $\ep$-DP by post-processing. 
		
		We must carefully choose the threshold $t$ to ensure that the resulting estimate is efficient. The choice of $t$ must satisfy $\Delta(t) = o(n^{-1/2})$ to ensure that the noise does not affect the asymptotics of the  likelihood function. We also require that both $P(X_i<t)=o(n^{-1/2})$, and $P(X_i>1-t)=o(n^{-1/2})$ to ensure that $\twid X_i =X_i + o_p(n^{-1/2})$, which limits the bias to  $o_p(n^{-1/2})$. For the beta distribution, we can calculate that $P(X_i<t) = O(t^\alpha)$ and $P(X_i>1-t) = O(t^\beta)$. Since we assume that $\alpha,\beta\geq 1$, so long as $t=o(n^{-1/2})$ the probability bounds will hold. Taking $t = \min[1/2,10/\{\log(n) \sqrt n\}]$ satisfies $t=o(n^{-1/2})$, and we estimate the sensitivity as
		\[\Delta(t)\leq 2n^{-1} \log(t^{-1})\leq 2n^{-1}\log\{\log(n)\sqrt n\} = O\{\log(n)/n\} = o(n^{-1/2}),\] 
		which satisfies our requirement for $\Delta$. While there are other choices of $t$ which would satisfy the requirements, our threshold was chosen to optimize the finite sample performance, so that the asymptotics could be illustrated with smaller sample sizes.
	
	%%%%%%%%%%%%%%%%%%%%%%%%%%%%%%%%%%%%%%%%%%%%%%%%%%%%%%%%%%%%%%%
	%%%   PROOFS
	%%%%%%%%%%%%%%%%%%%%%%%%%%%%%%%%%%%%%%%%%%%%%%%%%%%%%%%%%%%%%%%%
	\section{Proofs and Technical Lemmas}\label{s:proofs}

	For two distributions $P$ and $Q$ on $\RR^k$, the \emph{Kolmogorov-Smirnov distance} (KS-distance) is $\mathrm{KS}(P,Q) = \sup_{R\text{ rectangle}}|P(R)-Q(R)|$, where the supremum is over all axis-aligned rectangles. If $F_P$ and $F_Q$ are the multivariate cdfs of $P$ and $Q$, then $\lVert F_P-F_Q\rVert_\infty \leq \mathrm{KS}(P,Q)\leq 2^k\lVert F_P-F_Q\rVert_\infty$, so convergence in distribution is equivalent to convergence in KS-distance \citep{smith2011privacy}. By definition, we have that $\mathrm{TV}(P,Q)\geq \mathrm{KS}(P,Q)$.

	%with multivariate cdfs $F_P$ and $F_Q$, the \emph{Kolmogorov-Smirnov distance} (KS-distance) is $\mathrm{KS}(P,Q) = \lVert F_P-F_Q\rVert$. Recall that convergence in distribution is equivalent to convergence in KS-distance, and note that $\mathrm{KS}(P,Q)\leq \mathrm{TV}(P,Q)$. 
	%\thmTVnaive*
	\begin{proof}[Proof of Theorem 1]
    Call $H_n(\theta)$ be the distribution of $\sqrt n\{\hat\theta(\ul Y)-\theta\}$ when $\hat\theta(\ul Y)$ is based on $Y_1,\ldots,Y_n\iid f_\theta$, and note that because $\hat\theta$ is an efficient estimator, we have that $H_n(\theta)\rightarrow N\{0,I^{-1}(\theta)\}$. 
    By the Skorohod Representation Theorem \citep[Section 1.6.3]{serfling2009approximation}, there exists $U_1\sim U(0,1)$ and measurable functions $A, A_n:[0,1] \rightarrow \RR^d$ for all $n\in\mathbb{N}$ such that     
    1) $A_n(U_1)\sim H_n(\theta_0)$ for all $n\in \mathbb{N}$, 2) $A(U_1) \sim N\{0, I^{-1}(\theta_0)\}$, and 3) $A_n(U_1) \overset {a.s.}\rightarrow A(U_1)$. 
    %\begin{itemize}
	%\item $A_n(U_1)\sim H_n(\theta_0)$ for all $n\in \mathbb{N}$, 
	%\item $A(U_1) \sim N\{0, I^{-1}(\theta_0)\}$, and 
	%\item $A_n(U_1) \overset {a.s.}\rightarrow A(U_1)$.
    %\end{itemize}
    This implies that $A_n(U_1) \overset d = \sqrt n\{\hat\theta(\ul X)-\theta_0\}$ or equivalently that $\theta_0+A_n(U_1)/\sqrt n \overset d =\hat\theta(\ul X)$.

Call $S=\{u_1\in [0,1]| A_n(u_1)\rightarrow A(u_1)\}$, which satisfies $P(U_1\in S)=1$. Then for any $u_1\in S$, $A_n(u_1)$ is a convergent sequence which may be used in the role of $h_n$ in the definition of LAE. So, using the fact that $\hat\theta$ is LAE, we will apply the Skorohod Representation Theorem to the sequence of conditional random variables,
\[\sqrt n \{\hat \theta(\ul Z)-\hat\theta(\ul X)\}|\{\hat\theta(\ul X)=\theta_0+A_n(u_1)/\sqrt n\}\sim H_n\{\theta_0+A_n(u_1)/\sqrt n\}.\]
Let $U_2\sim U(0,1)$, independent of $U_1$. For all $u_1\in S$, there exists measurable functions $B,B^{A_n(u_1)}_n:[0,1]\rightarrow \RR^d$ for all $n\in \mathbb{N}$ which satisfy  
1) $B_n^{A_n(u_1)}(U_2) \sim H_n\{\theta_0+A_n(u_1)/\sqrt n\}$ for all $n \in \mathbb{N}$,
    2)  $B(U_2) \sim N\{0,I^{-1}(\theta_0)\}$, and 
    3) $B_n^{A_n(u_1)}(U_2) \overset {a.s.}\rightarrow B(U_2)$. 
%\begin{itemize}
%    \item  $B_n^{A_n(u_1)}(U_2) \sim H_n\{\theta_0+A_n(u_1)/\sqrt n\}$ for all $n \in \mathbb{N}$,
%    \item  $B(U_2) \sim N\{0,I^{-1}(\theta_0)\}$, and 
%    \item $B_n^{A_n(u_1)}(U_2) \overset {a.s.}\rightarrow B(U_2)$.
%\end{itemize}
Note that since $P(U_1\in S)=1$, the above points about the $B_n$'s still hold when replacing $u_1$ with $U_1$ and considering the randomness over both $U_1$ and $U_2$. 

By construction, we have that $A_n(U_1)\overset d = \sqrt n \{\hat\theta(\ul X)-\theta_0\}$ as well as $B_n^{A_n(u_1)}(U_2) \overset d = \sqrt n\{\hat\theta(\ul Z)-\hat\theta(\ul X)\}|\left\{\hat\theta(\ul X)=\theta_0+A_n(u_1)/\sqrt n\right\}$. Thus, the following joint distributions agree:
\[\left(\begin{array}{c} A_n(U_1)\\ B_n^{A_n(U_1)}(U_2)\end{array}\right)
\overset d=\left(\begin{array}{c} \sqrt n \{\hat \theta(\ul X)-\theta_0\}\\ \sqrt n \{\hat\theta(\ul Z)-\hat\theta(\ul X)\}\end{array}\right).\]
By the continuous mapping theorem, we have that 
%\[A_n(U_1)+B_n^{A_n(U_1)}(U_2) \overset d =\sqrt n\{\hat\theta(\ul X)-\theta_0\}+\sqrt n\{\hat\theta(\ul Z)-\hat\theta(\ul X)\}=\sqrt n \{\hat\theta(\ul Z)-\theta_0\}.\]
%Since almost sure convergence is preserved under continuous maps, such as addition, we have that 
\begin{align*}
    \sqrt n (\hat\theta(\ul Z)-\theta_0)&=\sqrt n\{\hat\theta(\ul X)-\theta_0\}+\sqrt n\{\hat\theta(\ul Z)-\hat\theta(\ul X)\}\\
    &\overset d= A_n(U_1)+B_n^{A_n(U_1)}(U_2)\\
					&\overset{a.s.}\rightarrow A(U_1)+B(U_2)\\
					&\overset d= N\{0,2I^{-1}(\theta_0)\},
\end{align*}
where in the last step, we used the fact that $A(U_1),B(U_2) \iid N\{0,I^{-1}(\theta_0)\}$, since $U_1$ and $U_2$ are independent. This concludes the proof for part 1.
	%First we will establish the asymptotic distribution of $\thz$. Recall that by efficiency, we know that $\sqrt n (\thx - \theta) \overset d \rightarrow N\{0,I^{-1}(\theta)\}$ and $\sqrt n \{\thz - \EE( \thz\mid \thx)\}\mid \thx \overset d \rightarrow N\{0,I^{-1}(\thx)\}$. Then by Slutsky's theorem, we have that $\sqrt n \{\thz-\EE (\thz\mid \thx)\} \overset d \rightarrow N\{0, I^{-1}(\theta)\}$. We can easily compute that $\cov\{\thz - \EE(\thz\mid \thx), \thx\}=0$ using the law of total covariance. So, we have that $\sqrt n \{\thz - \EE(\thz\mid \thx)+\thx-\theta\} \overset d \rightarrow N\{0, 2I^{-1}(\theta)\}$. We also know that  $\sqrt n \{\EE(\thz\mid \thx) - \thx\} = o_p(1)$, since $\EE(\thz\mid \thx) = \thx+o_p(n^{-1/2})$. Together, we have that $\sqrt n (\thz-\theta) \overset d \rightarrow N\{0, 2I^{-1}(\theta)\}$. 

%For two $d$-dimensional distributions $P$ and $Q$, $\mathrm{KS}(P,Q) = \sup_{R \text{ rectangle}}(P(R)-Q(R))$, where the supremum is over all axis-aligned rectangles. Note that for any distributions $\mathrm{TV}(P,Q)\geq \mathrm{KS}(P,Q)$. Note further that convergence in KS-distance is equivalent to convergence of the multivariate cdfs of $P$ and $Q$, since $\lVert F_P-F_Q\rVert \leq \mathrm{KS}(P,Q)\leq 2^d \lVert F_P-F_Q\rVert$ \citep{smith2011privacy}. So, convergence in distribution is equivalent to convergence in Kolmogorov-Smirnov distance (KS-distance). 

%	Next, we lower bound the total variation using the data processing inequality 
%as well as a lower bound on total variation with the Hellinger distance: $H^2(P||Q)\leq \mathrm{TV}(P||Q)$. 
For part 2, we lower bound the total variation distance between the distributions of $\ul X$ and $\ul Z$ as follows:
\begin{align}
\mathrm{TV}\left(\ul X,\ul Z\right)&\geq \mathrm{TV}\left\{\sqrt n(\thx-\theta_0),\sqrt n (\thx-\theta_0)\right\}\label{eq:process}\\
&\geq \mathrm{KS}\left\{\sqrt n(\thx-\theta_0),\sqrt n (\thx-\theta_0)\right\}\label{eq:ks}\\
&\geq\mathrm{KS}\left[N\{0,I^{-1}(\theta_0)\},N\{0,2I^{-1}(\theta_0)\}\right]\\
&\phantom{=}-\mathrm{KS}\left[\sqrt n (\thx-\theta_0),N\{0, I^{-1}(\theta_0)\}\right]\\
&\phantom{=} -\mathrm{KS}\left[\sqrt n(\thz-\theta_0),N\{0,2I^{-1}(\theta_0)\}\right]\label{eq:triangle}\\
&= \mathrm{KS}\left[N\{0,I^{-1}(\theta_0)\},N\{0,2I^{-1}(\theta_0)\}\right]+o(1)\label{eq:convD}\\
&\geq \Phi\left\{-\sqrt{\log(4)}/\sqrt 2\right\}-\Phi\left\{-\sqrt{\log(4)}\right\}+o(1)\label{eq:lower}\\
&\geq .083+o(1)%&\geq \mathrm{KS}\left[N\{0,\sigma\},N\{0,2\sigma\}\right]+o(1)
%&\geq H^2\left[N\{0,I^{-1}(\theta)\}||N\{0,2I^{-1}(\theta)\}\right]+o(1)\\
%&\geq \{1-(.971)^k\}^2+o(1)\label{eq:hellinger}\\
%&\geq .00084+o(1),
\end{align}
where \eqref{eq:process} is by the data processing inequality, \eqref{eq:ks} uses the KS-distance as a lower bound on total variation, \eqref{eq:triangle} applies two triangle inequalities since KS-distance is a metric, and \eqref{eq:convD} uses the asymptotic distributions of $\thx$ and $\thz$. 

To establish \ref{eq:lower}, consider the following. Denote $\sigma^2 = (I^{-1}(\theta_0))_{1,1}$. Then consider the sequence of rectangles $R_i = \{x\in \RR^k \mid -(i+1)\sigma\leq x_1\leq -\sqrt{\log(4)}\sigma, \text{ and }-i\leq x_j\leq i, \forall j\neq 1\}$. Note that $R_i\subset R_{i+1}$ and $\bigcup_{i=1}^\infty R_i = \{x\in \RR\mid x_1\leq -\sqrt{\log(4)} \sigma\}$. Denote by $P$ the probability measure for $N\{0,2I^{-1}(\theta_0)\}$ and $Q$ the probability measure for $N\{0,I^{-1}(\theta_0)\}$. Then 

\begin{align*}
    \mathrm{KS}\left[N\{0,I^{-1}(\theta_0)\},N\{0,2I^{-1}(\theta_0)\}\right]
    &\geq \lim_{i\rightarrow \infty} |P(R_i)-Q(R_i)|\\
    &=\left|\lim_{i\rightarrow \infty} P(R_i)-\lim_{i\rightarrow\infty}Q(R_i)\right|\\
    &=\left|P\left(\bigcup_{i=1}^\infty R_i\right)-Q\left(\bigcup_{i=1}^\infty R_i\right)\right|\\
    &= \Phi\left\{-\sqrt{\log(4)}/\sqrt 2\right\}-\Phi\left\{-\sqrt{\log(4)}\right\}\\
&\geq .083,
\end{align*}
where the value $\sqrt{\log(4)}$ was chosen as it is the maximizer of $\Phi(-t/\sqrt{2})-\Phi(t)$.
%using the threshold $t = (-\sqrt{\log(4)} \sigma,0,\ldots,0)$, where the last $k-1$ entries are zero, we get a lower bound on the KS distance in \eqref{eq:lower}. 
\end{proof}
%\hrule

For the following proofs, we will overload the $\frac{d}{d\theta}$ operator when working with multivariate derivatives. For a function $f:\RR^p\rightarrow \RR$, we write $\frac{d}{d\theta} f(\theta)$ to denote the $p\times 1$ vector of partial derivatives $(\frac{\partial}{\partial \theta_j} f(\theta))_{j=1}^p$. For a function $g:\RR^p\rightarrow \RR^q$, we write $\frac{d}{d\theta} g(\theta)$ to denote the $p\times q$ matrix $(\frac{\partial}{\partial \theta_j} g_k(\theta))_{j,k=1}^{p,q}$.

	Lemmas \ref{lem:score} and \ref{lem:fisher} are used for the proof of Theorem 2. 		Parts 1 and 2 of Lemma \ref{lem:score} can be rephrased as the following: $\hat \theta$ is efficient if and only if it is consistent and $n^{-1}\sum_{i=1}^n S(\hat \theta,X_i) = o_p(n^{-1/2})$. The third property of Lemma \ref{lem:score} is similar to many standard expansions used in asymptotics, for example in \citet{van2000asymptotic}. However, we require the expansion for arbitrary efficient estimators, and include a proof for completeness.
	%%%%%%%%%%%%%%%%%%%%%%%%%%%%%%%%%%%%%
	%%%   Efficient Estimators and Score equation
	%%%%%%%%%%%%%%%%%%%%%%%%%%%%%%%%%%%%%%%%
	\begin{lemma}\label{lem:score}
		Suppose $X_1,\ldots, X_n\iid f_{\theta_0}$, and assume that (R1)-(R3) hold. Let $\hat \theta_X$ be an efficient estimator, which is a sequence of zeros of the score  function. Suppose that $\twid \theta_X$ is a $\sqrt n$-consistent estimator of $\theta_0$. Then 
		\begin{enumerate}
			\item If $n^{-1} \sum_{i=1}^n S(\twid \theta_X, X_i) = o_p(n^{-1/2})$, then $\twid \theta_X - \hat \theta_X = o_p(n^{-1/2})$.
			\item If $\twid \theta_X$ is efficient, then $n^{-1}  \sum_{i=1}^n S(\twid \theta_X, X_i) = o_p(n^{-1/2})$.
			\item If $\twid \theta_X$ is efficient, then $\twid \theta_X = \theta_0 + I^{-1}(\theta_0) n^{-1} \sum_{i=1}^n S(\theta_0,X_i) + o_p(n^{-1/2})$.
		\end{enumerate}
	\end{lemma}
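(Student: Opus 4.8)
The plan is to prove the three parts in order, using standard Taylor expansions of the score function together with the regularity conditions (R1)--(R3). Throughout, write $\hat S_n(\theta) = n^{-1}\sum_{i=1}^n S(\theta, X_i)$ for the averaged score, and recall that by the law of large numbers $\hat S_n(\theta) \to \EE_{\theta_0} S(\theta, X) =: m(\theta)$, with $m(\theta_0)=0$ and $\frac{d}{d\theta}m(\theta_0) = -I(\theta_0)$. For any $\sqrt n$-consistent estimator $\bar\theta$, a first-order Taylor expansion of $\theta\mapsto \hat S_n(\theta)$ around $\theta_0$, using (R2) to control the remainder via the dominating functions $g_{ijk}$ (so that the second-derivative term is $O_p(1)\cdot O_p(\|\bar\theta-\theta_0\|^2) = O_p(n^{-1})$), gives the master expansion
\[
\hat S_n(\bar\theta) = \hat S_n(\theta_0) + \left[\tfrac{1}{n}\sum_{i=1}^n \nabla_\theta S(\theta_0, X_i)\right](\bar\theta - \theta_0) + O_p(n^{-1}),
\]
and since $\tfrac1n\sum_i \nabla_\theta S(\theta_0,X_i) \overset{p}\to -I(\theta_0)$ (using (R3) for positive definiteness) and $\bar\theta-\theta_0 = O_p(n^{-1/2})$, this becomes $\hat S_n(\bar\theta) = \hat S_n(\theta_0) - I(\theta_0)(\bar\theta-\theta_0) + o_p(n^{-1/2})$.

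For part 1, apply this expansion to both $\twid\theta$ and $\hat\theta$. Since $\hat\theta$ is a zero of the score equations, $\hat S_n(\hat\theta)=0$, so $\hat S_n(\theta_0) = I(\theta_0)(\hat\theta-\theta_0) + o_p(n^{-1/2})$; by hypothesis $\hat S_n(\twid\theta) = o_p(n^{-1/2})$, so $\hat S_n(\theta_0) = I(\theta_0)(\twid\theta-\theta_0) + o_p(n^{-1/2})$. Subtracting and multiplying by $I^{-1}(\theta_0)$ (which exists and is $O(1)$ by (R3)) yields $\twid\theta - \hat\theta = o_p(n^{-1/2})$. For part 3, take $\twid\theta$ efficient; then $\sqrt n(\twid\theta-\theta_0)$ is asymptotically normal, hence $O_p(1)$, so $\twid\theta$ is $\sqrt n$-consistent and the master expansion applies. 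Rearranging $\hat S_n(\twid\theta) = \hat S_n(\theta_0) - I(\theta_0)(\twid\theta-\theta_0) + o_p(n^{-1/2})$ gives $\twid\theta - \theta_0 = I^{-1}(\theta_0)\hat S_n(\theta_0) - I^{-1}(\theta_0)\hat S_n(\twid\theta) + o_p(n^{-1/2})$, so part 3 follows immediately once we know $\hat S_n(\twid\theta) = o_p(n^{-1/2})$ — which is exactly part 2.

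So the crux is part 2: an efficient estimator $\twid\theta$ must satisfy $\hat S_n(\twid\theta) = o_p(n^{-1/2})$. The idea is to compare $\twid\theta$ with $\hat\theta$. The MLE/score-zero $\hat\theta$ is itself efficient (under (R1)--(R3) this is classical, e.g.\ \citet{lehmann2004elements}), so both $\sqrt n(\twid\theta-\theta_0)$ and $\sqrt n(\hat\theta-\theta_0)$ converge in distribution to $N(0, I^{-1}(\theta_0))$. By the master expansion applied to each and using $\hat S_n(\hat\theta)=0$, we get $I(\theta_0)\sqrt n(\hat\theta-\theta_0) = \sqrt n\,\hat S_n(\theta_0) + o_p(1)$ and $I(\theta_0)\sqrt n(\twid\theta-\theta_0) = \sqrt n\,\hat S_n(\theta_0) - \sqrt n\,\hat S_n(\twid\theta) + o_p(1)$. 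Thus $\sqrt n\,\hat S_n(\twid\theta) = I(\theta_0)\sqrt n\{(\hat\theta-\theta_0) - (\twid\theta-\theta_0)\} + o_p(1) = I(\theta_0)\sqrt n(\hat\theta - \twid\theta) + o_p(1)$. The main obstacle is showing $\sqrt n(\hat\theta - \twid\theta) = o_p(1)$, i.e.\ that two efficient estimators must be asymptotically equivalent, not merely equal in limiting distribution. This is where I would invoke the convolution theorem / the fact that efficiency forces the asymptotic linear expansion (the influence function) to be $I^{-1}(\theta_0)S(\theta_0,\cdot)$ essentially uniquely: any regular efficient estimator is asymptotically linear with the efficient influence function, hence $\sqrt n(\twid\theta-\theta_0) = I^{-1}(\theta_0)\sqrt n\,\hat S_n(\theta_0) + o_p(1)$, and likewise for $\hat\theta$, so their difference is $o_p(1)$. (Alternatively, if the paper's notion of "efficient" is the weaker CLT-only definition, one would need the stronger assumption that $\twid\theta$ is asymptotically linear, or argue directly; I would flag this and use whichever characterization of efficiency the earlier sections commit to.) Once $\sqrt n(\hat\theta - \twid\theta) = o_p(1)$ is in hand, part 2 and then part 3 follow from the displayed identities above.
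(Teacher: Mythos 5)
Your proposal is correct and follows essentially the same route as the paper: a Taylor expansion of the averaged score, invertibility of $I(\theta_0)$ for part 1, asymptotic equivalence of two efficient estimators for part 2, and a second expansion about $\theta_0$ for part 3. The only difference is at the crux you correctly identified: where you would invoke the convolution theorem/uniqueness of the efficient influence function to obtain $\sqrt n(\twid\theta-\hat\theta)=o_p(1)$, the paper simply cites \citet[Page 479]{lehmann2004elements} for exactly this fact that two efficient estimators differ by $o_p(n^{-1/2})$, which is how it sidesteps the (legitimate) concern you flag about the CLT-only definition of efficiency.
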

	\begin{proof}
		As $\twid \theta_X$ and $\hat \theta_X$ are both $\sqrt n$-consistent, we know that $\twid \theta_X -\hat \theta_X = O_p(n^{-1/2})$. So, we may consider a Taylor expansion of the score  function about $\twid \theta_X = \hat \theta_X$.
		\begin{equation}\begin{split}\label{eq:efficient}
		&n^{-1} \sum_{i=1}^n S(\twid \theta_X, X_i) \\
  &= n^{-1} \sum_{i=1}^n S(\hat \theta_X,X_i) 
  + \left\{\frac{d}{d \theta} n^{-1} \sum_{i=1}^n S(\theta,X_i)\Big|_{\theta=\thx}\right\}(\twid \theta_X - \hat \theta_X) + O_p(n^{-1}) \\
    		&= 0+\left\{ \frac{d}{d\theta} n^{-1} \sum_{i=1}^n S(\theta, X_i)\big|_{\theta=\thx} +  O_p(n^{-1/2})\right\}(\twid \theta_X - \hat \theta_X)\\
		&=\left\{-I(\theta_0) + o_p(1)\right\}(\twid \theta_X - \hat \theta_X),
		\end{split}\end{equation}
		where we used assumptions (R1)-(R3) to justify that 1) the second derivative is bounded in a neighborhood about $\theta_0$ (as both $\hat \theta_X$ and $\twid \theta_X$ converge to $\theta_0$), 2) the derivative of the score converges to $-I(\theta_0)$ by \citet[Theorem 7.2.1]{lehmann2004elements} along with the Law of Large Numbers, and 3) that $I(\theta_0)$ is finite, by (R3). 
		
		To establish property 1, note that the left hand side of Equation \eqref{eq:efficient} is $o_p(n^{-1/2})$ implying that $(\twid \theta_X - \hat \theta_X) = o_p(n^{-1/2})$. For property 2, recall that by \citet[page 479]{lehmann2004elements}, if $\twid \theta_X$ and $\hat \theta_X$ are both efficient, then $(\twid \theta_X - \hat \theta_X) = o_p(n^{-1/2})$. Plugging this into the right hand side of Equation \eqref{eq:efficient} gives $n^{-1} \sum_{i=1}^n S(\twid \theta_X, X_i) = o_p(n^{-1/2})$, establishing property 2.

		For property 3, we consider a slightly different expansion:
		\begin{align*}
		    o_p(n^{-1/2})& = n^{-1} \sum_{i=1}^n S(\twid \theta, X_i)\\ 
		&= n^{-1} \sum_{i=1}^n S(\theta_0,X_i) + \frac d{d\theta_0} n^{-1}\sum_{i=1}^n S(\theta_0,X_i) (\twid \theta - \theta_0) + O_p(n^{-1}),\\
		&=n^{-1} \sum_{i=1}^n S(\theta_0,X_i) +\{- I(\theta_0)+o_p(1)\}(\twid \theta - \theta_0) + O_p(n^{-1})
		\end{align*}
		where we used property 2 for the first equality, expanded the score about $\hat\theta_X = \theta_0$ for the second, and justify the $O_p(n^{-1})$ by (R2). By (R1)-(R2) and Law of Large Numbers along with \citet[Theorem 7.2.1]{lehmann2004elements}, we have the convergence of the derivative of score to $-I(\theta_0)$. By (R3), $I(\theta_0)$ is invertible. Solving the equation for $\twid \theta_X$ gives the desired result.
	\end{proof}
	%\hrule
	
		\begin{lemma}\label{lem:fisher}
		Assume that (R0)-(R4) hold, and let $\omega_1,\ldots, \omega_n \iid P$. Then 
		\begin{align*}
		n^{-1} \sum_{i=1}^n \frac{d}{d\theta}  S\{\theta,X_\theta(\omega_i)\}= o_p(1).
		\end{align*}
	\end{lemma}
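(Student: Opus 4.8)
The plan is to exploit the fact that, by (R0), the map $\omega\mapsto X_\theta(\omega)$ pushes the seed measure $P$ forward to $f_\theta\,d\mu$, so that under $\omega\sim P$ the variable $S\{\theta,X_\theta(\omega)\}$ is distributed exactly as $S(\theta,X)$ with $X\sim f_\theta$. Since $\omega_1,\dots,\omega_n\iid P$ and $\theta$ is a fixed point of $B(\theta_0)$, the summands $\frac{d}{d\theta}S\{\theta,X_\theta(\omega_i)\}$ are i.i.d. matrices, so the conclusion will follow from the weak law of large numbers once I establish (a) that each summand has finite expectation and (b) that this common expectation is the zero matrix.

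First I would prove the expectation identity in (b). The usual zero-mean-of-score argument, justified by the integrability bound $\int g_i(x)\,dx<\infty$ from (R2), gives $\EE_{X\sim f_\theta}[S(\theta,X)]=0$, hence $\EE_{\omega\sim P}[S\{\theta,X_\theta(\omega)\}]=0$ for \emph{every} $\theta$ in the neighborhood $B(\theta_0)$. The key point is that this is an identity in $\theta$, so differentiating both sides and exchanging $\frac{d}{d\theta}$ with the integral $\int_\Omega\cdot\,dP$ yields $\EE_{\omega\sim P}\big[\frac{d}{d\theta}S\{\theta,X_\theta(\omega)\}\big]=0$, the derivative of the identically-zero function. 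Here $\frac{d}{d\theta}$ is the \emph{total} derivative of the composition $\theta\mapsto S\{\theta,X_\theta(\omega)\}$, in which $\theta$ enters both the first slot of the score and, through (R0), the transformation $X_\theta$; by the chain rule it equals the $\theta$-Hessian $\nabla_\theta^2\log f_\theta\{X_\theta(\omega)\}$ plus a cross term built from $\partial_x S$ and $\frac{d}{d\theta}X_\theta(\omega)$. It is worth noting that only the sum has vanishing mean: the Hessian term alone integrates to $-I(\theta)$, and the cross term contributes the compensating $+I(\theta)$.

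The main obstacle is justifying the interchange of the $\theta$-derivative with the expectation over $P$, which reduces to producing a $P$-integrable envelope for $\big\|\frac{d}{d\theta'}S\{\theta',X_{\theta'}(\omega)\}\big\|$ valid uniformly for $\theta'$ in a ball around $\theta$; the same envelope also yields (a). For this I would use the continuous differentiability of $X_\theta$ in $\theta$ over the compact set $\Theta$ from (R0) to control $\frac{d}{d\theta'}X_{\theta'}(\omega)$, together with the third-derivative domination in (R2), which bounds the $\theta$-Hessian of $\log f_{\theta'}$ (equivalently, the $\theta$-derivative of the score) near $\theta_0$, and the finite-expectation conditions $\EE_\theta g_{ijk}(X)<\infty$ to make the resulting bound integrable. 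With (a) and (b) in hand, the weak law of large numbers for i.i.d. summands gives $n^{-1}\sum_{i=1}^n\frac{d}{d\theta}S\{\theta,X_\theta(\omega_i)\}\overset{p}{\rightarrow}0$, which is exactly the asserted $o_p(1)$ statement.
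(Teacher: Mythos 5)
Your proposal is correct and takes essentially the same route as the paper: both arguments reduce the claim to the fact that the i.i.d.\ summands $\frac{d}{d\theta}S\{\theta,X_\theta(\omega_i)\}$ have mean zero and then invoke the law of large numbers, using the same chain-rule decomposition into a Hessian-type term with expectation $-I(\theta)$ and a cross term through $X_\theta$ with expectation $+I(\theta)$. The only cosmetic difference is that you obtain the cancellation by differentiating the identity $\EE_{\omega\sim P}\,S\{\theta,X_\theta(\omega)\}=0$ in $\theta$, whereas the paper computes the two expectations separately (the first via the standard information identity, the second by passing the $\alpha$-derivative onto the pushforward density $f_\alpha$), and both rely on the same derivative--integral interchange that you correctly flag as the technical crux.
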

	 
	\begin{proof}%[Proof of Lemma \ref{lem:fisher}.]
	First we can express the derivative as 
	\begin{align*}
	 &   n^{-1} \sum_{i=1}^n \frac{d}{d\theta}S\{\theta,X_\theta(\omega_i)\}\\
	&=n^{-1} \sum_{i=1}^n \left[ \frac{d}{d\alpha} S\{\alpha,X_{\theta}(\omega_i)\} + \frac{d}{d\alpha}S\{\theta,X_\alpha(\omega_i)\}\right]\Big|_{\alpha=\theta}.
	\end{align*}
	The result follows from the Law of Large Numbers, provided that 
	\[\EE_{\omega \sim P}\left[ \frac{d}{d\alpha} S\{\alpha,X_{\theta}(\omega)\} + \frac{d}{d\alpha}S\{\theta,X_\alpha(\omega)\}\right]\Big|_{\alpha=\theta}=0.\]  The expectation of the first term is $-I(\theta)$, by \citet[Theorem 7.2.1]{lehmann2004elements}. For the second term, we compute
 \begingroup
\allowdisplaybreaks
		\begin{align}
		\EE_{\omega \sim P} \frac{d}{d\alpha} S\{\theta, X_\alpha(\omega)\}\Big|_{\alpha=\theta}
		&=\int_\Omega \frac{d}{d\alpha} S\{\theta,X_\alpha(\omega)\} \Big|_{\alpha=\theta} \pi(\omega)\ d\omega\\
		 &=  \frac{d}{d\alpha}\int_\Omega S\{\theta,X_\alpha(\omega)\} \ \pi(\omega)\ d\omega\Big|_{\alpha=\theta}\label{eq:1}\\
		 &= \frac{d}{d\alpha}\int_{\RR^d}  S(\theta,x) f_\alpha(x)\ dx\Big|_{\alpha=\theta}\label{eq:2}\\
		&= \int_{\RR^d} \frac{d}{d\alpha}S(\theta,x)  f_\alpha(x)\Big|_{\alpha=\theta}\ dx\label{eq:3}\\
		&= \int_{\RR^d} S(\theta,x)\left\{ \frac{d}{d\alpha} f_\alpha(x)\Big|_{\alpha=\theta}\right\}^\top \ dx\\
		&= \int_{\RR^d} S(\theta,x) \left\{\frac{\frac{d}{d\theta} f_\theta(x)}{f_\theta(x)}\right\}^\top f_\theta(x) \ dx\\
		&=\int_{\RR^d} S(\theta,x)S^\top(\theta,x) f_\theta(x) \ dx \\
		&=\EE_{X\sim \theta} \left\{S(\theta,X)S^\top(\theta,X)\right\}\\
		&=I(\theta),
		\end{align}
  \endgroup
		 where for \eqref{eq:1} we use the boundedness of $\Omega$ from (R0) and (R4) to interchange the derivative and integral; for \eqref{eq:2}, we apply a change of variables, using the fact that $f_\alpha(\omega)$ is the density for the random variable $X_\alpha(\omega)$; and for \eqref{eq:3}, we use (R2) and the dominated convergence theorem to change the order of the derivative and integral again.
	\end{proof}
%\hrule

%\thmonestep*
\begin{proof}[Proof of Theorem 2]
We expand $\thz$ about $\thx$ using part 3 of Lemma \ref{lem:score}:
\begin{equation}\label{eq:score0}
    \thz=\thx + I^{-1}\{\thx\} n^{-1} \sum_{i=1}^n S\{\thx,X_{\thx}(\omega_i)\} + o_p(n^{-1/2})
    \end{equation}
  %  &=\thx + \left[I^{-1}(\theta_0)+o_p(1)\right]\left[n^{-1} \sum_{i=1}^n S(\theta_0,X_{\theta_0}(\omega_i) + o_p(n^{-1/2})\right] + o_p(n^{-1/2})\label{eq:score1}\\
  
    % we use that $n^{-1}\sum_{i=1}^n S(\theta_0,X_{\theta_0}(\omega_i))=O_p(n^{-1/2})$,  
   The score can be expanded about $\thx=\theta_0$:
\begin{align*}
&n^{-1} \sum_{i=1}^n S\{\thx,X_{\thx}(\omega_i)\}\\
&=n^{-1} \sum_{i=1}^n S\{\theta_0,X_{\theta_0}(\omega_i)\} + \left[\frac{d}{d\twid\theta} \ n^{-1} \sum_{i=1}^n S\{\twid\theta,X_{\twid\theta}(\omega_i)\}\right]\{\thx-\theta_0\}\\
&=n^{-1} \sum_{i=1}^n S\{\theta_0,X_{\theta_0}(\omega_i)\} + o_p(1)O_p(n^{-1/2}),
\end{align*}
where $\twid\theta$ is between $\thx$ and $\theta_0$; by Lemma \ref{lem:fisher}, we justify that the derivative is $o_p(1)$. 

Combining this derivation along with the fact that $I^{-1}(\thx)= I^{-1}(\theta_0)+o_p(1)$ by the continuous mapping theorem, we have the following equation:

  \begin{equation}\label{eq:score1}
    \thz =\thx + I^{-1}(\theta_0) n^{-1} \sum_{i=1}^n S\{\theta_0,X_{\theta_0}(\omega_i)\} + o_p(n^{-1/2}).
\end{equation}

Using the same techniques, we do an expansion for $\thy$ about $\theta^*=2\thx-\thz$:
\begin{align}
    \thy &= \theta^* + I^{-1}(\theta^*) n^{-1} \sum_{i=1}^n S\{\theta^*, X_{\theta^*} (\omega_i)\} + o_p(n^{-1/2})\\
    &= \theta^* + I^{-1}(\theta_0) n^{-1} \sum_{i=1}^n S\{\theta_0, X_{\theta_0}(\omega_i)\} + o_p(n^{-1/2})\label{eq:substitute}\\
    &=\theta^* + (\thz-\thx) + o_p(n^{-1/2})\label{eq:combine}\\
    &=\thx+o_p(n^{-1/2}),\label{eq:final}\\
\nonumber
\end{align}
where line \eqref{eq:substitute} is a similar expansion as used for equation \eqref{eq:score0}, in line \eqref{eq:combine} we substituted the expression from \eqref{eq:score1}, and line \eqref{eq:final}  
uses the fact that as $n\rightarrow \infty$, $\theta^*=2\thx-\thz$ with probability tending to one. Indeed, since $2\thx-\thz$ is a consistent estimator of $\theta_0$, we have that as $n\rightarrow \infty$, $ P(2\thx-\thz\in \Theta)\geq P\{2\thx-\thz \in B(\theta_0)\}\rightarrow 1$.
\end{proof}

  %%%%%%%%%%%%%%%%%%%%%%%%%%%%%%%%%%%%%%%%%%%%%%%%%%%%%%%%%%%%%%%%%%%%%
%%%   PROOF OF CONDITIONAL 
%%%%%%%%%%%%%%%%%%%%%%%%%%%%%%%%%%%%%%%%%%%%%%%%%%%%%%%%%%%%%%%%%%%%%
%\lemconditional*
\begin{proof}[Proof of Lemma 1]
For a fixed $\theta \in \Theta$, for $\omega\sim P$, the random variable $Y = X_{\theta}(\omega)$ is distributed with probability measure $P X_{\theta}^{-1}$: for any measurable set $E$, $P(Y\in E) = P X_{\theta}^{-1}(E)$. We denote by $P^n_\Omega$ the joint probability measure on $\Omega^n$, and $(P X_{\theta}^{-1})^n$ the joint probability measure on $\RR^{d\times n}$.%$\mscr X^n$.

Given $\theta^* \in \Theta$, our goal is to derive the probability distribution of the random variables $X_{\theta^*}(\omega_1),\ldots,$ $X_{\theta^*}(\omega_n)$ conditioned on % $\thx = \hat\theta$ and $\theta^*(\ul \omega) = \theta^*$. 
%Conditioning on $\{\hat\theta(\ul X)=\hat\theta,\theta^*(\ul \omega)=\theta^*\}$ would be equivalent to conditioning on t
the event that $\{\omega_1,\ldots,\omega_n\mid \hat\theta\{X_{\theta^*}(\omega_i)\}= \hat\vartheta\}$. % since this is the only relation between $\omega_1,\ldots, \omega_n$ and $(\hat\theta(\ul X), \theta^*(\ul \omega))$. 
However, this event may have zero probability.  Instead, we will condition on $S_{\hat\vartheta,\theta^*}^\delta  = \{\omega_1,\ldots, \omega_n\mid \hat\vartheta\{X_{\theta^*}(\omega)\} \in B_\delta(\hat\vartheta)\}$, where $B_\delta(\hat\vartheta) = \{\theta  \mid \lVert \hat\vartheta - \theta\rVert\leq \delta\}$, which has positive probability. At the end, we will take the limit as $\delta\rightarrow 0$ to derive the desired distribution.
 
 Let $E\subset \RR^{d\times n}$% \mscr X^n$
 be a measurable set. Then 
 \begin{align*}
     &P\{X_{\theta^*}(\omega_1),\ldots, X_{\theta^*}(\omega_n) \in E \mid \omega_1,\ldots, \omega_n \in S_{\theta^*,\hat\vartheta}^\delta\}\\
     &=P(\omega_1,\ldots, \omega_n \in X^{-1}_{\theta^*} E\mid \omega_1,\ldots, \omega_n \in S^{\delta}_{\theta^*,\hat\vartheta})\\
     &= \frac{P^n( X^{-1}_{\theta^*}E \cap S_{\theta^*,\hat\vartheta}^\delta)}{P^n (S_{\theta^*,\hat\vartheta}^\delta)}\\
     &= \frac{(P X_{\theta^*}^{-1})^n (E\cap X_{\theta^*} S^{\delta}_{\theta^*,\hat\vartheta})}{(P X_{\theta^*}^{-1})^n(X_{\theta^*} S_{\theta^*,\hat\vartheta}^\delta)},
 \end{align*}
 where we used the definition of conditional probability and the fact that $X_{\theta^*}^{-1} X_{\theta^*} S_{\theta^*,\hat\vartheta}^\delta = S_{\theta^*,\hat\vartheta}^\delta$.
 
 This last expression shows that $X_{\theta^*}(\omega_1),\ldots, X_{\theta^*}(\omega_n)$ conditioned on $\ul \omega \in S_{\theta^*,\hat\vartheta}^\delta$ is distributed as $f^n_{\theta^*}\{y_1,\ldots, y_n \mid \hat\theta(\ul y) \in B_\delta(\hat\vartheta)\}$. 
  This derivation is valid for all $\delta>0$. Taking the limit as $\delta\rightarrow 0$ gives the desired formula:
  \[Y^{\theta^*}_1,\ldots, Y^{\theta^*}_{n} \Big | \hat\theta(\ul Y^{\theta^*})=\hat\theta(\ul X) \sim f_{\theta^*}^n\{y_1,\ldots, y_n \mid \hat\theta(\ul y) = \hat\theta(\ul X)\}.\]
 % Taking the limit as $\delta\rightarrow 0$ gives the desired formula: 
 % \[Y_1,\ldots, Y_n \mid \{\hat\theta(\ul X) = \hat\theta, \theta^*(\ul \omega) = \theta^*\}\sim f^n_{\theta^*}(y_1,\ldots, y_n \mid \hat\theta(\ul y) = \hat\vartheta).\]  
\end{proof}
%\hrule

 % \thmklGeneral*
  \begin{proof}  [Proof of Theorem 3]
  While the distributions $g$ depend on $n$, we will suppress this dependence for notational simplicity. We can then express the desired KL divergence as follows: 

    First, by the data processing inequality, we can add in the random variable $\hat\theta(\ul X) = \hat\theta(\ul Y)$ to get an upper bound on the KL divergence. We then have closed formulas for the joint distributions $\{X_1,\ldots,X_n,\hat\theta(\ul X)\}$ and $\{Y_1,\ldots, Y_n, \hat\theta(\ul X)\}$.
    \begingroup
\allowdisplaybreaks
  \begin{align}
  \mathrm{KL}&\left(X_1,\ldots,X_n\middle|\middle| Y_1,\ldots, Y_n\right)\\
  &\leq 
  \mathrm{KL}\left\{X_1,\ldots,X_n,\hat\theta(\ul X)\middle|\middle| Y_1,\ldots, Y_n,\hat\theta(\ul X)\right\}\\
  &=\mathrm{KL}\left[f_\theta^n\{\ul x\mid \hat\theta(\ul x)\}g_{\theta}\{\hat\theta(x)\} \middle|\middle| f_{\theta_n}^n\{\ul x\mid \hat\theta(\ul x)\}g_{\theta}\{\hat\theta(\ul x)\}\right]\\
    &=\EE_{\hat\vartheta \sim g(\cdot\mid \ul X)}\EE_{\ul X\sim f_\theta} \log \left\{ \frac{f_\theta(\ul X\mid\hat\vartheta) g_\theta(\hat\vartheta) }
    {f_{\theta_n}(\ul X\mid \hat\vartheta) g_\theta(\hat\vartheta) } \right\}\label{eq:kl2}\\
    &=\EE_{\hat\vartheta \sim g(\cdot\mid \ul X)}\EE_{\ul X\sim f_\theta} \log \left\{ \frac{f_\theta (\ul X)g(\hat\vartheta\mid \ul X) }
    {f_{\theta_n}(\ul X\mid \hat\vartheta) g_\theta(\hat\vartheta)}\right\}\label{eq:kl3}\\
    &=\EE_{\hat\vartheta \sim g(\cdot\mid \ul X)}\EE_{\ul X\sim f_\theta} \log \left[\frac{f_\theta(\ul X) g(\hat\vartheta\mid \ul X)}{\{f_{\theta_n}(\ul X) g(\hat\vartheta\mid \ul X)/g_{\theta_n}(\hat\vartheta)\}  g_\theta(\hat\vartheta)}\right]\label{eq:kl4}\\
         &=\EE_{\hat\vartheta \sim g(\cdot\mid \ul X)}\EE_{\ul X\sim f_\theta} \log \left\{\frac{f_{\theta}(\ul X) {g(\hat\vartheta\mid \ul X)}}{f_{\theta_n}(\ul X){g(\hat\vartheta\mid \ul X)}}\right\}\\
      &\phantom{=}+ \EE_{\hat\vartheta \sim g(\cdot\mid \ul X)}\EE_{\ul X \sim f_{\theta}} \log \left\{\frac{g_{\theta_n}(\hat\vartheta)}{g_{\theta}(\hat\vartheta)}\right\}\label{eq:kl5}\\
      &=-\EE_{\ul X\sim f_\theta} \log \left\{\frac{f_{\theta_n}(\ul X) }{f_{\theta}(\ul X)}\right\} + \EE_{\hat\vartheta \sim g_\theta} \log \left\{\frac{g_{\theta_n}(\hat\vartheta)}{g_{\theta}(\hat\vartheta)}\right\}\label{eq:kl6},
  \end{align}
  \endgroup
 
 where line \eqref{eq:kl2} simply applies the definition of KL divergence, and line \eqref{eq:kl4} uses the definition of conditional distribution.
  
At this point, we need to compute the two expectations of line \eqref{eq:kl6}, and show that everything cancels except for an $o_p(1)$ term. 

We write $\ell(\theta\mid \ul x) = \sum_{i=1}^n \log f_\theta(x_i)$. Using our assumptions, we can expand $\ell(\theta_n\mid \ul x)$:
\begin{align*}
    \ell(\theta_n\mid \ul x) &= \ell(\theta\mid \ul x) + (\theta_n - \theta)^\top \nabla \ell(\theta\mid \ul x) + \frac 12 (\theta_n - \theta)^\top \nabla^2 \ell(\theta\mid \ul x) (\theta_n- \theta)\\
    &\phantom{=}+\frac{1}{6} \xi^*\sum_{i,j,k} (\theta_n-\theta)_i(\theta_n-\theta)_j(\theta_n-\theta)_k \sum_{s=1}^ng_{ijk}(x_s),
    \end{align*}
    where $|\xi^*|\leq1$ and $g_{ijk}(x)$ is an upper bound for $\left|\frac{\partial^3 \ell(\theta\mid \ul x)}{\partial \theta_i\theta_j\theta_k}\right|$ for a ball about $\theta$, which exists by (R3). These expansions are based on those from \citet{serfling2009approximation}. 
Applying $\EE_{\ul X \sim f_\theta}$ to this derivation gives 
\begin{equation}\label{eq:klF}
\begin{split}
    \EE_{\ul X\sim f_\theta} \log \left\{ \frac{f_{\theta_n}(\ul X)}{f_\theta(\ul X)}\right\}
    &= 0-\frac {n}{2} (\theta_n - \theta)^\top I(\theta)(\theta_n - \theta) \\
    &\phantom{=}+ O(1)\frac{n}{6} \sum_{i,j,k} \{\EE g_{i,j,k}(x)\} (\theta_n-\theta)_i(\theta_n- \theta)_j (\theta_n - \theta)_k,\\
    &= \frac{-n}{2}  (\theta_n - \theta)^\top I(\theta)(\theta_n - \theta) + O(n) \lVert \theta_n-\theta\rVert^3
\end{split}
\end{equation}
where the first term is zero as the expected value of the score function is zero by (R3),
the second term uses \citet[Theorem 7.2.1]{lehmann2004elements} and (R3). The $O(1)$ factor in the third term is based on the fact that $|\xi^*|\leq 1$. Finally, note that $\sum_{i,j,k}\{\EE g_{i,j,k}(x)\}(\theta_n-\theta)_i(\theta_n-\theta)_j(\theta_n-\theta)_k\leq p^3 \sup_{i,j,k} \{\EE g_{i,j,k}(x)\} \lVert \theta_n-\theta\rVert^3_\infty = O(1) \lVert \theta_n-\theta\rVert^3$. Note that all norms are equivalent in $\RR^p$, so they can be interchanged up to a factor of $O(1)$.

Next, we will derive a similar formula for $\log g_{\theta^*}(\hat\vartheta)$:
\begin{equation}
\begin{split}
    \log g_{\theta_n}(\hat\vartheta)
    &=\log g_{\theta}(\hat\vartheta)
    +\nabla \log g_\theta(\hat\vartheta)(\theta_n - \theta)\\
    &\phantom{=}+\frac12 (\theta_n- \theta)^\top \nabla^2 \log g_\theta(\hat\vartheta) (\theta_n - \theta)\\
    &\phantom{=}+\frac n6\xi_2^* \sum_{i,j,k} (\theta_n-\theta)_i(\theta_n-\theta)_j(\theta_n-\theta)_k  G_{i,j,k}(\hat\vartheta),
    \end{split}
    \label{eq:expansionG}
\end{equation}
where $|\xi_2^*|\leq 1$. In order to apply the expectation $\EE_{\hat\vartheta \sim \theta}$ to this equation, we will first show $\EE_{\hat\vartheta\sim \theta} \nabla \log g_\theta(\hat\vartheta)=0$ and $\EE_{\hat\vartheta\sim \theta} \nabla^2 \log g_\theta(\hat\vartheta)=-nI(\theta)+o(n)$.

\begin{align*}
    \left\{\EE_{\hat\theta \sim \theta} \nabla \log g_\theta(\hat\vartheta)\right\}_j
    &= \int \left\{\frac{\partial}{\partial \theta_j} \log g_\theta(\hat\vartheta)\right\} g_\theta(\hat\vartheta)\ d\hat\vartheta\\
    &= \int \frac{\partial}{\partial \theta_j} g_\theta(\hat\vartheta)\ d\hat\vartheta\\
    &=\int \frac{\partial}{\partial \theta_j} \int_x f_\theta(x) g(\hat\vartheta\mid x) \ dx\ d\hat\vartheta\\
    &= \int_{\hat\theta} \int_x \frac{\partial}{\partial \theta_j} f_\theta(x)  g(\hat\vartheta\mid x) \ dx\ d\hat\vartheta\\
    &=\frac{\partial}{\partial \theta_j} \int\int f_\theta(x) g(\hat\vartheta\mid x) \ dx\ d\hat\vartheta\\
    &=0,
\end{align*}
where we use the assumption (R3) that $\left|\frac{\partial}{\partial \theta_j}f_\theta(x)\right|$ is  bounded above by an integrable function,  (R5) that $g(\hat\vartheta\mid x)$ is bounded, and the dominated convergence theorem to interchange the derivative and the integral. 

Next we work on the second derivative:

\begin{align*}
    &\left(\EE_{\hat\vartheta \sim \theta} \nabla^2 \log g_\theta(\hat\vartheta)\right)_{j,k}\\
    &= \int \left\{\frac{\partial^2}{\partial \theta_j\partial \theta_k} \log g_\theta(\hat\vartheta)\right\} g_\theta(\hat\vartheta) \ d\hat\vartheta\\
    &=\int \frac{g_\theta(\hat\vartheta) \frac{\partial^2}{\partial \theta_j\partial\theta_k} g_\theta(\hat\vartheta) - \frac{\partial}{\partial \theta_j} g_\theta(\hat\vartheta) \left\{\frac{\partial}{\partial \theta_k} g_\theta(\hat\vartheta)\right\}^\top }{g_\theta^2(\hat\vartheta)} g_\theta(\hat\vartheta) \ d\hat\vartheta\\
    &=0-\EE_{\hat\vartheta\sim \theta}\left( \nabla \log g_\theta(\hat\vartheta) \nabla^\top \log g_\theta(\hat\vartheta)\right)_{j,k},\\
\end{align*}
where we used (R3) along with the dominated convergence theorem to set the first term equal to zero. We see that $\EE \nabla^2 \log g_\theta(\hat\vartheta) = -I_{\thx}(\theta)$, where $I_{\thx}$ represents the Fisher information of the random variable $\hat\theta(\ul X)\sim g$. It is our current goal to show that $I_{\thx}(\theta) = nI(\theta)+o(n)$, where $I(\theta)$ is the Fisher information for one sample $X\sim f_\theta$. First note that by the data processing inequality \citep{zamir1998proof}, $I_{\hat\theta(\ul X)}(\theta)\leq I_{X_1,\ldots, X_n}(\theta) = nI(\theta)$, where the inequality represents the positive-definite ordering of matrices. Next, we need to find a matching lower bound. By the Cram\'er Rao lower bound, we have that 
\[ \{I_{\hat\theta(\ul X)}(\theta)\}^{-1} \leq \var\{\hat\theta(\ul X)\}+o(1/n),\]
where $\var\{\thx\}$ is the covariance matrix of the random variable $\thx$, and we used the fact that $\thx$ is asymptotically unbiased. By the efficiency of $\hat\theta(\ul X)$, we have that 
\[\var\{\hat\theta(\ul X)\} = n^{-1} I^{-1}(\theta) + o(1/n).\]
We then have 
\begin{align*}
    I_{\hat\theta(\ul X)}(\theta) &\geq \left\{n^{-1} I^{-1}(\theta) + o(1/n)\right\}^{-1}\\
    &= n\{I^{-1}(\theta) + o(1)\}^{-1}\\
    &=n \{I(\theta) + o(1)\},
\end{align*}
where for the last equality, we use the following matrix identity:
\[(A+B)^{-1} = A^{-1} - A^{-1} B(A+B)^{-1},\]
where we set $A = I^{-1}(\theta)$ and $B=o(1)$.

Combining our results, we have that 

\[\EE_{\hat\vartheta \sim \theta} \nabla^2 \log g_\theta(\hat\vartheta) = -I_{\hat\theta(\ul X)}(\theta) = n\{-I(\theta) + o(1)\}.\]
Finally, applying the expectation to equation \eqref{eq:expansionG}, we have 
\begin{equation}\label{eq:g}
\begin{split}
\EE_{\hat\vartheta \sim \theta}\log \left\{\frac{g_{\theta_n}(\hat\vartheta)}{g_\theta(\hat\vartheta)}\right\}
&=0 - \frac{n}{2} (\theta_n - \theta) \{I(\theta) + o(1)\}(\theta_n-\theta)\\
&\phantom{=}+ O(1) \frac n6 \sum_{i,j,k} \{\EE G_{i,j,k}(\hat\vartheta)\} (\theta_n-\theta)_i(\theta_n-\theta)_j(\theta_n-\theta)_k\\
&=\frac{-n}{2}(\theta_n-\theta)^\top I(\theta)(\theta_n-\theta)+ o(n)\lVert \theta_n-\theta\rVert^2\\
&\phantom{=}+ O(n) \lVert \theta_n-\theta\rVert^3.
\end{split}
\end{equation}
Combining equations \eqref{eq:klF} and \eqref{eq:g}, we have 
\begin{align*}
\mathrm{KL}&\left(X_1,\ldots,X_n\middle|\middle| Y_1,\ldots, Y_n\right)\\
&\leq\mathrm{KL}\left[f_\theta^n\{\ul x\mid \hat\theta(\ul x)\}g_{\theta}\{\hat\theta(x)\} \middle|\middle| f_{\theta_n}^n\{\ul x\mid \hat\theta(\ul x)\}g_{\theta}\{\hat\theta(\ul x)\}\right]\\
    &=-\EE_{\ul X\sim f_\theta} \log \left\{\frac{f_{\theta_n}(\ul X) }{f_{\theta}(\ul X)}\right\}
      + \EE_{\hat\vartheta \sim g_\theta(\hat\vartheta)} \log \left\{\frac{g_{\theta_n}(\hat\vartheta)}{g_{\theta}(\hat\vartheta)}\right\}\\
      &=o(n)\lVert \theta_n-\theta\rVert^2 + O(n) \lVert \theta_n-\theta\rVert^3.
\end{align*}

\end{proof}
  %\hrule
  
  \section{Additional Simulation Results}
  				\begin{figure}[ht]
		\includegraphics[width=.48\textwidth]{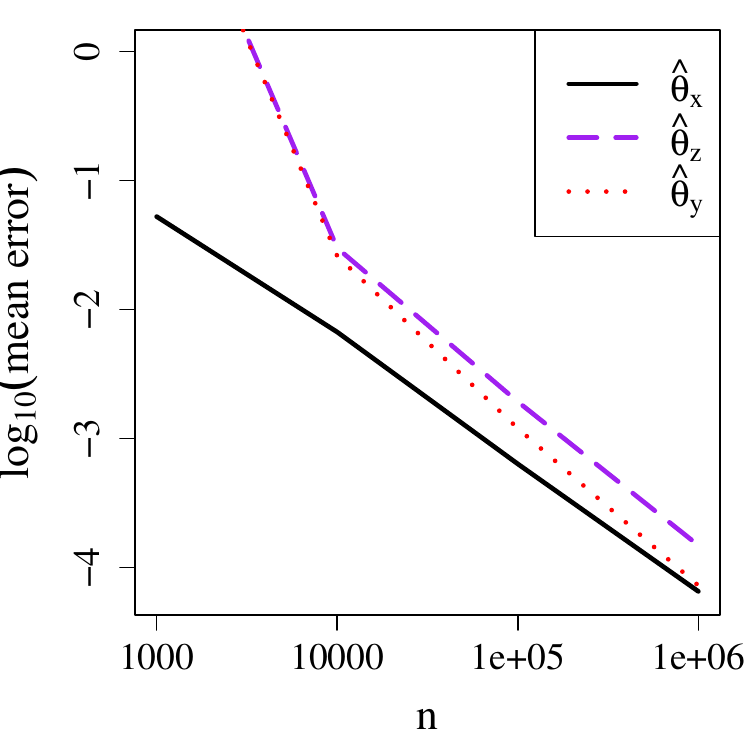}%}
			\includegraphics[width=.48\textwidth]{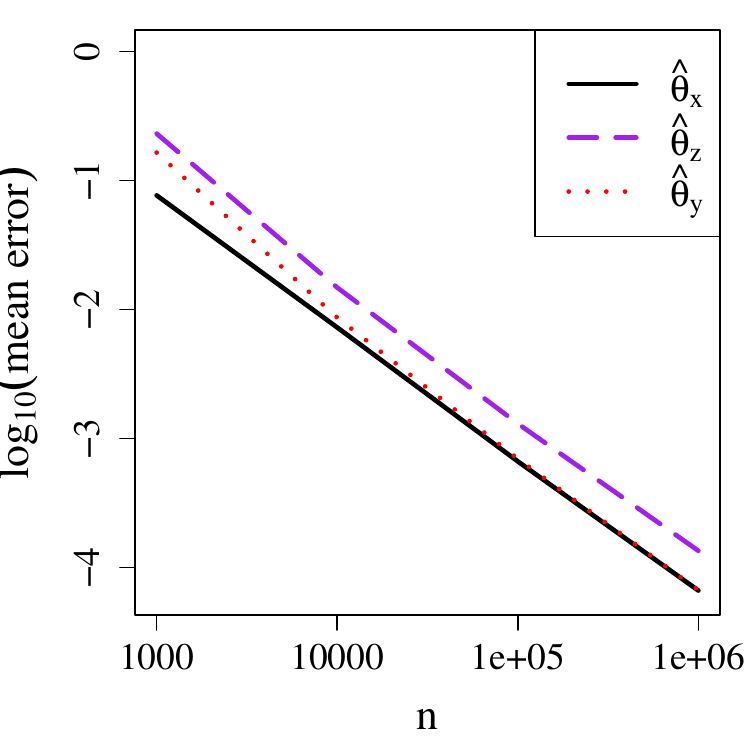}
		\includegraphics[width=.48\textwidth]{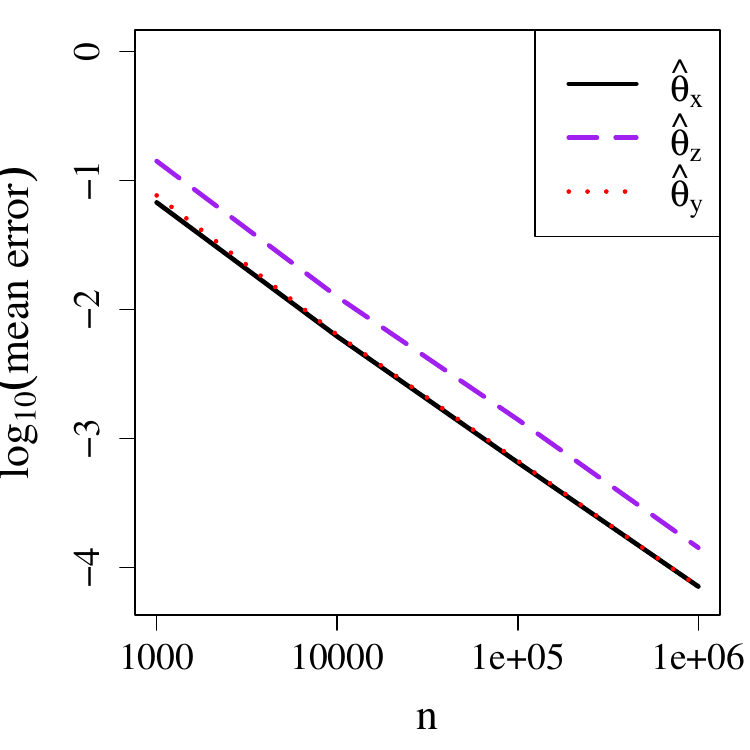}%}
			\includegraphics[width=.48\textwidth]{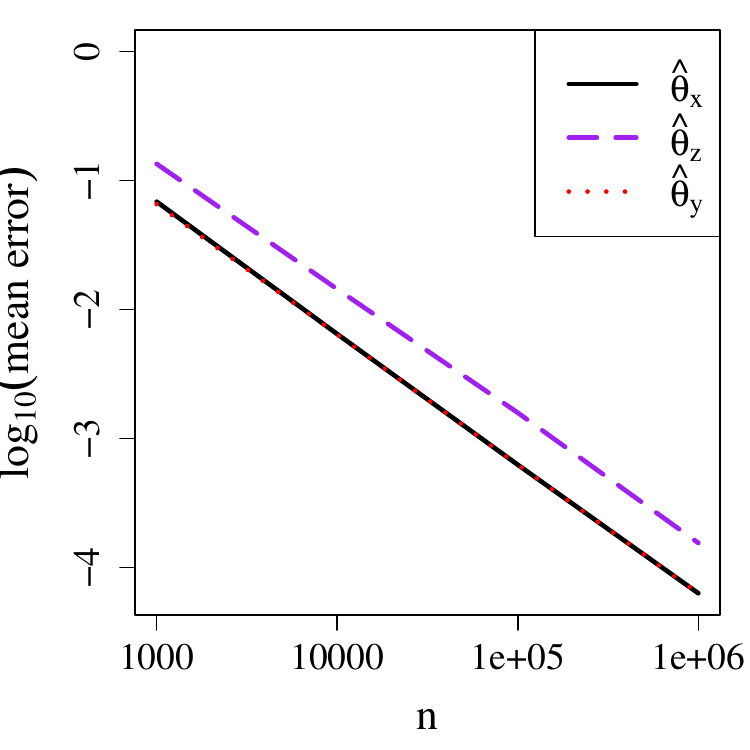}
			\caption{Additional simulations for Section 6.3. In normal reading order, $\ep=.5,2,4,\infty$. Note that Figure 1(b) in the main paper is for $\ep=1$}
			\label{fig:beta2}
		\end{figure}
\subsection{Differentially private beta synthetic data}
  In this section, we consider additional values of $\epsilon$ that are used in the Section 6.3 experiment on differentially private beta distributed synthetic data. All other simulation parameters are identical. We varied $\ep=.5,2,4,\infty$ (note that $\ep=1$ appears in Figure 1(b) in the main paper). In Figure \ref{fig:beta2}, we see that at all values of $\ep$, $\thy$ is very close to $\hat\theta_{DP}$. However, with smaller $\ep$ it requires a larger sample size before the performance of $\thy\approx \hat\theta_{DP}$ is similar to the MLE $\thx$. Note that even with $\ep=\infty$, the performance of $\thz$ does not approach that of $\thx$. 

  \subsection{DP two sample proportion test}
  In this section, we repeat the experiment of Section 6.4 with varying values of $\epsilon$. All other simulation parameters are the same. We varied $\ep=.5,2,4,10$ (note that $\ep=1$ appears in Figure 1(b) in the main paper). In Figure \ref{fig:prop_add1}, we plot the $p$-values of both the one-step and parametric bootstrap tests. We see that the parametric bootstrap $p$-values are very conservative for all values of $\epsilon$, whereas the one-step $p$-values are fairly well-calibrated, although sometimes slightly inflated. In Figure \ref{fig:prop_add2} we plot the power of the two tests for the different $\epsilon$ values. We see that the relative performance of one-step compared to parametric bootstrap is unchanged as we vary $\epsilon$.

		\begin{figure}[ht]
		\includegraphics[width=.48\textwidth]{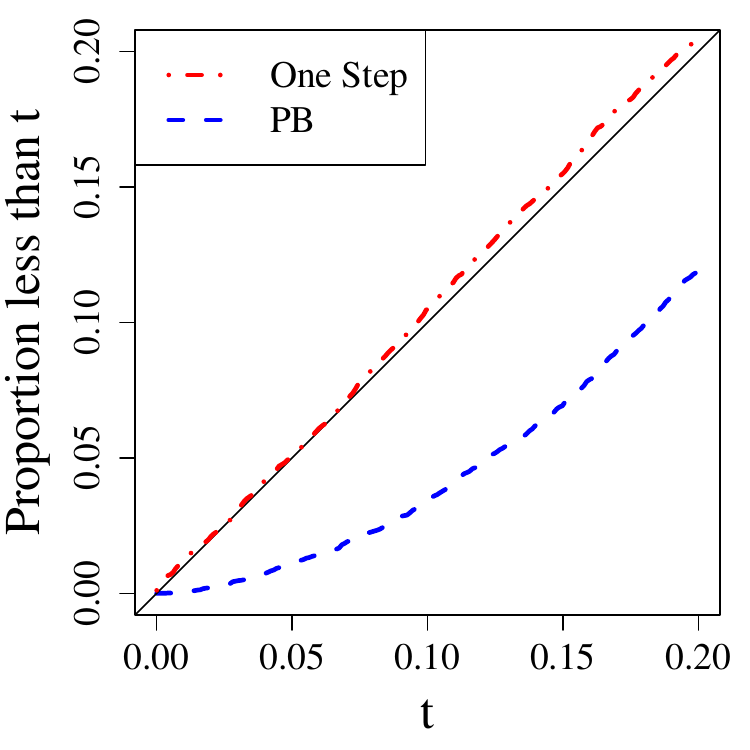}%}
			\includegraphics[width=.48\textwidth]{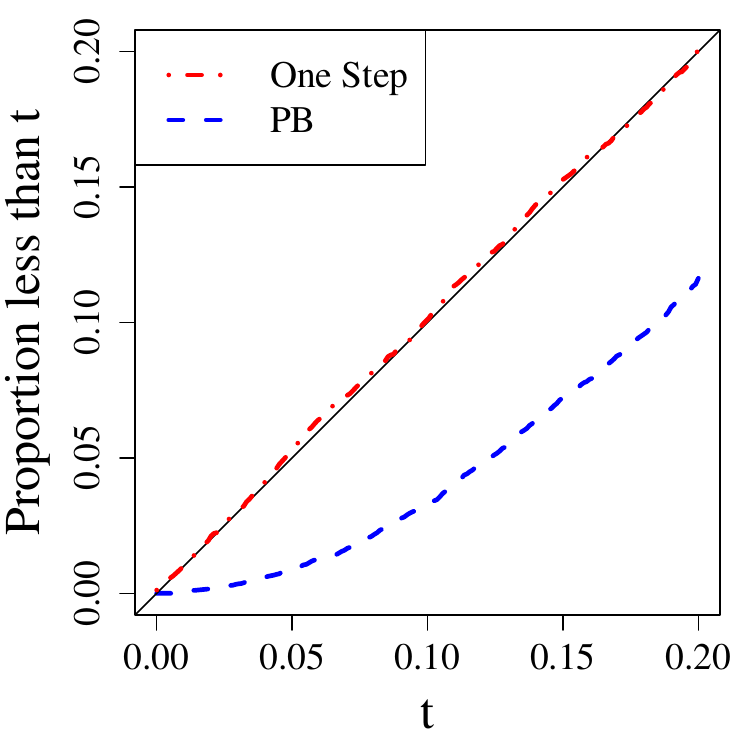}
		\includegraphics[width=.48\textwidth]{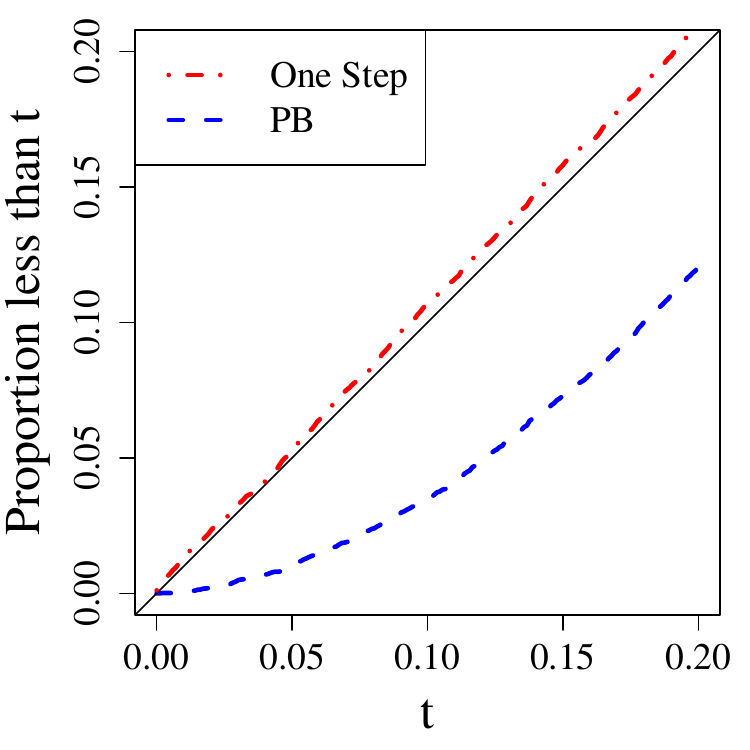}%}
			\includegraphics[width=.48\textwidth]{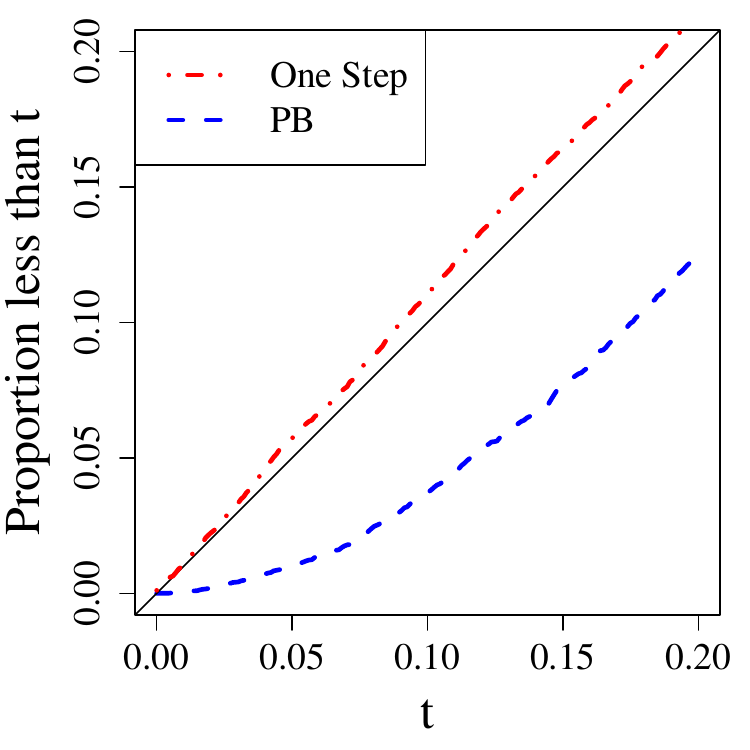}
			\caption{Additional simulations for Section 6.4. In normal reading order, $\ep=.5,2,4,10$. Note that Figure 2(a) in the main paper is for $\ep=1$}
			\label{fig:prop_add1}
		\end{figure}

		\begin{figure}[ht]
		\includegraphics[width=.48\textwidth]{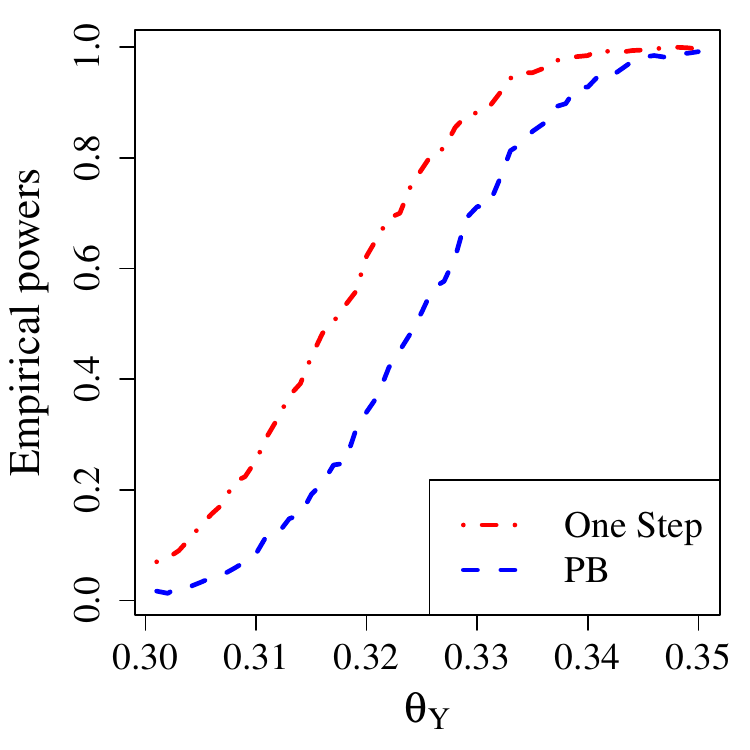}%}
			\includegraphics[width=.48\textwidth]{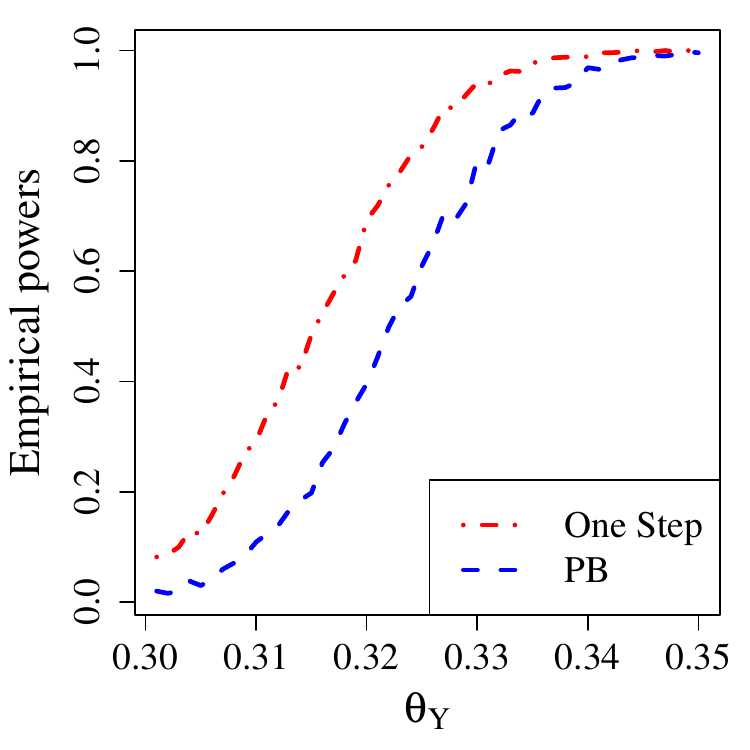}
		\includegraphics[width=.48\textwidth]{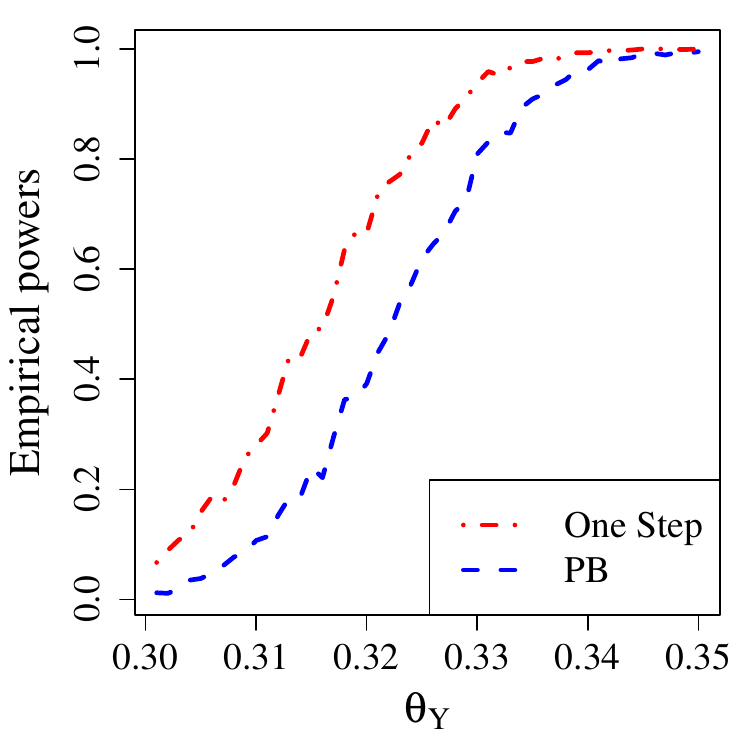}%}
			\includegraphics[width=.48\textwidth]{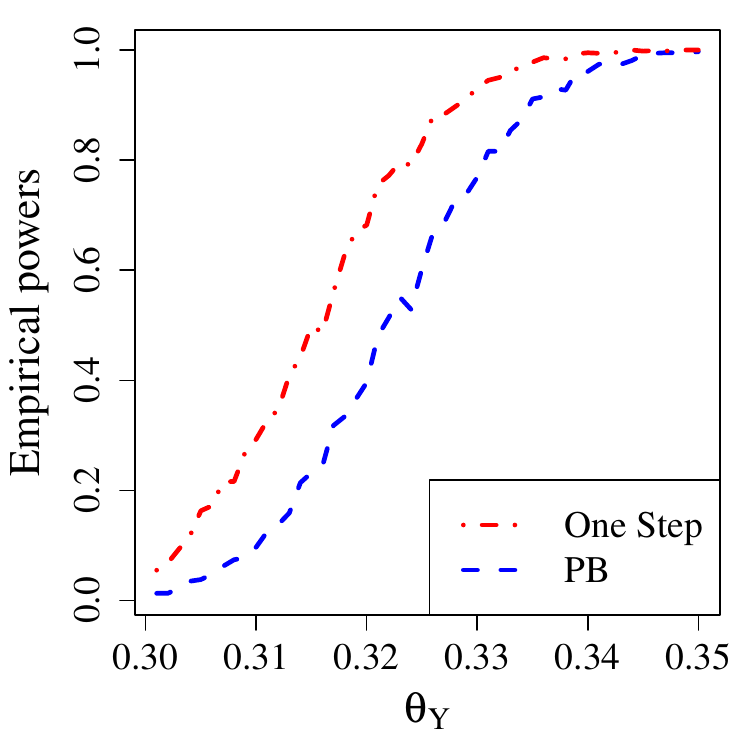}
			\caption{Additional simulations for Section 6.4. In normal reading order, $\ep=.5,2,4,10$. Note that Figure 2(b) in the main paper is for $\ep=1$}
			\label{fig:prop_add2}
		\end{figure}

% \vskip .3cm
%\centerline{(Received ???? 20??; accepted ???? 20??)}\par
\end{document}